\newcommand{\NN}{\mathbb{N}}
\newcommand{\RR}{\mathbb{R}}
\newcommand{\ZZ}{\mathbb{Z}}
\newtheorem{theorem}{Theorem}[section]
\newtheorem{lemma}[theorem]{Lemma}
\newtheorem{proposition}[theorem]{Proposition}
\newtheorem{corollary}[theorem]{Corollary}
\newtheorem{definition}[theorem]{Definition}
\newtheorem{example}[theorem]{Example}
\newtheorem{remark}[theorem]{Remark}
\DeclareMathOperator{\diam}{diam}
\DeclareMathOperator{\Vol}{Vol}
\newcommand{\spb}[1]{\smallskip}
\newcommand{\mpb}[1]{\medskip}
\newcommand{\bpb}[1]{\bigskip}
\newcommand{\p}{\partial}
\renewcommand{\a}{\alpha}
\renewcommand{\b}{\beta}
\newcommand{\e}{\varepsilon}
\renewcommand{\d}{\delta}
\newcommand{\D}{\Delta}
\newcommand{\g}{\gamma}
\newcommand{\G}{\Gamma}
\renewcommand{\l}{\lambda}
\newcommand{\s}{\sigma}
\begin{document}
\DeclareGraphicsExtensions{.jpg,.pdf,.mps,.png}

\title{Cheeger isoperimetric constant of Gromov hyperbolic manifolds and graphs}

\author[Alvaro Mart\'{\i}nez-P\'erez]{Alvaro Mart\'{\i}nez-P\'erez$^{(1)}$}
\address{ Facultad CC. Sociales de Talavera,
Avda. Real Fábrica de Seda, s/n. 45600 Talavera de la Reina, Toledo, Spain}
\email{alvaro.martinezperez@uclm.es}
\thanks{$^{(1)}$ Supported in part by a grant
from Ministerio de Econom{\'\i}a y Competitividad (MTM 2012-30719), Spain.
}

\author[Jos\'e M. Rodr{\'\i}guez]{Jos\'e M. Rodr{\'\i}guez$^{(2)}$}
\address{Departamento de Matem\'aticas, Universidad Carlos III de Madrid,
Avenida de la Universidad 30, 28911 Legan\'es, Madrid, Spain}
\email{jomaro@math.uc3m.es}
\thanks{$^{(2)}$ Supported in part by two grants
from Ministerio de Econom{\'\i}a y Competitividad (MTM 2013-46374-P and MTM 2015-69323-REDT), Spain, and a grant from CONACYT (FOMIX-CONACyT-UAGro 249818), M\'exico.}

\date{\today}


\begin{abstract}
In this paper we study the relationship of hyperbolicity and (Cheeger) isoperimetric inequality in the context of Riemannian manifolds and graphs.
We characterize the hyperbolic manifolds and graphs (with bounded local geometry) verifying this isoperimetric inequality, in terms of their Gromov boundary.
Furthermore, we characterize the trees with isoperimetric inequality (without any hypothesis).
As an application of our results, we obtain the solvability of the Dirichlet problem at infinity for these Riemannian manifolds and graphs,
and that the Martin boundary is homeomorphic to the Gromov boundary.
\end{abstract}

\maketitle{}

{\it Keywords:} Gromov hyperbolicity; Cheeger isoperimetric constant; bounded local geometry.

{\it 2010 AMS Subject Classification numbers:} Primary 53C21, 53C23; Secondary 58C40.

\section{Introduction}

In this paper we study the relationship of hyperbolicity and (Cheeger) isoperimetric inequality in the context of Riemannian manifolds and graphs
with bounded local geometry.
Cao proved in \cite{Cao} that hyperbolicity with an extra hypothesis on the Gromov boundary implies (Cheeger) isoperimetric inequality
(an extra hypothesis is necessary, since there exist hyperbolic graphs without isoperimetric inequality, as the Cayley graph of the group $\ZZ$).
Furthermore, Example \ref{exa1} and \cite[Section 4, Example (a)]{RT3} show that isoperimetric inequality does not imply hyperbolicity for graphs and manifolds, respectively.

It is natural to look for relations between hyperbolicity and Cheeger isoperimetric inequality,
since Gromov hyperbolicity can be defined in an alternative way by using a different kind of isoperimetric inequality \cite{ABCD}, \cite{G1}.

Isoperimetric inequalities are of interest in pure and applied mathematics (see, e.g., \cite{C2}, \cite{Po}).
There are close connections between isoperimetric inequality and some conformal invariants of Riemannian manifolds and graphs, namely
Poincar\'e-Sobolev inequalities,
the bottom of the spectrum of the Laplace-Beltrami operator, the exponent of convergence,
and the Hausdorff dimensions of the sets of both bounded geodesics and escaping geodesics in a negatively curved surface
(see \cite{BJ}, \cite[p.228]{Bu}, \cite{Ch}, \cite{FM1}, \cite{FM2}, \cite{FMP1}, \cite{FMP2}, \cite{FR1}, \cite{MRT}, \cite{P}, \cite[p.333]{S}).
The Cheeger isoperimetric inequality is closely related to the project of Ancona
on the space of positive harmonic functions of Gromov-hyperbolic manifolds and graphs (\cite{A1}, \cite{A2} and \cite{A3}).
In fact, in the study of the Laplace operator on a hyperbolic manifold or graph $X$,
Ancona obtained in these three last papers interesting results, under the additional assumption that the bottom of the spectrum of the Laplace spectrum $\l_1(X)$ is positive.
The well-known Cheeger inequality $\l_1(X) \ge \frac14 \,h(X)^2$, where $h(X)$ is the isoperimetric constant of $X$,
guarantees that $\l_1(X)>0$ when $h(X) > 0$ (see \cite{Bu1} for a converse inequality).
Hence, the results of this paper are useful in order to obtain these Ancona's results.

Given any Riemannian $n$-manifold $M$, the Cheeger isoperimetric constant of $M$ is defined as
\[h(M) = \inf_A \frac{\Vol_{n-1}(\partial A)}{\Vol_{n}(A)} ,\]
where $A$ ranges over all non-empty bounded open subsets of $M$,
and $\Vol_{k}(B)$ denotes the $k$-dimensional Riemannian volume of the set $B$.

Given any graph $\G=(V,E)=(V(\G),E(\G))$, let us consider the natural length metric $d_{\,\Gamma}$ where every edge has length 1.
For any graph $\Gamma$, any vertex $v\in V$ and any $k\in \mathbb{N}$, let $S(v,k):=\{w\in V \, | \, d_{\,\Gamma}(v,w)=k \}$.
As usual, we denote by $B(v,k)$ and $\bar{B}(v,k)$ the open and closed balls, respectively.

The combinatorial Cheeger isoperimetric constant of $\Gamma$ is defined to be
\[h(\Gamma) = \inf_A \frac{|\partial A|}{|A|} ,\]
where $A$ ranges over all non-empty finite subsets of vertices in $\Gamma$,
$\partial A = \{v \in \Gamma \, | \, d_{\,\Gamma}(v,A) = 1\}$ and $|A|$ denotes the cardinality of $A$.

We say that a Riemannian manifold or graph $X$ satisfies the (Cheeger) \emph{isoperimetric inequality} if $h(X)>0$, since in this case
$$
\Vol_{n}(A) \le h(X)^{-1} \Vol_{n-1}(\partial A) ,
$$
for every bounded open set $A \subseteq X$ if $X$ is a Riemannian $n$-manifold, and
$$
|A| \le h(X)^{-1} |\partial A| ,
$$
for every finite set $A \subseteq V(X)$ if $X$ is a graph.

Along the paper, we just consider manifolds and graphs $X$ which are connected.
This is not a loss of generality, since if $X$ has connected components $\{X_j\}$, then $h(X)=\inf_j h(X_j)$.

Let $(X,d_X)$ and $(Y,d_Y)$  be two metric spaces. A map $f: X\longrightarrow Y$ is said to be
an $(\alpha, \beta)$-\emph{quasi-isometric embedding}, with constants $\alpha\geq 1,\
\beta\geq 0$, if for every $x, y\in X$:
$$
\alpha^{-1}d_X(x,y)-\beta\leq d_{\,Y}(f(x),f(y))\leq \alpha \, d_X(x,y)+\beta.
$$
The function $f$ is $\varepsilon$-\emph{full} if
for each $y \in Y$ there exists $x\in X$ with $d_Y(f(x),y)\leq \varepsilon$.

A map $f: X\longrightarrow Y$ is said to be
a \emph{quasi-isometry}, if there exist constants $\alpha\geq 1,\
\beta,\varepsilon \geq 0$ such that $f$ is an $\varepsilon$-full
$(\alpha, \beta)$-quasi-isometric embedding.
Two metric spaces $X$ and $Y$ are \emph{quasi-isometric} if there exists
a quasi-isometry $f:X\longrightarrow Y$.
One can check that to be quasi-isometric is an equivalence relation.

A graph $\Gamma$ is said to be $\mu$-\emph{uniform} if each vertex $p$ of $V$ has at most $\mu$ neighbors, i.e.,
\[\sup\big\{|N(p)| \,  \big| \,\, p\in V(\G)\big\}\leq \mu.  \]
If a graph $\Gamma$ is $\mu$-uniform for some constant $\mu$ we say that $\Gamma$ is \emph{uniform}
or that it has \emph{bounded local geometry}.

A Riemannian $n$-manifold $M$ has \emph{bounded local geometry} if there exist positive constants $r,c,$ such that for every $x \in M$ there is a diffeomorphism $F : B(x,r) \rightarrow \RR^n$
with
$$
\frac{1}{c}\,d(x_1,x_2) \le \| F(x_1) - F(x_2) \| \le c\,d(x_1,x_2)
$$
for every $x_1,x_2 \in B(x,r)$.

The \emph{injectivity radius} inj$(x)$ of \emph{$x\in M$} is defined as the supremum of those $r>0$ such that $B(x,r)$ is simply connected or, equivalently,
as half the infimum of the lengths of the (homotopically non-trivial) loops based at $x$.
The \emph{injectivity radius} inj$(M)$ \emph{of $M$} is the infimum over $x\in M$ of inj$(x)$.

\begin{remark} \label{r:Ricci}
If $M$ has positive injectivity radius and a lower bound on its Ricci curvature, then $M$ has bounded local geometry \cite{AC}.
\end{remark}

A celebrated theorem of Kanai in \cite{K} states that quasi-isometries preserve isoperimetric inequalities
between Riemannian manifolds and graphs with bounded local geometry.
This result also holds with weaker hypotheses in the context of Riemann surfaces \cite{CGPR}, \cite{GPPRT}.
The following lemma is a particular case of Lemma 4.2 in \cite{K}:

\begin{lemma} \label{l:quasi-isometric} Let $\Gamma, \Gamma'$ be a pair of uniform graphs.
If $\Gamma, \Gamma'$ are quasi-isometric, then $h(\Gamma)>0$ if and only if $h(\Gamma')>0$.
\end{lemma}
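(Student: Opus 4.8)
The plan is to establish the quasi-isometry invariance of the property $h>0$ by transferring the isoperimetric inequality across the quasi-isometry, using bounded local geometry to turn geometric inclusions into combinatorial counting. Since being quasi-isometric is an equivalence relation, it suffices to prove one implication, say $h(\Gamma')>0 \Rightarrow h(\Gamma)>0$; the converse then follows by running the same argument on a quasi-inverse, which is again a quasi-isometry between uniform graphs. So fix an $\varepsilon$-full $(\alpha,\beta)$-quasi-isometric embedding $f\colon\Gamma\to\Gamma'$; after moving each image point to a nearest vertex (which only enlarges $\beta$ and $\varepsilon$ by a bounded amount) we may assume $f(V(\Gamma))\subseteq V(\Gamma')$.

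First I would record the two elementary consequences of bounded local geometry that drive the proof. (i) The fibers of $f$ are uniformly small: if $f(x)=f(y)$ then the lower quasi-isometry bound forces $d_\Gamma(x,y)\le\alpha\beta$, so $f^{-1}(w')$ sits in a $\Gamma$-ball of radius $\alpha\beta$ and hence $|f^{-1}(w')|\le C_1$ for a constant $C_1=C_1(\mu,\alpha,\beta)$. (ii) In either graph a ball of fixed radius $R$ has at most $C(R)$ vertices, with $C(R)$ depending only on $R$ and the degree bound; consequently $|\bar B(S,R)|\le C(R)\,|S|$ for every finite vertex set $S$. From (i) I immediately get the volume comparison $|A|\le |f^{-1}(f(A))|\le C_1\,|f(A)|$ for every finite $A\subseteq V(\Gamma)$.

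Now given a nonempty finite $A\subseteq V(\Gamma)$, set $\Omega:=f(A)$ and fatten it to $\Omega^+:=\bar B_{\Gamma'}(\Omega,\varepsilon)$, a finite subset of $V(\Gamma')$. The volume comparison gives $|A|\le C_1\,|\Omega|\le C_1\,|\Omega^+|$, and the hypothesis $h(\Gamma')>0$ yields $|\Omega^+|\le h(\Gamma')^{-1}|\partial\Omega^+|$. Everything therefore reduces to bounding $|\partial\Omega^+|$ by a multiple of $|\partial A|$, and this is the crux. The idea is that a vertex $w'\in\partial\Omega^+$ lies at $\Gamma'$-distance strictly between $\varepsilon$ and $\varepsilon+1$ from $\Omega$; using $\varepsilon$-fullness I pick $y\in V(\Gamma)$ with $d_{\Gamma'}(f(y),w')\le\varepsilon$, so that $0<d_{\Gamma'}(f(y),f(A))\le 2\varepsilon+1$. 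The strict positivity gives $y\notin A$, while the upper bound together with the lower quasi-isometry inequality gives $d_\Gamma(y,A)\le \alpha(2\varepsilon+1+\beta)=:R_0$. A geodesic from $y$ to $A$ then places $y$ within $\Gamma$-distance $R_0$ of $\partial A$, so that $w'\in \bar B_{\Gamma'}\big(f(\bar B_\Gamma(\partial A,R_0)),\varepsilon\big)$. Hence $\partial\Omega^+\subseteq \bar B_{\Gamma'}\big(f(\bar B_\Gamma(\partial A,R_0)),\varepsilon\big)$, and applying the counting estimate (ii) in both graphs gives $|\partial\Omega^+|\le C(\varepsilon)\,|\bar B_\Gamma(\partial A,R_0)|\le C(\varepsilon)\,C(R_0)\,|\partial A|$.

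Combining the estimates above yields $|A|\le C_1\,h(\Gamma')^{-1}C(\varepsilon)C(R_0)\,|\partial A|$ with a constant independent of $A$, whence $h(\Gamma)>0$. I expect the genuine difficulty to be exactly the boundary comparison of the third paragraph: the fiber and volume counts are routine for bounded-degree graphs, but controlling $\partial\Omega^+$ requires the careful interplay of fullness (to produce a preimage $y$ near each boundary vertex $w'$), the lower quasi-isometric bound (to keep $y$ near $A$), and the ball-size estimates (to convert the set inclusion into a cardinality inequality). The two points I would check most carefully are that $y$ genuinely lands near $\partial A$ rather than merely near $A$, and that all constants remain uniform over every finite $A$.
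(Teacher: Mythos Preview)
Your argument is correct. The paper, however, does not give its own proof of this lemma at all: it simply records the statement and cites it as a particular case of Lemma~4.2 in Kanai's paper \cite{K}. What you have written is, in effect, a self-contained specialization of Kanai's argument to the graph setting: push a finite set $A$ forward, thicken by the fullness constant, apply the isoperimetric inequality in $\Gamma'$, and then pull the boundary back using the lower quasi-isometry bound together with the degree bound. All the constants you use ($C_1$, $C(\varepsilon)$, $C(R_0)$) are exactly the ball-volume bounds that uniformity provides, so the proof is complete and the worries you flag at the end are already handled by your own argument (the geodesic from $y\notin A$ to $a\in A$ necessarily meets $\partial A$ at some intermediate vertex, at distance $\le R_0-1$ from $y$; and none of the constants depends on $A$). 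The only cosmetic slip is that $d_{\Gamma'}(w',\Omega)$ can equal $\varepsilon+1$ rather than being strictly below it, but your subsequent estimate $d_{\Gamma'}(f(y),f(A))\le 2\varepsilon+1$ already accommodates this.
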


Let $X$ be a metric space. Fix a base point $o\in X$ and for
$x,x'\in X$ let
$$(x|x')_o=\frac{1}{2}\big(d(x,o)+d(x',o)-d(x,x')\big).$$
The number $(x|x')_o$ is non-negative and it is called the \emph{Gromov product} of $x,x'$ with respect to $o$.

\begin{definition} A metric space $X$ is \emph{(Gromov)
hyperbolic} if it satisfies the $\delta$-inequality
\[(x|y)_o\geq \min\{(x|z)_o,(z|y)_o\}-\delta\] for some $\delta\geq
0$, for every base point $o\in X$ and all $x,y,z \in X$.
\end{definition}

We denote by $\d(X)$ the sharp hyperbolicity constant of $X$:
$$
\d(X)= \sup \Big\{ \min\{(x|z)_o,(z|y)_o\} - (x|y)_o \,\big| \;\, x,y,z,o \in X \Big\}.
$$
Hence, $X$ is hyperbolic if and only if $\d(X)<\infty$.

The theory of Gromov hyperbolic spaces was introduced by M. Gromov for
the study of finitely generated groups (see \cite{G1}).
The concept of Gromov hyperbolicity grasps the essence of negatively curved
spaces like the classical hyperbolic space, Riemannian manifolds of
negative sectional curvature bounded away from $0$, and of discrete spaces like trees
and the Cayley graphs of many finitely generated groups. It is remarkable
that a simple concept leads to such a rich
general theory (see \cite{ABCD, GH, G1}).
This theory has been developed from a geometric point of view to the extent of making
hyperbolic spaces an important class of metric spaces to be studied on their
own (see, e.g., \cite{BH,BBI,BS,GH,V}). In the last years, Gromov hyperbolicity
has been intensely studied in graphs (see, e.g.,
\cite{BRS,BRST,BHB1,K21,K22,M,RSVV,Sha1,Sha2,Sha3,WZ} and the references therein).
Gromov hyperbolicity, specially in graphs, has found applications in different areas such as phylogenetics (see
\cite{DHHKMW,DMT}), real networks (see \cite{AAD,ASM,CMN,CoCoLa,KPKVB,MoSoVi}) or the secure transmission
of information and virus propagation on networks (see \cite{K21,K22}).

We want to remark that the main examples of hyperbolic graphs are the trees.
In fact, the hyperbolicity constant of a metric space can be viewed as a measure of
how ``tree-like'' the space is, since those spaces $X$ with $\delta(X) = 0$ are precisely the metric trees.
This is an interesting subject since, in
many applications, one finds that the borderline between tractable and intractable
cases may be the tree-like degree of the structure to be dealt with
(see, e.g., \cite{CYY}).

\smallskip

In this paper we characterize the trees with isoperimetric inequality in Theorem \ref{t:iitreefinal}, in terms of their Gromov boundary.
The main idea is that a geometric object is usually much simpler near infinity: if one looks at it on the boundary, then its essential features are captured whereas all the background noise faints.
If we consider hyperbolic graphs instead of trees, Theorem \ref{t:iigraph} characterizes the uniform hyperbolic graphs with isoperimetric inequality.
In Theorem \ref{th: isoperimetric_manifolds}, we extend this result for a large class of Riemannian manifolds.
Theorems \ref{t:iigraph} and \ref{th: isoperimetric_manifolds} generalize Cao's Theorem (see Remark \ref{r:Cao}).
Corollaries \ref{c1} and \ref{c2} show that for many manifolds and graphs $X$,
the Dirichlet problem at infinity is solvable on $X$ and the Martin boundary of $X$ is homeomorphic to its Gromov boundary.
Finally, in Section 6 we give another sufficient condition for isoperimetric inequality by using local information of the graph.
It allows to show that there exists a large class of non-hyperbolic graphs with positive Cheeger isoperimetric constant.

\smallskip

Amenability is an important property in the context of geometric group theory. It is well-known that a finitely generated group is amenable if and only its Cayley graph with respect to any finite generating set is amenable.
M. Gromov proved in \cite{G1} that a finitely generated infinite hyperbolic group is amenable if and only if it is virtually cyclic, in which case its boundary contains two points.  There are many properties which are proved to be equivalent to amenability in different contexts. In the context of connected uniform graphs see, for example, \cite{Kap}. As it was mentioned by I. Kapovich in this work (see the references therein), non-amenable graphs play an important role in the study of various probabilistic phenomena, such as random walks, harmonic
analysis, Brownian motion and percolations, on graphs and manifolds.  As it was proved in \cite[Theorem 51]{CGH}, if $\Gamma$ is a connected uniform graph, then $h(\G)>0$ if and only if $\G$ is non-amenable. Therefore, the characterization in Theorem \ref{t:iigraph} can be seen as a characterization of amenability on uniform hyperbolic graphs in terms of the boundary.

\section{Some previous results}

Given a real-valued function $f$ on the vertex set $V$ of any graph $\Gamma$, a discrete version of the gradient can be defined as
\[\nabla_{\!xy} f=f(y)-f(x)\]
for every ordered pair of vertices $x,y$ with $xy \in E$, and $\nabla_{\!xy} f=0$ if $xy \notin E$.

The discrete version of the Laplacian can be also defined as follows:
\[(\Delta{f})(x)=\frac{1}{|N(x)|}\sum_{y\in N(x)}\Big(f(y)-f(x)\Big)=\frac{1}{|N(x)|}\sum_{y\in N(x)}\nabla_{\!xy}f.\]

Let us recall the discrete version of Green's formula (see \cite {CGY}).
If $f$ and $g$ are functions on $V$ and one of them has a finite support, then
\[\sum_{x\in V} (\Delta f)(x) g(x)|N(x)|=-\frac{1}{2}\!\sum_{x,y\in V}(\nabla_{\!xy}f)(\nabla_{\!xy}g).\]

Consider $g=\aleph_A$, the characteristic function of $A$:
\[\aleph_A(x) =\left\{ \begin{tabular}{l}
$1, \quad \mbox{if } x \in A,$\\
$0, \quad \mbox{if } x \notin A.$\end{tabular}
\right.\]

Then, Green's formula implies the following, which appears as Proposition 2.3 in \cite{Cao}.
Since there is a slight difference in the notation we include the proof.
Furthermore, the argument in this proof gives Corollary \ref{c:sufficient} below, which will be useful in the proof of Lemma \ref{l1:caract_complete}.

\begin{proposition}\label{p:sufficient} Let $\Gamma$ be a $\mu$-uniform graph. Suppose that there is a function $f$
defined on the vertex set $V$ and satisfying the following:
\begin{itemize}
	\item[(i)] $|\nabla_{\!xy} f|\le c_1$ for every $xy\in E$,
	\item[(ii)] $(\Delta f)(x) \ge c_2 > 0$ for every $x\in V$.
\end{itemize}	
Then $h(\Gamma) \ge \frac{c_2}{\mu c_1}> 0$.
\end{proposition}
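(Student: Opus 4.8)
The plan is to apply the discrete Green's formula recalled above with the given function $f$ and $g=\aleph_A$, for an arbitrary non-empty finite set $A\subseteq V$; since $A$ is finite, $\aleph_A$ has finite support and the formula is available. Fixing such an $A$, the goal is to deduce $|\partial A|/|A|\ge c_2/(\mu c_1)$, and then take the infimum over all $A$ to conclude.

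First I would bound the left-hand side from below. Since $\aleph_A$ vanishes off $A$, we have $\sum_{x\in V}(\Delta f)(x)\aleph_A(x)|N(x)| = \sum_{x\in A}(\Delta f)(x)|N(x)|$. Hypothesis (ii) gives $(\Delta f)(x)\ge c_2$, and in a connected graph $|N(x)|\ge 1$, so each summand is at least $c_2$ and the left-hand side is at least $c_2|A|$.

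Next I would bound the right-hand side from above in absolute value. The key observation is that $\nabla_{\!xy}\aleph_A=\aleph_A(y)-\aleph_A(x)$ is nonzero only when $xy\in E$ and exactly one of $x,y$ lies in $A$; that is, only the edges joining $A$ to $V\setminus A$ contribute. For each such ordered pair, hypothesis (i) gives $|\nabla_{\!xy}f|\le c_1$ while $|\nabla_{\!xy}\aleph_A|=1$, so each nonzero term has absolute value at most $c_1$. Each unordered crossing edge is counted twice in the sum over ordered pairs, so the factor $2$ cancels the $\tfrac12$ and the right-hand side is bounded in absolute value by $c_1\, E(A,A^c)$, where $E(A,A^c)$ denotes the number of edges with one endpoint in $A$ and the other outside.

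The main obstacle, and the place where $\mu$-uniformity is used, is to pass from the edge count $E(A,A^c)$ to the vertex boundary $|\partial A|$. Every crossing edge has its endpoint outside $A$ in $\partial A=\{v\mid d_{\,\Gamma}(v,A)=1\}$, and each such boundary vertex is incident to at most $|N(v)|\le\mu$ edges; hence $E(A,A^c)\le\mu|\partial A|$. Combining the two estimates with the equality of the two sides of Green's formula yields $c_2|A|\le c_1\mu|\partial A|$, that is, $|\partial A|/|A|\ge c_2/(\mu c_1)$. Taking the infimum over all non-empty finite $A$ gives $h(\Gamma)\ge c_2/(\mu c_1)>0$. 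I expect the only delicate bookkeeping to be the ordered-versus-unordered edge counting and the correct interpretation of $\partial A$ as the outer boundary, which is precisely what makes the uniformity hypothesis enter.
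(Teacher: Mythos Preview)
Your proof is correct and follows essentially the same route as the paper: apply Green's formula with $g=\aleph_A$, bound the left side below by $c_2|A|$ using (ii) and $|N(x)|\ge 1$, bound the right side above by $c_1$ times the number of crossing edges using (i), and then dominate that edge count by $\mu|\partial A|$ via $\mu$-uniformity. The only cosmetic difference is that the paper writes the crossing-edge count as $\sum_{x\in\partial A}|N(x)\cap A|$ rather than your $E(A,A^c)$, but these are the same quantity.
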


\begin{proof} For any finite subset $A\subset V$,
\[
\begin{aligned}
c_2|A|
& \le \sum_{x\in A} (\Delta f)(x)|N(x)|
=\sum_{x\in V} (\Delta f)(x)\aleph_A(x)|N(x)|
= -\frac{1}{2}\!\sum_{x,y\in V}(\nabla_{\!xy} f)(\nabla_{\!xy}\aleph_A)
\\
& \le \frac{1}{2}\,c_1\!\!\!\sum_{x,y\in V}|\nabla_{\!xy} \aleph_A|
= c_1\!\!\sum_{x\in \partial A}|N(x)\cap A|
\le c_1 \mu |\partial A |.
\end{aligned}
\]
Therefore, $\frac{|\partial A|}{|A|}\geq \frac{c_2}{\mu c_1}$.
\end{proof}

The argument in the proof of Proposition \ref{p:sufficient} has the following direct consequence.

\begin{corollary} \label{c:sufficient} Let $\Gamma$ be a graph. Suppose that there is a function $f$
defined on the vertex set $V$ and satisfying the following:
\begin{itemize}
	\item[(i)] $|\nabla_{\!xy} f|\le c_1$ for every $xy\in E$,
	\item[(ii)] $(\Delta f)(x) \ge c_2 > 0$ for every $x\in V$.
\end{itemize}	
Then $\frac{|\partial A|}{|A|}\geq \frac{c_2}{c_1}$ for every finite subset $A\subset V$ such that $|N(x)\cap A|=1$ for each $x \in \p A$.
\end{corollary}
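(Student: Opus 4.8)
The plan is to re-run the chain of inequalities from the proof of Proposition \ref{p:sufficient} almost verbatim, observing that the $\mu$-uniformity hypothesis was invoked there only in the very last step. Fix a finite subset $A\subset V$ with $|N(x)\cap A|=1$ for every $x\in\p A$. Applying Green's formula with $g=\aleph_A$ exactly as before, and using hypotheses (ii) and (i) for the first and last inequalities respectively, I would obtain
\[
c_2|A|\le \sum_{x\in A}(\Delta f)(x)|N(x)|=-\frac{1}{2}\sum_{x,y\in V}(\nabla_{\!xy}f)(\nabla_{\!xy}\aleph_A)\le \frac{1}{2}\,c_1\sum_{x,y\in V}|\nabla_{\!xy}\aleph_A| .
\]

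Next I would evaluate the combinatorial sum on the right. The quantity $\nabla_{\!xy}\aleph_A$ is nonzero (and then equal to $\pm1$) precisely when the ordered edge $xy$ joins $A$ to its complement; since each such unordered edge is counted in both orderings, $\tfrac12\sum_{x,y\in V}|\nabla_{\!xy}\aleph_A|$ equals the number of edges running from $\p A$ into $A$, namely $\sum_{x\in\p A}|N(x)\cap A|$. This is the same expression appearing in the Proposition's proof. The two arguments diverge exactly here: in Proposition \ref{p:sufficient} one estimates $|N(x)\cap A|\le\mu$, whereas the present hypothesis $|N(x)\cap A|=1$ turns the sum into $\sum_{x\in\p A}1=|\p A|$, eliminating the factor $\mu$. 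Combining the estimates gives $c_2|A|\le c_1|\p A|$, hence $\frac{|\p A|}{|A|}\ge \frac{c_2}{c_1}$.

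There is no genuine obstacle here; the only point requiring a moment of care is the identity $\tfrac12\sum_{x,y\in V}|\nabla_{\!xy}\aleph_A|=\sum_{x\in\p A}|N(x)\cap A|$. Under the paper's convention that $\p A$ consists of the vertices outside $A$ that are adjacent to $A$, every $A$-to-complement edge has its unique endpoint outside $A$ lying in $\p A$, so summing $|N(x)\cap A|$ over $x\in\p A$ counts each such edge exactly once. The hypothesis then collapses this count to $|\p A|$, replacing the final inequality of the Proposition by an equality and yielding the sharper constant $c_2/c_1$ in place of $c_2/(\mu c_1)$.
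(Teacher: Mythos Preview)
Your proposal is correct and follows precisely the approach the paper intends: the paper itself offers no separate proof of Corollary~\ref{c:sufficient}, stating only that ``the argument in the proof of Proposition~\ref{p:sufficient} has the following direct consequence,'' and you have correctly identified that the only change is replacing the final bound $|N(x)\cap A|\le\mu$ by the hypothesis $|N(x)\cap A|=1$, turning $c_1\mu|\partial A|$ into $c_1|\partial A|$.
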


\section{Trees}

The end space of a tree is one of the ways to define a metric on its boundary at infinity. This definition allows to build several categorical equivalences between trees and ultrametric spaces. Let us recall some basic definitions from \cite{Hug}. See also \cite{Hug-M-M} and \cite{M-M}.

A \emph{rooted tree}, $(T,v)$, consists of a tree $T$ and a fixed point $v\in T$, called  the \emph{root}.

\begin{definition} An \emph{ultrametric space} is a metric space $(X,d)$ such that
$d(x,y)\leq \max \{d(x,z),d(z,y)\}$
for all $x,y,z\in X$.
\end{definition}

\begin{definition}\label{end} The \emph{end space} of a rooted tree $(T,v)$ is given by:
$$end(T,v)=\{F: [0,\infty) \rightarrow T \ |\ \text{$F(0)=v$ and $F$
is an isometric embedding}\}.$$
Let $F, G\in end(T,v)$.
\begin{enumerate}
	\item  The {\it Gromov product at infinity} is $(F|G)_v :=\sup \{t\geq 0 \ |\ F(t)=G(t)\}$.
	\item  The {\it end space metric} is $d_v(F,G) := e^{-(F|G)_v}$.
\end{enumerate}
\end{definition}

\begin{proposition} If $(T,v)$ is a rooted tree, then $(end(T,v),d_v)$ is a complete ultrametric space of diameter
at most $1$.
\end{proposition}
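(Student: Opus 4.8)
The plan is to first isolate the one structural fact about trees that drives the whole proof: geodesic segments in $T$ are unique, so two rays $F,G\in end(T,v)$ issuing from the common point $v$ agree on an initial interval and then separate. Indeed, if $F(s)=G(s)$, then $F|_{[0,s]}$ and $G|_{[0,s]}$ are both the unique geodesic from $v$ to that common point, parametrized by arclength, whence $F(t)=G(t)$ for all $t\le s$. Consequently $\{t\ge 0 : F(t)=G(t)\}$ is an initial interval and $F(t)=G(t)$ for all $t<(F|G)_v$ (with $(F|G)_v=\infty$ precisely when $F=G$). From this the elementary properties follow at once: since $F(0)=G(0)=v$ we always have $(F|G)_v\ge 0$, so $d_v(F,G)=e^{-(F|G)_v}\le 1$, giving diameter at most $1$; symmetry is clear; and $d_v(F,G)=0$ forces $(F|G)_v=\infty$, i.e.\ $F=G$.

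Next I would prove the ultrametric inequality in the equivalent form $(F|G)_v\ge\min\{(F|H)_v,(H|G)_v\}$. Setting $a=(F|H)_v$ and $b=(H|G)_v$ with, say, $a\le b$, for every $t<a$ one has $F(t)=H(t)$ (since $t<a$) and $H(t)=G(t)$ (since $t<b$), hence $F(t)=G(t)$; this yields $(F|G)_v\ge a=\min\{a,b\}$, which is exactly $d_v(F,G)\le\max\{d_v(F,H),d_v(H,G)\}$.

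The substantive part, and where I expect the real work, is completeness. Let $(F_n)$ be a Cauchy sequence. Rewriting $d_v$-smallness in terms of Gromov products, for every $R>0$ there is $N=N(R)$ such that $(F_m|F_n)_v>R$, and hence $F_m(t)=F_n(t)$ for all $t\le R$, whenever $m,n\ge N$. The key point, peculiar to the ultrametric/tree setting, is that for each fixed $t$ the sequence $(F_n(t))_n$ is not merely convergent but \emph{eventually constant}, so the limit ray can be built by hand without invoking any completeness of $T$ itself. I would therefore define $F(t)$ to be this eventual value and then verify that $F\in end(T,v)$: one has $F(0)=v$, and for any $s,t$ choosing $R>\max\{s,t\}$ gives $F(s)=F_n(s)$, $F(t)=F_n(t)$ for large $n$, so $d(F(s),F(t))=d(F_n(s),F_n(t))=|s-t|$ and $F$ is an isometric embedding. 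Finally, to see $F_n\to F$, given $\e\in(0,1)$ set $R=-\log\e$; for $n\ge N(R)$ one has $F_n(t)=F(t)$ for all $t\le R$, so $(F_n|F)_v\ge R$ and $d_v(F_n,F)\le e^{-R}=\e$.

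The only genuinely delicate point is the completeness step, specifically checking that the pointwise-stabilized map $F$ is again an \emph{isometric} embedding rather than merely $1$-Lipschitz; this is guaranteed by the uniqueness of geodesics in $T$, which is also what lets the construction bypass any hypothesis on $T$ being complete.
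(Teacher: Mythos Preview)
Your proof is correct and complete. However, the paper does not actually prove this proposition: it is stated without proof, presumably as a standard fact (the surrounding material on end spaces of trees is attributed to \cite{Hug}). So there is no ``paper's own proof'' to compare against.

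For the record, each step you give is sound. The crucial observation---that uniqueness of geodesics in a tree forces the agreement set $\{t\ge 0:F(t)=G(t)\}$ to be an initial interval---cleanly yields both the ultrametric inequality and, more importantly, the completeness argument. Your point that for a $d_v$-Cauchy sequence the values $F_n(t)$ are eventually \emph{constant} (not merely Cauchy in $T$) is exactly right, and it is what allows completeness of $end(T,v)$ without assuming anything about completeness of $T$. The verification that the limit map $F$ is an isometric embedding, by freezing a large enough $n$, is also correct.
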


Given a tree $T$ and a fixed point $v$, for any point $x\in T$ let \[T^v_x:=\{y\in T \, | \, x\in [vy]\}\] where $[vy]$ denotes the geodesic in $T$ joining $v$ and $y$. Notice that $T^v_x$ is also connected and, therefore, a tree.

\begin{definition} A rooted tree $(T,v)$ is \emph{geodesically complete} if every isometric embedding
$f: [0,t]\rightarrow T$ with $t>0$ and  $f(0)=v$ extends to an isometric embedding $F: [0,\infty) \rightarrow T$.
\end{definition}

\begin{theorem}\cite{M-M} If $(T,v)$ is a rooted tree, then
there exists a unique geodesically complete subtree $(T_\infty,v)\subseteq (T,v)$ that is maximal under inclusion.
\end{theorem}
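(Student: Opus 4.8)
The plan is to build $T_\infty$ explicitly as the union of all geodesic rays issuing from the root, and then to verify that this set is simultaneously geodesically complete and larger than every competitor; maximality and uniqueness will then follow formally. Concretely I would set
\[
T_\infty := \{v\}\cup\bigcup_{F\in end(T,v)}F([0,\infty)),
\]
that is, the set of points $x\in T$ whose geodesic $[vx]$ is an initial segment of some $F\in end(T,v)$. It contains $v$ (since $F(0)=v$), and if $x\in\mathrm{Im}(F)$ then $[vx]=F([0,d(v,x)])\subseteq T_\infty$; thus $T_\infty$ contains the geodesic joining $v$ to each of its points, is connected, and hence is a subtree. (In the simplicial setting one further checks that it is closed: a point fails to lie in $T_\infty$ precisely when no continuation of $[vx]$ escapes to infinity, which is an open condition on $x$.)

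Next I would verify geodesic completeness of $T_\infty$. Given an isometric embedding $f:[0,t]\to T_\infty$ with $t>0$ and $f(0)=v$, put $x=f(t)$. Uniqueness of geodesics in a tree forces $f([0,t])=[vx]$ and $t=d(v,x)$. Since $x\in T_\infty$, choose $F\in end(T,v)$ with $x\in\mathrm{Im}(F)$; then $F|_{[0,d(v,x)]}$ is again the geodesic $[vx]$, so $F|_{[0,t]}=f$ and $F$ extends $f$ to $[0,\infty)$ with image inside $T_\infty$ by definition. Hence $T_\infty$ is geodesically complete (vacuously so when $T$ carries no rays, in which case $T_\infty=\{v\}$).

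For maximality and uniqueness, let $S$ be any geodesically complete subtree with $v\in S$, and take $x\in S$. As $[vx]\subseteq S$, geodesic completeness of $S$ extends the parametrization $[0,d(v,x)]\to S$ of $[vx]$ to a ray $F:[0,\infty)\to S\subseteq T$ with $F(0)=v$, so $x\in\mathrm{Im}(F)$ and therefore $x\in T_\infty$; this gives $S\subseteq T_\infty$. Combined with the previous paragraph, $T_\infty$ is the \emph{largest} geodesically complete subtree through $v$, and any maximal one is contained in and hence equal to $T_\infty$, which yields uniqueness.

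The step most likely to require care is confirming that $T_\infty$ is a genuine subtree---in particular that it is closed---so that the ray produced in the completeness argument really returns to $T_\infty$ rather than merely to $T$. Everything hinges on the uniqueness of geodesics in trees, which is exactly what permits the initial segment of any ray through $x$ to be identified with $[vx]$ and glued to $f$; once that identification is secured, the three properties above are essentially formal.
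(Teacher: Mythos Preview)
The paper does not supply a proof of this statement; it is quoted from \cite{M-M} without argument, so there is nothing in the paper to compare against. Your construction---taking $T_\infty$ to be the union of all geodesic rays from $v$, then checking connectivity, geodesic completeness via uniqueness of geodesics in trees, and finally that every geodesically complete subtree through $v$ embeds in $T_\infty$---is the natural one and is correct. It in fact yields the \emph{maximum} such subtree, from which uniqueness is immediate.

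Your caveat about closedness is well taken but slightly misplaced. Closedness is not needed for the completeness step: the extending ray $F$ has image in $T_\infty$ by the very definition of $T_\infty$, so that part already goes through. Where closedness can genuinely fail is in the general $\mathbb{R}$-tree setting. For example, attach an infinite ray to $[0,1]$ at each point $1-1/n$ and root at $0$; then $T_\infty$ is $[0,1)$ together with all the rays, which is not closed in $T$, and in fact no \emph{closed} geodesically complete subtree is maximal there. In the simplicial setting of the present paper (edges of length $1$) this issue evaporates, since $T_\infty$ is a union of full edges. So your argument is complete once ``subtree'' is read as ``connected subset containing $v$'' rather than ``closed subspace''; you may want to flag that convention explicitly.
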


\begin{proposition}\label{p:end_complete} If $(T,v)$ is a rooted tree, then $end(T_\infty,v)=end(T,v)$.
\end{proposition}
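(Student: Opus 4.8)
The plan is to prove the two inclusions $end(T_\infty,v)\subseteq end(T,v)$ and $end(T,v)\subseteq end(T_\infty,v)$ separately, the first being routine and the second being the substance of the statement. For the first inclusion, I would observe that $T_\infty$ is a subtree of $T$, so that any two of its points are joined inside $T_\infty$ by the (unique) geodesic of $T$; consequently the inclusion $\iota\colon T_\infty\hookrightarrow T$ is an isometric embedding. Any $F\in end(T_\infty,v)$ is an isometric embedding $[0,\infty)\to T_\infty$ with $F(0)=v$, so $\iota\circ F$ is an isometric embedding $[0,\infty)\to T$ with the same initial point, i.e. an element of $end(T,v)$. Since $\iota$ is the identity on points, this gives $end(T_\infty,v)\subseteq end(T,v)$.

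For the reverse inclusion, the key observation I would use is that the image of a single ray is already a geodesically complete subtree. Concretely, given $F\in end(T,v)$, let $R:=F([0,\infty))$. I would first check that $R$ is a subtree through $v$: it is connected and, being the image of a geodesic ray, it is geodesically convex in $T$, so $[vx]\subseteq R$ for every $x\in R$. Next I would verify that $(R,v)$ is geodesically complete: any isometric embedding $g\colon[0,t]\to R$ with $g(0)=v$ must trace out the unique geodesic $[v\,g(t)]=F([0,t])$, hence coincides with $F|_{[0,t]}$ and therefore extends to the ray $F\colon[0,\infty)\to R$. Once $R$ is known to be a geodesically complete subtree containing $v$, maximality of $T_\infty$ forces $R\subseteq T_\infty$, and then $F$ is an isometric embedding $[0,\infty)\to T_\infty$ with $F(0)=v$, i.e. $F\in end(T_\infty,v)$.

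The step that needs care, and which I expect to be the main obstacle, is turning ``maximal under inclusion'' into the statement actually used, namely that $T_\infty$ \emph{contains} every geodesically complete subtree through $v$. To bridge this I would prove a short lemma: if $S_1,S_2$ are geodesically complete subtrees both containing $v$, then $S_1\cup S_2$ is again a geodesically complete subtree containing $v$. Connectedness (hence subtree-ness) of $S_1\cup S_2$ follows because both pieces contain $v$ and a union of two convex subsets of a tree sharing a point is convex; geodesic completeness follows by the same geodesic-tracing argument as above, applied in whichever $S_i$ contains the endpoint $g(t)$. Applying this with $S_1=R$ and $S_2=T_\infty$ yields a geodesically complete subtree $R\cup T_\infty\supseteq T_\infty$, which by maximality must equal $T_\infty$; hence $R\subseteq T_\infty$, completing the argument. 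A secondary point to watch is the precise form of the definition of geodesic completeness (it quantifies only over embeddings issued from the root $v$), but this is exactly what makes the tracing argument for $R$ go through, since every finite geodesic in $R$ starting at $v$ is an initial segment of $F$.
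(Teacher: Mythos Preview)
Your proof is correct and follows essentially the same route as the paper: both arguments hinge on observing that $T_\infty\cup F([0,\infty))$ is a geodesically complete subtree and then invoking maximality of $T_\infty$. The paper phrases this step as a one-line contradiction (``$T':=T_\infty\cup F[0,\infty)$ is geodesically complete and $T_\infty\subsetneq T'$''), while you supply the justification for the union being geodesically complete via your lemma on $S_1\cup S_2$; this is exactly the detail the paper leaves implicit.
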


\begin{proof} Since $(T_\infty,v)\subseteq (T,v)$, it is trivial that $end(T_\infty,v)\subseteq end(T,v)$.

Seeking for a contradiction, assume that there is a geodesic ray $F$ emanating from $v$ in $(T,v)$ with $F[0,\infty)\not \subset T_\infty$.
Define $T':=T_\infty\cup F[0,\infty)$. Notice that $T'$ is also
geodesically complete and $T_\infty \subsetneq T'\subseteq T$ leading to contradiction since $T_\infty$ is maximal with this condition.
Hence, $end(T,v) \subseteq end(T_\infty,v)$.
\end{proof}

Recall that a \emph{geodesic space} is a metric space such that for every couple of points there exists a geodesic joining them.

\begin{definition}  A geodesic space $X$ has a \emph{pole} in a point $v$ if there
exists $M > 0$ such that each point of $X$ lies in an $M$-neighborhood of some geodesic
ray emanating from $v$.
\end{definition}

\begin{proposition}\label{p:quasi-isometry} If $v$ is a pole in a tree $T$, then the inclusion
$i:(T_\infty,v)\to (T,v)$ is a quasi-isometry.
\end{proposition}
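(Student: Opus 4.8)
The plan is to verify the two defining properties of a quasi-isometry directly: that $i$ is an $(\alpha,\beta)$-quasi-isometric embedding, and that it is $\varepsilon$-full. In fact I expect the strongest possible embedding constants, namely $\alpha=1$ and $\beta=0$, so the only genuine content lies in the fullness, which is where the pole hypothesis enters.

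First I would check that $i$ is an isometric embedding. Since $T_\infty$ is a subtree of $T$, it is connected, and in a tree any connected subset is convex: the unique arc $[xy]$ joining two points $x,y\in T_\infty$ must be contained in every connected subset containing $x$ and $y$, because a tree admits no other injective path between them. Hence $[xy]\subseteq T_\infty$ for all $x,y\in T_\infty$, so the length metric of $T_\infty$ agrees with $d_T$ on $T_\infty$; that is, $d_{T_\infty}(x,y)=d_T(i(x),i(y))$, and $i$ is a $(1,0)$-quasi-isometric embedding.

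It remains to show that $i$ is $M$-full, where $M$ is the pole constant. The key observation is that every geodesic ray emanating from $v$ in $T$ has its image contained in $T_\infty$. This is precisely the content of $end(T,v)=end(T_\infty,v)$ from Proposition \ref{p:end_complete}: an element of $end(T,v)$ is by definition an isometric embedding $F\colon[0,\infty)\to T$ with $F(0)=v$, i.e. a geodesic ray from $v$, and the equality of end spaces forces such an $F$ to take values in $T_\infty$ (equivalently, one may re-run the maximality argument from the proof of Proposition \ref{p:end_complete}, since $T_\infty\cup F[0,\infty)$ would otherwise be a strictly larger geodesically complete subtree). Now take any $x\in T$. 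By the pole condition there is a geodesic ray $F$ from $v$ with $d_T(x,F[0,\infty))\le M$, and since $F[0,\infty)\subseteq T_\infty$ we obtain $d_T(x,T_\infty)\le M$. Thus $i$ is $M$-full.

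Combining the two steps, $i$ is an $M$-full $(1,0)$-quasi-isometric embedding, hence a quasi-isometry. The only delicate point—and the step I would be most careful about—is the claim that rays based at $v$ remain inside $T_\infty$; everything else is formal. I would take care to invoke Proposition \ref{p:end_complete} with the correct identification between geodesic rays based at $v$ and points of the end space, since it is exactly this identification that converts a statement about boundaries into the metric fullness we need.
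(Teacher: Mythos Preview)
Your proof is correct and follows the same approach as the paper's. The paper's proof is extremely terse (two sentences), asserting without justification that $i$ is a $(1,0)$-quasi-isometric embedding and that the pole condition makes it $M$-full; you have simply supplied the missing details, in particular the appeal to Proposition~\ref{p:end_complete} to ensure that every geodesic ray from $v$ in $T$ already lies in $T_\infty$, which is exactly the step the paper leaves implicit.
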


\begin{proof}
The inclusion $i$ is a $(1,0)$-quasi-isometric embedding.
Since $v$ is a pole, there is a constant $M>0$ such that $i$ is $M$-full. Hence, it is a quasi-isometry.
\end{proof}

\begin{definition} Given a tree $T$ with a pole $v$ and a natural number $K$, we say that $(T,v)$
is $K$-\emph{pseudo-regular} if for every $t\in \mathbb{N}$
and every $a\in S(v,t)$, there exist at least two points $y_1,y_2$ in $S(v,t+K)\cap T^v_a$.
We say that $(T,v)$ is \emph{pseudo-regular} if there exists some $K$ such that $(T,v)$ is
$K$-pseudo-regular.
\end{definition}

\begin{proposition} If the rooted tree $(T,v)$ is pseudo-regular, then it is geodesically complete.
\end{proposition}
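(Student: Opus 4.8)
The plan is to reduce geodesic completeness to a single structural fact, namely that \emph{every} subtree $T^v_a$ hanging below a vertex $a$ is unbounded, and then to assemble the required ray by hand. Recall that $(T,v)$ is geodesically complete exactly when every isometric embedding $f\colon[0,t]\to T$ with $f(0)=v$ prolongs to a ray $F\colon[0,\infty)\to T$. Since $T$ is a tree, such an $f$ is nothing but the arclength parametrization of the unique geodesic $[vx]$, where $x=f(t)$; so the whole problem is to continue $[vx]$ away from $v$ indefinitely, keeping $[vx]$ as an initial segment.

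First I would extract the key consequence of pseudo-regularity: for every vertex $a\in V$ the subtree $T^v_a$ contains vertices at arbitrarily large distance from $v$. Writing $d(v,a)=t\in\NN$, $K$-pseudo-regularity yields a vertex $y_1\in S(v,t+K)\cap T^v_a$ (one of the two guaranteed points suffices). As $y_1\in S(v,t+K)$ is again a vertex, I apply $K$-pseudo-regularity to $y_1$ to get $y_2\in S(v,t+2K)\cap T^v_{y_1}$, and iterate. Since $y_1\in T^v_a$ means $a\in[vy_1]$, any $z\in T^v_{y_1}$ has $y_1\in[vz]$ and hence $a\in[vy_1]\subseteq[vz]$, so $T^v_{y_1}\subseteq T^v_a$; inductively $T^v_{y_n}\subseteq T^v_{y_{n-1}}\subseteq\cdots\subseteq T^v_a$. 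Thus every $y_n$ lies in $T^v_a$ with $d(v,y_n)=t+nK\to\infty$, so $T^v_a$ is unbounded; moreover the $y_n$ are nested, $y_{n-1}\in[vy_n]$, so the geodesics $[vy_n]$ form an increasing chain.

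Next I would build the extension of a given $f\colon[0,t]\to T$ with endpoint $x=f(t)$. I choose a vertex $w\in T^v_x$ with $x\in[vw]$: take $w=x$ when $x$ is a vertex, and otherwise let $w$ be the endpoint farther from $v$ of the edge whose interior contains $x$, so that $x\in[vw]$ and $T^v_w\subseteq T^v_x$. Applying the previous paragraph with $a=w$ produces nested vertices $w=y_0,y_1,y_2,\dots$ with $y_n\in T^v_w\subseteq T^v_x$ and $d(v,y_n)\to\infty$. Because $x\in[vw]\subseteq[vy_n]$, each geodesic $[vy_n]$ contains $[vx]=f([0,t])$ as an initial segment, and the $[vy_n]$ increase with $n$. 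Parametrizing each $[vy_n]$ by arclength gives maps that agree on their common domain (they are all the unique geodesic from $v$), so their union defines $F\colon[0,\infty)\to T$ with $F(0)=v$, which is isometric on every bounded interval and therefore an isometric embedding, and which satisfies $F|_{[0,t]}=f$ since $d(v,x)=t$ along each $[vy_n]$. Hence $f$ extends to a ray and $(T,v)$ is geodesically complete.

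The point requiring care is that $T$ need not be locally finite, so unboundedness of $T^v_x$ by itself does not produce a ray through a compactness or K\"onig-type argument; it is precisely the explicit nested sequence $(y_n)$ coming from iterating pseudo-regularity that makes the limit ray well defined. A second, minor obstacle is that the endpoint $x=f(t)$ may be interior to an edge (as $t$ is an arbitrary positive real), which is why I pass first to the far vertex $w$ of that edge before invoking the unboundedness statement; note also that only one of the two points provided by $K$-pseudo-regularity is needed for this proposition.
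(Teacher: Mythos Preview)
Your proof is correct and follows essentially the same route as the paper: iterate $K$-pseudo-regularity to obtain a nested sequence of vertices $y_n$ with $d(v,y_n)\to\infty$ and take the union of the geodesics $[vy_n]$ as the extending ray. The paper's own argument actually restricts to $t_0\in\NN$ without comment, whereas you explicitly handle the case where the endpoint $x=f(t)$ lies in the interior of an edge by first passing to the far vertex $w$; this is a small but genuine improvement in completeness, and your remark that only one of the two guaranteed points is needed, and that K\"onig-type compactness is unavailable in the non-locally-finite case, is also well observed.
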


\begin{proof} Choose $K\in\NN$ such that $(T,v)$ is $K$-pseudo-regular. Consider any isometric embedding $F_1: [0,t_0]\rightarrow T$ with $t_0\in \NN$ and $F_1(0)=v$, and let $a_1=F_1(t_0)$.
Since $(T,v)$ is $K$-pseudo-regular, there exists a point $a_2$ in $S(v,t_0+K)\cap T^v_{a_1}$ and there is an extension of $F_1$ to a geodesic $F_2: [0,t_0+K] \to T$ with $F_2(t_0+K)=a_2 \in S(v,t_0+K)$.
Repeating the process we obtain geodesics $F_n:[0,t_0+(n-1)K]\to T$ with $F_n(t_0+(n-1)K)=a_n \in S(v,t_0+(n-1)K)$ where $F_n$ is an extension of $F_{n-1}$.
Then, let $F:[0,\infty) \to T$ be such that $F(t):=F_n(t)$ for any $n$ with $t_0+(n-1)K\geq t$. Trivially, $F$ is an isometric embedding extending $F_1$.
\end{proof}

Given a graph $\G$ and $A\subset V(\G)$, we denote by $G(A)$ the subgraph induced by $A$, i.e., the subgraph of $\G$ with $V(G(A))=A$ and
$E(G(A))=\{ xy\in E(\G)\,|\; x,y\in A\}$.

\begin{lemma} \label{l1:caract_complete} Let $(T,v)$ be a $1$-pseudo-regular rooted tree,
and $A\subset V(T)$ a finite set such that $G(A)$ is connected.
Then $\frac{|\partial A|}{|A|}\geq \frac{1}{3}$.
\end{lemma}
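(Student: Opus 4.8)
The plan is to apply Corollary \ref{c:sufficient} to the function $f(x):=d(v,x)$, the distance from the root, which is precisely the tool flagged just before the statement. The reason for using that corollary rather than Proposition \ref{p:sufficient} is twofold: its extra hypothesis---that every boundary vertex has a single neighbor in $A$---turns out to be automatic here, and it produces the ratio $c_2/c_1$ without the factor $\mu$. The latter is essential, since a $1$-pseudo-regular tree need not have bounded degree, so Proposition \ref{p:sufficient} would not be available.

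First I would record the structural fact on which everything hinges: since $T$ is a tree and $G(A)$ is connected, each $w\in\partial A$ satisfies $|N(w)\cap A|=1$. Indeed, if $w$ had two neighbors $a,b\in A$, then the edges $wa$ and $wb$, together with the unique arc joining $a$ and $b$ inside the connected subtree $G(A)$ (an arc that avoids $w$, as $w\notin A$), would close a cycle in $T$, which is impossible. Since $w\in\partial A$ has at least one neighbor in $A$, it has exactly one. Hence the hypothesis of Corollary \ref{c:sufficient} on $A$ holds, and it only remains to verify conditions (i) and (ii) for $f$.

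Condition (i) is immediate: for an edge $xy\in E(T)$ the endpoints lie on consecutive spheres about $v$, so $|\nabla_{\!xy}f|=|d(v,y)-d(v,x)|=1$, and we may take $c_1=1$. For (ii) I would compute the discrete Laplacian. If $x\neq v$, then $x$ has exactly one neighbor closer to $v$ (its parent, contributing $-1$) and, by $1$-pseudo-regularity applied to the sphere containing $x$, at least two neighbors farther from $v$; writing $c(x)\ge 2$ for the number of the latter, $N(x)$ has $c(x)+1$ elements and
\[
(\Delta f)(x)=\frac{1}{c(x)+1}\big(c(x)\cdot 1+1\cdot(-1)\big)=\frac{c(x)-1}{c(x)+1}\ge\frac{2-1}{2+1}=\frac13,
\]
using that $t\mapsto\frac{t-1}{t+1}$ is increasing. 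At the root $x=v$ every neighbor is farther, so $(\Delta f)(v)=1\ge\frac13$. Thus (ii) holds with $c_2=\frac13$, and Corollary \ref{c:sufficient} gives $\frac{|\partial A|}{|A|}\ge c_2/c_1=\frac13$.

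The only genuinely delicate point is the first step, the verification that the corollary even applies: it is exactly acyclicity of $T$ that forces $|N(w)\cap A|=1$ along the boundary, and this is what converts the edge-boundary estimate coming from Green's formula into the desired vertex-boundary estimate with no degree bound in sight. The Laplacian computation is then routine, and one sees that the constant $\frac13$ is dictated by the worst case $c(x)=2$, that is, by vertices with the minimal branching that $1$-pseudo-regularity permits.
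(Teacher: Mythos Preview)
Your proof is correct and follows essentially the same approach as the paper: both use $f(x)=d(v,x)$, verify $|\nabla f|=1$ and $\Delta f\ge\frac13$ via the $1$-pseudo-regularity, observe that $|N(w)\cap A|=1$ for $w\in\partial A$ because $T$ is a tree and $G(A)$ is connected, and conclude by Corollary~\ref{c:sufficient}. Your write-up is slightly more detailed in justifying the boundary condition and in explaining why Corollary~\ref{c:sufficient} rather than Proposition~\ref{p:sufficient} is the right tool, but the argument is the same.
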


\begin{proof}
Let $f$ be the function defined on $V(T)$ such that $f(w)=n$ for every $w\in S(v,n)$ and $n\ge 0$.
Since $T$ is a tree, it is immediate to check that
$|\nabla_{\!xy}f|= 1$ for every $xy \in E(T)$. Also,
$$
(\Delta f)(v)
= \frac1{|N(v)|}\sum_{w\in N(v)}\big( f(w)-f(v)\big)
= \frac1{|N(v)|}\sum_{w\in N(v)}\big( 1-0\big)
=1.
$$
Since $(T,v)$ is $1$-pseudo-regular, if $w\in S(v,n)$ with $n>0$, then
$$
(\Delta f)(w)
= \frac1{|N(w)|}\big( |N(w)\cap S(v,n+1)|-1 \big)
= \frac{|N(w)\cap S(v,n+1)|-1}{|N(w)\cap S(v,n+1)|+1}
\ge \frac13\,.
$$
Since $T$ is a tree and $G(A)$ is connected, we have $|N(x)\cap A|=1$ for every $x\in \p A$.
Therefore, by Corollary \ref{c:sufficient}, $\frac{|\partial A|}{|A|}\geq \frac{1}{3}$.
\end{proof}

Given a rooted tree $(T,v)$ and $A\subset V(T)$, we say that $w\in\p A$ is a \emph{non-essential vertex in} $\p A$ if $d(v,w)<d(v,z)$ for every $z\in A$,
and we denote by $\p_{ne}A$ the set of non-essential vertices in $\p A$.
Since $T$ is a tree, if $G(A)$ is connected, then $|\p_{ne}A|= 1$ when $v\notin A$ and $\p_{ne}A= \emptyset$ otherwise.
The \emph{essential boundary} $\p_e A$ of $A$ is the set $\p_{e}A = \p A \setminus \p_{ne}A$.
Define $\p_{e}^1 A$ as the set of vertices in $A$ at distance $1$ from $\p_{e}A$, i.e.,
the set of neighbors of $\p_{e}A$ in $A$.

\begin{lemma} \label{l2:caract_complete} Let $(T,v)$ be a $1$-pseudo-regular rooted tree,
and $A\subset V(T)$ a finite set such that $G(A)$ is connected.
Then $\frac{|\partial_e A|}{|A|}\geq \frac{1}{3}$ and $\frac{|\partial_e^1 A|}{|A|}\geq \frac{1}{6}$.
\end{lemma}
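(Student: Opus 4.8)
The plan is to reuse the function and the Green-type computation from the proof of Lemma~\ref{l1:caract_complete}, but to keep track of the \emph{sign} of each boundary term instead of passing to absolute values as in Proposition~\ref{p:sufficient}. Let $f(w)=d(v,w)$, so that $|\nabla_{\!xy}f|=1$ on every edge and $(\Delta f)(x)\ge \frac13$ for every $x\in V(T)$, exactly as established there. Since $T$ is a tree and $G(A)$ is connected, every $b\in\partial A$ has a unique neighbour $a_b$ in $A$, so the discrete Green's formula yields the exact identity
\[
\sum_{x\in A}(\Delta f)(x)\,|N(x)| \;=\; \sum_{b\in\partial A}\big(f(b)-f(a_b)\big).
\]
Each term on the right is $\pm1$: it equals $+1$ when $b$ is a child of $a_b$ (that is, $d(v,b)>d(v,a_b)$) and $-1$ when $b$ is the parent of $a_b$. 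A boundary vertex is non-essential precisely when it is such a parent, and there is at most one of these; hence the right-hand side equals $|\partial_e A|-|\partial_{ne}A|$. Bounding the left-hand side below by $\frac13|A|$ (using $(\Delta f)(x)\ge\frac13$ and $|N(x)|\ge1$) and discarding the nonnegative term $|\partial_{ne}A|$ gives $|\partial_e A|\ge\frac13|A|$, which is the first assertion.

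For the second assertion I would argue combinatorially on the finite subtree $G(A)$, rooted at the vertex $a_0\in A$ nearest to $v$, using the structural consequence of $1$-pseudo-regularity that every vertex of $T$ at positive distance from $v$ has at least two children. First, every leaf of $G(A)$ (a vertex of $A$ with no child in $A$) has all of its $\ge2$ children outside $A$, and those children are essential boundary vertices, so every leaf lies in $\partial_e^1 A$. Writing $L$ for the number of leaves, this gives $L\le|\partial_e^1 A|$. Second, a vertex of $A\setminus\partial_e^1 A$ has no child outside $A$, hence all of its children inside $A$; for every such vertex other than $v$ this is at least two children in $A$, so it is a branching vertex of $G(A)$, and the standard inequality (number of branching vertices)$\,\le L-1$ in a rooted tree gives $|A\setminus\partial_e^1 A|\le L$ (the extra $+1$ absorbing the possible exceptional vertex $v$). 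Combining the two bounds,
\[
|A| = |\partial_e^1 A| + |A\setminus\partial_e^1 A| \le |\partial_e^1 A| + L \le 2\,|\partial_e^1 A|,
\]
so $|\partial_e^1 A|\ge\frac12|A|\ge\frac16|A|$.

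The delicate points are, for the first part, the sign bookkeeping: one must verify that a boundary vertex contributes $-1$ exactly when it is non-essential, which rests on the equivalence ``$b$ is non-essential $\iff$ $b$ is the parent of its unique $A$-neighbour'', itself a consequence of $a_0$ being the closest point of $A$ to $v$. For the second part the only subtlety is the status of the root $a_0$ and of $v$ (which need not have two children); treating these one or two vertices separately changes the leaf-counting inequality by at most a constant and does not affect the conclusion. I would also note that the method leaves comfortable room: retaining the factor $|N(x)|$ in the first step already forces $|\partial_e A|\ge|A|$, and the counting in the second step yields $|\partial_e^1 A|\ge\frac12|A|$, so the stated constants $\frac13$ and $\frac16$ hold with a wide margin.
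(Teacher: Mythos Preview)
Your proof is correct and takes a genuinely different route from the paper's, so a brief comparison is in order.

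For the first inequality, the paper simply reduces to Lemma~\ref{l1:caract_complete}: if $\partial_{ne}A=\{v_0\}$, it passes to the subtree $T'$ of $T\setminus\{v_0\}$ containing $A$, re-roots at the neighbour $v'$ of $v_0$, observes that $(T',v')$ is again $1$-pseudo-regular, and applies Lemma~\ref{l1:caract_complete} there, where $\partial_{T'}A=\partial_e A$. Your approach instead refines the Green's-formula computation by keeping signs, obtaining the exact identity $\sum_{x\in A}(\Delta f)(x)|N(x)|=|\partial_e A|-|\partial_{ne}A|$. This is slicker and, as you note, actually yields $|\partial_e A|\ge |A|$ since $(\Delta f)(x)|N(x)|\ge 1$ for every $x$.

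For the second inequality the contrast is sharper. The paper builds an auxiliary $1$-pseudo-regular tree $(T'',v)$ by keeping only the geodesics $[va_j]$ for $a_j\in\partial_e^1A$ and gluing a binary tree at each $a_j$; in $T''$ one has $|\partial_{e,T''}A|=2|\partial_e^1A|$, and the first inequality applied in $T''$ gives $\frac16$. Your argument is purely combinatorial on the finite rooted tree $G(A)$: every leaf lies in $\partial_e^1A$, and every vertex of $A\setminus\partial_e^1A$ (except possibly $v$) is a branching vertex, so the standard bound $(\text{branching})\le L-1$ gives $|A\setminus\partial_e^1A|\le L\le|\partial_e^1A|$ and hence $|\partial_e^1A|\ge\frac12|A|$. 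This avoids any auxiliary construction and again improves the constant. The one point to state cleanly is the edge case $a_0=v$ with $|N(v)|=1$ (the definition of $1$-pseudo-regular only constrains vertices at positive level), which you correctly flag; it costs exactly the ``$+1$'' absorbed by $L-1+1=L$.
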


\begin{proof}
Let us prove the first inequality.
If $\partial_{ne} A = \emptyset$, then $\partial_{e} A = \partial A$ and Lemma \ref{l1:caract_complete} gives $\frac{|\partial_e A|}{|A|}\geq \frac{1}{3}$.
Assume now $\partial_{ne} A \neq \emptyset$, and thus there is $v_0\in V(T)$ with $\partial_{ne} A=\{v_0\}$.
Since $G(A)$ is connected, $A$ is contained in a connected component $T'$ of $T \setminus \{v_0\}$.
If $v':=N(v_0) \cap V(T')$, then $(T',v')$ is a $1$-pseudo-regular rooted tree.
Applying Lemma \ref{l1:caract_complete} to $A\subset V(T')$, we obtain
$$
\frac{1}{3}
\le \frac{|\partial_{T'} A|}{|A|}
= \frac{|\partial A \setminus \{v_0\}|}{|A|}
= \frac{|\partial_e A|}{|A|}\,.
$$

We prove now the second inequality.
Let $\partial_e^1 A = \{a_1,\dots,a_r\}$ and $T_0=\cup_{j=1}^r [va_j]$; thus, $A\subseteq V(T_0)$.
For each $1\le j \le r$, consider a geodesically complete rooted tree $(T_j,v_j)$ with $|N(v_j)|=2$ and $|N(v)|=3$ for every $v\in V(T_j)\setminus \{v_j\}$.
Let $(T'',v)$ be the rooted tree obtained from $T_0,T_1,\dots,T_r,$ by identifying $a_j$ with $v_j$ for each $1\le j \le r$.
By construction, $(T'',v)$ is a $1$-pseudo-regular rooted tree.
Since $|N(a_j)|=3$ for each $1\le j \le r$, we have $|\partial_{e,T''} A|=2r=2|\partial_e^1 A|$, and
$$
\frac{1}{3}
\le \frac{|\partial_{e,T''} A|}{|A|}
= \frac{2|\partial_e^1 A|}{|A|}\,.
$$
\end{proof}

The following fact is elementary.

\begin{lemma} \label{l4:caract_complete} Let $(T,v)$ be a geodesically complete rooted tree, $A_0\subset V(T)$ a finite set with $G(A_0)$ connected and $\max \{ d(a,v) \,|\; a \in A_0 \} < K$.
Then $|A_0|\leq 1+(K-1)|\p_e A_0|\leq K|\p_e A_0|$.
\end{lemma}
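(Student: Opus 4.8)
The plan is to root the subtree $G(A_0)$ at its vertex closest to $v$ and to decompose it into the geodesics running from that root to its leaves, controlling both the number of leaves and their depth. The second inequality will come for free once I know $\partial_e A_0\neq\emptyset$.

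First I would let $a^*$ be the vertex of $A_0$ minimizing the distance to $v$ (so $a^*=v$ when $v\in A_0$). Since $G(A_0)$ is connected and $T$ is a tree, $a^*$ is unique and the geodesic $[va]$ passes through $a^*$ for every $a\in A_0$; hence in the rooting of $G(A_0)$ at $a^*$ every vertex of $A_0$ is a descendant of $a^*$, and $d(a^*,a)=d(v,a)-d(v,a^*)\le d(v,a)\le K-1$ for all $a\in A_0$ (distances are integers, so $d(v,a)<K$ gives $d(v,a)\le K-1$). Next I would relate the leaves of $(A_0,a^*)$ to $\partial_e A_0$: for each leaf $\ell$ (a vertex of $A_0$ with no child in $A_0$), geodesic completeness lets me extend $[v\ell]$ to an infinite ray, whose first vertex $w$ beyond $\ell$ satisfies $d(v,w)=d(v,\ell)+1$ and cannot lie in $A_0$ (else $\ell$ would have a child), so $w\in\partial A_0$; since $d(v,w)>d(v,\ell)$ with $\ell\in A_0$, the vertex $w$ is essential. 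The assignment $\ell\mapsto w$ is injective because each boundary vertex has exactly one neighbour in $A_0$ (as $T$ is a tree and $G(A_0)$ is connected, the fact already used in the proof of Lemma \ref{l1:caract_complete}). Consequently the number $L$ of leaves satisfies $L\le|\partial_e A_0|$, and $L\ge 1$ forces $|\partial_e A_0|\ge 1$.

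Then I would decompose $A_0$ as $\bigcup_{j=1}^{L}[a^*\ell_j]$ over its leaves $\ell_1,\dots,\ell_L$: each geodesic lies in $A_0$ because $G(A_0)$ is connected, and conversely every $u\in A_0$ is an ancestor of some leaf and so lies on the corresponding geodesic. Counting with $a^*$ shared by all geodesics,
\[
|A_0|\le 1+\sum_{j=1}^{L}\big(|[a^*\ell_j]|-1\big)=1+\sum_{j=1}^{L}d(a^*,\ell_j)\le 1+L(K-1)\le 1+(K-1)|\partial_e A_0|,
\]
which is the first inequality. The second inequality $1+(K-1)|\partial_e A_0|\le K|\partial_e A_0|$ is equivalent to $|\partial_e A_0|\ge 1$, already established above.

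I expect the only delicate points to be the verification that every vertex of $A_0$ is a descendant of $a^*$ and lies on some root-to-leaf geodesic, together with the injectivity of $\ell\mapsto w$; these rest on the tree structure, the connectedness of $G(A_0)$, and geodesic completeness, which is precisely what guarantees that every leaf of $A_0$ spawns an essential boundary vertex. The union bound and the arithmetic of the second inequality are then routine.
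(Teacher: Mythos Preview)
Your argument is correct. The paper itself gives no proof of this lemma, stating only that ``the following fact is elementary''; your decomposition of the subtree $G(A_0)$ into root-to-leaf geodesics together with the injective assignment of leaves to essential boundary vertices (via geodesic completeness) is exactly the natural way to verify it, and all the steps you flag as delicate are handled correctly.
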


\begin{proposition}\label{p:caract_complete} If $(T,v)$ is a geodesically complete rooted tree, then $h(T)>0$ if and only if $(T,v)$ is pseudo-regular.
Furthermore, if $(T,v)$ is $K$-pseudo-regular, then $h(T)\geq \frac{1}{7K}$.
\end{proposition}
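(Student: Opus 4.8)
The plan is to prove the two implications separately, folding the quantitative bound into the forward direction.

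For the easy direction I would argue the contrapositive: if $(T,v)$ is not pseudo-regular, then $h(T)=0$. Unwinding the definition, for every $K\in\NN$ there are a level $t$ and a vertex $a\in S(v,t)$ with at most one point in $S(v,t+K)\cap T^v_a$. Since $T$ is geodesically complete every vertex has a child, hence a descendant on every deeper sphere, so ``at most one'' forces ``exactly one'': the subtree $T^v_a$ must be a single non-branching path $a=w_0,w_1,\dots,w_K$ in which each $w_i$, $0\le i\le K-1$, has exactly one child. Then $w_1,\dots,w_{K-1}$ all have degree $2$, so $A:=\{w_1,\dots,w_{K-1}\}$ satisfies $\p A=\{w_0,w_K\}$ and $|A|=K-1$, giving $|\p A|/|A|=2/(K-1)$. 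Letting $K\to\infty$ yields $h(T)=0$.

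For the forward direction I would first reduce to connected sets. Given a finite $A$, let $A_1,\dots,A_m$ be the vertex sets of the connected components of $G(A)$. The key observation is that in a tree the essential (``downward'') boundaries $\p_e A_i$ are pairwise disjoint and all contained in $\p A$: a vertex of $\p_e A_i$ is a non-$A_i$ child of some vertex of $A_i$, its unique parent pins down the component, and it cannot lie in $A$ (else it would connect two components). Hence $|\p A|\ge \sum_i|\p_e A_i|$, so it suffices to prove $|\p_e A_i|\ge \frac1{7K}|A_i|$ for each piece. Re-rooting at the vertex $t_i$ of $A_i$ closest to $v$, the tree $(T^v_{t_i},t_i)$ is again $K$-pseudo-regular and geodesically complete, so I may assume $A$ is connected, contains the root $v$ as its top vertex, and $\p A=\p_e A$.

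The core estimate for such an $A$ I would obtain by combining a short-range and a coarse count. Slicing $A$ into blocks of $K$ consecutive spheres, each component $C$ of a slice is connected with all vertices within $K$ consecutive spheres, so Lemma \ref{l4:caract_complete} gives $|C|\le K|\p_e C|$; summing and sorting the resulting boundary pieces into genuine boundary of $A$ and interfaces between consecutive blocks, the genuine pieces reassemble to $\p_e A$ and the interfaces are exactly the vertices of $A^*:=A\cap\bigcup_{j}S(v,jK)$ other than $v$, yielding $|A|\le K|\p_e A|+K(|A^*|-1)$. To bound $|A^*|$, I pass to the coarse tree $T^*$ whose vertices are the spheres $S(v,jK)$; it is $1$-pseudo-regular and geodesically complete and $A^*$ is connected in it, so Lemma \ref{l2:caract_complete} gives $|A^*|\le 6\,|\p_e^1 A^*|$. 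Sending each vertex of $\p_e^1 A^*$ to the boundary vertex of $A$ at which the exiting geodesic leaves $A$ defines a map $\p_e^1 A^*\to \p_e A$ that is injective, because the candidate preimages of a fixed boundary vertex all lie among its ancestors in a window of $K$ consecutive levels, which contains a single multiple of $K$. Hence $|A^*|\le 6|\p_e A|$, and combining gives $|A|\le K|\p_e A|+6K|\p_e A|=7K|\p_e A|$, i.e. $|\p_e A|/|A|\ge \frac1{7K}$, as claimed (so $7=1+6$).

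I expect the main obstacle to be the two pieces of combinatorial bookkeeping rather than any hard inequality: verifying that the essential boundaries of the components are disjoint and exhaust the downward boundary, so that the reduction to connected sets loses nothing, and verifying that the slice decomposition reassembles correctly while the coarse-to-fine charging map is genuinely injective. Both rest only on the tree structure and on the elementary fact that a window of $K$ consecutive levels meets the multiples of $K$ exactly once; once these are in place the bound $h(T)\ge\frac1{7K}$ follows, and together with the bare-path construction it yields the stated equivalence.
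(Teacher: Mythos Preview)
Your proposal is correct and follows essentially the same strategy as the paper: reduce to connected $A$ containing the root via disjointness of the essential boundaries of the components, pass to the coarse $1$-pseudo-regular tree on $\bigcup_j S(v,jK)$, apply Lemma \ref{l2:caract_complete} to obtain $|A^*|\le 6|\p_e^1 A^*|$, and inject $\p_e^1 A^*\hookrightarrow \p_e A$ using that a window of $K$ consecutive levels contains a single multiple of $K$. The only difference is bookkeeping: the paper splits $A$ into the ``spine'' $T(A)=\bigcup\{[vu]:u\in A\cap S_{nK}\}$ (bounded by $K|A'|$ via monotonicity of level-counts) and residual branches (bounded via Lemma \ref{l4:caract_complete}), whereas you slice $A$ horizontally into level-blocks of width $K$ and apply Lemma \ref{l4:caract_complete} to each component; both decompositions produce the same $7K=K+6K$ estimate, and the easy direction is identical to the paper's.
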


\begin{proof} Suppose $T$ is $K$-pseudo-regular.
Let us define $S_n=S(v,n)$ for each $n\ge 0$.
Since $(T,v)$ is geodesically complete, we have $|S_n| \le |S_{n+1}|$.

Fix a finite set $A\subset V(T)$ with $G(A)$ connected.
By simplicity in the notation, assume that $v\in A$.
Let us define
$$
\begin{aligned}
n_0
& := \max \big\{ n\ge 0 \,|\; S_{nK} \cap A\neq \emptyset \big\},
\\
T(A)
& := \cup_{n=0}^{n_0} \big\{ [vu] \,|\; u \in S_{nK} \cap A \big\}.
\end{aligned}
$$
Let $T^1(A),\dots,T^s(A)$ be the connected components of the closure of $G(A) \setminus T(A)$.
Thus, $T^i(A)$ is a tree with $|T^i(A)\cap T(A)|=1$ for each $1 \le i \le s$.
Furthermore, if $T^i(A)\cap T(A)=\{a^i\}$, then $a^i \neq a^j$ for $i \neq j$ and $\max \{ d(a,a^i) \,|\; a \in V(T^i(A)) \} < K$.
Hence, Lemma \ref{l4:caract_complete} gives $|V(T^i(A))|\leq K|\p_e V(T^i(A))|$.
Since $\p_e V(T^i(A)) \subset \p_e A$ and $\p_e V(T^i(A)) \cap \p_e V(T^j(A)) = \emptyset$ for $i \neq j$, we have
$$
\sum_{i=1}^s |V(T^i(A))|
\leq K \sum_{i=1}^s |\p_e V(T^i(A))|
\leq K |\p_e A|.
$$
We are going to bound $|V(T(A))|$.
If $n_0\ge 1$, then
$$
|V(T(A))|
= 1 + \sum_{n=1}^{n_0} \sum_{j=1}^{K} |V(T(A))\cap S_{(n-1)K+j}|
\le 1+ K \sum_{n=1}^{n_0} |V(T(A))\cap S_{nK}|.
$$
Let us define a tree $T'$ with vertices $\cup_{n\in \mathbb{N}\cup \{0\}}S_{nK}$ and such that given $a_n\in S_{nK}, a_{n+1}\in S_{(n+1)K}$, $a_na_{n+1}$ defines an edge if and only if
$a_{n+1}\in T^v_{a_n}$.
Thus, $(T',v)$ is a $1$-pseudo-regular rooted tree.
If we define the set $A'\subset V(T')$ as $A'= A\cap V(T')$, then
$$
|V(T(A))|
\le K \Big( 1+ \sum_{n=1}^{n_0} |V(T(A))\cap S_{nK}| \Big)
= K \Big( 1+ \sum_{n=1}^{n_0} |A'\cap S_{nK}| \Big)
= K |A'| .
$$
Lemma \ref{l2:caract_complete} gives $|A'| \le 6|\p_{e,T'}^1 A'|$.
We define a map $F:\p_{e,T'}^1 A' \rightarrow \p_e A$ as follows.
For each $a \in \p_{e,T'}^1 A'$ there exists $1\le n \le n_0$ with $a\in A\cap S_{nK}$
and $A\cap T_a^v \cap S_{(n+1)K}=\emptyset$.
Thus, we can choose $a' \in \p_e A \cap T_a^v$ and define $F(a)=a'$.
Since $F$ is an injective map, we have
$$
|V(T(A))|
\le K |A'|
\le 6K |\p_{e,T'}^1 A'|
\le 6K |\p_e A| .
$$
Note that $|V(T(A))| \le 6K |\p_e A|$ trivially holds if $n_0=0$, since in this case $|V(T(A))| = 1$ and $|\p_e A|\ge 1$.
Hence, we conclude in any case
$$
|A|
\le |V(T(A))|+\sum_{i=1}^s |V(T^i(A))|
\le 6K |\p_e A| + K |\p_e A|
= 7K |\p_e A|.
$$
The same argument proves that $|A|\le 7K |\p_e A|$ holds for every finite set $A\subset V(T)$ with $G(A)$ connected and $v\notin A$.

Fix a finite set $A\subset V(T)$.
We can write $A= A_1 \cup \cdots \cup A_r$, where $G(A_1), \dots , G(A_r)$ are the connected components of $G(A)$.
We have proved $|A_j| \le 7K |\p_e A_j|$ for $1 \le j \le r$.
Since $T$ is a tree, the sets $\{\p_e A_1, \dots , \p_e A_r\}$ are pairwise disjoint and $\p_e A_1 \cup \cdots \cup \p_e A_r \subseteq \p A$, and we conclude
$$
|A|
= \sum_{j=1}^{r} |A_j|
\le \sum_{j=1}^{r} 7K |\p_e A_j|
\le 7K |\p A|.
$$
Since $A$ is an arbitrary finite set in $V(T)$, we conclude $h(T)\geq \frac{1}{7K}$.

Let us assume now that $T$  is not pseudo-regular, this is, for every $K \in \NN$, there exists a vertex
$a_K\in V(T)$ such that $T_{a_K}^v \cap S(v,d(v,a_K)+K)$ has just one vertex.
Therefore, since $(T,v)$ is a geodesically complete rooted tree, $A^K:=T_{a_K}^v \cap B(v,d(v,a_K)+K)$ satisfies $|A^K| = K$ and $|\partial A^K| =2$.
Thus, $\frac{|\partial A^K|}{|A^K|} = \frac{2}{K}$  for every $K \in \NN$ and we conclude $h(T)=0$.
\end{proof}

\begin{remark} Let $T_k$ be the homogeneous $k$-tree with $k\geq 3$ and fix any vertex $v$ in $T_k$. It is immediate to see that $(T_k,v)$ is $1$-pseudo-regular.
Therefore, by Proposition \ref{p:caract_complete}, $h(T_k) \ge \frac17 >0$.
\end{remark}

As usual, we say a graph $\G$ is \emph{infinite} if $| V(\G) | = \infty$, and that it is \emph{unbounded} if $\diam \G = \infty$.
If $\G$ is a graph with $|N(v)|<\infty$ for every $v\in V(\G)$ (in particular, if $\G$ is uniform), then $\G$ is unbounded if and only if it is infinite.

Let us consider an unbounded rooted tree $(T,v)$, its maximal geodesically complete subtree $(T_\infty,v)$ and $C\in \NN$.
We say that $(T,v)$ is $C$-\emph{complemented} if $|V(T_0)| \le C$ for every connected component $T_0$ of the closure of $T \setminus T_\infty$.
$(T,v)$ is \emph{complemented} if it is $C$-complemented for some $C\in \NN$.
Notice that this implies that $v$ is a pole.
In fact, for uniform trees, being complemented is equivalent to having a pole.

\smallskip

The previous results allow to prove the main result in this section,
which characterizes the trees verifying the isoperimetric inequality.

\begin{theorem} \label{t:iitreefinal}
Let $(T,v)$ be an unbounded rooted tree.
Then $h(T)>0$ if and only if $(T_\infty,v)$ is pseudo-regular and $(T,v)$ is complemented.
Furthermore, if $(T_\infty,v)$ is $K$-pseudo-regular and $(T,v)$ is $C$-complemented, with $K,C\ge 1$, then
$$
h(T_\infty)
\ge h(T)
\ge \frac{h(T_\infty)}{C + (C-1)h(T_\infty)}
\ge \frac{1}{(7K+1)C -1}\,.
$$
\end{theorem}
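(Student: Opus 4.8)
The plan is to establish the two middle inequalities directly and to read off the outer two from results already available. Throughout, write $h_\infty:=h(T_\infty)$, let $\{T_p\}$ denote the connected components of the closure of $T\setminus T_\infty$ (each a finite tree with $|V(T_p)|\le C$ meeting $V(T_\infty)$ in the single attaching vertex $p$, the attaching vertices being pairwise distinct), and for a finite $A\subseteq V(T)$ put $A':=A\cap V(T_\infty)$ and $A'':=A\setminus V(T_\infty)$. The rightmost inequality is immediate: since $T_\infty$ is geodesically complete and $K$-pseudo-regular, Proposition \ref{p:caract_complete} gives $h_\infty\ge\frac1{7K}$, and the function $t\mapsto\frac{t}{C+(C-1)t}$ is increasing on $(0,\infty)$ (its derivative equals $C/(C+(C-1)t)^2>0$), so substituting $t=\frac1{7K}$ yields $\frac{h_\infty}{C+(C-1)h_\infty}\ge\frac{1/(7K)}{C+(C-1)/(7K)}=\frac1{(7K+1)C-1}$.

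For the leftmost inequality $h_\infty\ge h(T)$ I would saturate backbone sets by their pendant branches. Given a finite $A\subseteq V(T_\infty)$, set $\bar A:=A\cup\bigcup_{p\in A}V(T_p)$, which is finite because $(T,v)$ is $C$-complemented. Since each $T_p$ meets the rest of $T$ only through $p$, one checks that $\partial\bar A=\partial_{T_\infty}A$ (a vertex of $\partial\bar A$ cannot lie in a pendant branch, and the backbone boundary is unchanged), while $|\bar A|\ge|A|$. Hence $\frac{|\partial\bar A|}{|\bar A|}\le\frac{|\partial_{T_\infty}A|}{|A|}$, and taking the infimum over $A$ gives $h(T)\le h_\infty$.

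The heart of the argument is the inequality $h(T)\ge\frac{h_\infty}{C+(C-1)h_\infty}$, which I would prove by showing that every finite $A\subseteq V(T)$ satisfies $|A|\le\bigl(\tfrac{C}{h_\infty}+(C-1)\bigr)|\partial A|$; note $\frac1{\frac{C}{h_\infty}+(C-1)}=\frac{h_\infty}{C+(C-1)h_\infty}$. First, every $b\in\partial_{T_\infty}A'$ lies in $\partial A$ (it is a backbone vertex adjacent to $A'\subseteq A$ but not in $A$), so $|\partial_{T_\infty}A'|\le|\partial A|$; combined with $|\partial_{T_\infty}A'|\ge h_\infty|A'|$ this gives $|A'|\le\frac1{h_\infty}|\partial A|$. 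Next I bound the branch part $A''=\bigsqcup_p D_p$, where $D_p:=A\cap(V(T_p)\setminus\{p\})$ has $|D_p|\le C-1$. The attaching points $p$ with $p\in A'$ number at most $|A'|$, contributing at most $(C-1)|A'|$. For an attaching point $p$ with $D_p\neq\emptyset$ but $p\notin A$, choose a vertex $x\in D_p$ nearest $p$ inside $T_p$; its neighbour $x^-$ toward $p$ is not in $A$, hence $x^-\in\partial A$, and since the vertex sets $V(T_p)$ are pairwise disjoint these $x^-$ are distinct, so there are at most $|\partial A|$ such $p$, contributing at most $(C-1)|\partial A|$. Therefore $|A''|\le(C-1)\bigl(|A'|+|\partial A|\bigr)$ and $|A|=|A'|+|A''|\le C|A'|+(C-1)|\partial A|\le\bigl(\tfrac{C}{h_\infty}+(C-1)\bigr)|\partial A|$, as desired.

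Finally I would assemble the equivalence. If $(T_\infty,v)$ is $K$-pseudo-regular and $(T,v)$ is $C$-complemented then $h_\infty>0$ by Proposition \ref{p:caract_complete} and the chain just proved forces $h(T)>0$. Conversely, if $(T,v)$ is not complemented then its pendant branches have unbounded size, and taking $A_N:=V(T_{p_N})\setminus\{p_N\}$ with $|V(T_{p_N})|\ge N$ gives $\partial A_N=\{p_N\}$ and $|A_N|\ge N-1$, so $\frac{|\partial A_N|}{|A_N|}\to0$ and $h(T)=0$; thus $h(T)>0$ implies $(T,v)$ complemented, whence $h_\infty\ge h(T)>0$ by the leftmost inequality, and since $T_\infty$ is geodesically complete Proposition \ref{p:caract_complete} shows $(T_\infty,v)$ is pseudo-regular. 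The step I expect to be most delicate is the branch-counting in the main inequality: one must charge the vertices of each partial pendant branch whose root lies outside $A$ to a \emph{distinct} boundary vertex, and the point making this work—rather than giving the naive and false bound $h(T)\ge h_\infty/C$—is precisely the ``dead-end'' branch (a full pendant branch whose boundary is a single vertex), which is what produces the additive $(C-1)$ term in the denominator.
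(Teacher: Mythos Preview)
Your proof is correct and follows essentially the same route as the paper. The only cosmetic difference lies in the middle inequality: you split the pendant branches according to whether the attaching vertex $p$ lies in $A$ (charging those to $|A'|$) or not (charging those to a distinct boundary vertex $x^-$), whereas the paper splits according to whether $\partial A\cap T_j\neq\emptyset$ (charging those to $|\partial A|$) or $V(T_j)\subseteq A$ (charging those to $|A\cap T_\infty|$); both partitions yield the identical bound $|A|\le\bigl(\tfrac{C}{h_\infty}+C-1\bigr)|\partial A|$, and the remaining steps (the saturation argument for $h_\infty\ge h(T)$, the monotonicity for the rightmost inequality, and the failure of complementation forcing $h(T)=0$) coincide with the paper's.
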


\begin{proof}
Let us prove first $h(T_\infty) \ge h(T)$.
Fix a finite set $A \subset V(T_\infty)$.
Let $T_1,\dots,T_r$ be the connected components of the closure of $T\setminus T_\infty$ with $T_j\cap A \neq \emptyset$.
Define $A'=\cup_{j=1}^r V(T_j) \cup A \subset V(T)$.
Thus,
$$
|A|
\le |A'|
\le h(T)^{-1} |\p A'|
= h(T)^{-1} |\p_{T_\infty} A|,
$$
and $h(T_\infty) \ge h(T)$.

Assume now that $(T_\infty,v)$ is $K$-pseudo-regular and $(T,v)$ is $C$-complemented.
Let $\{T_j\}_{j\in J}$ be the connected components of the closure of $T\setminus T_\infty$
and $\{v_j\}= T_j \cap T_\infty$ for each $j\in J$.
Note that $\{T_j\}_{j\in J}$ are pairwise disjoint.
Fix a finite set $A \subset V(T)$ and define
$$
\begin{aligned}
A_j
= A\cap \big( T_j \setminus \{v_j\} \big),
\qquad
& J_0
= \big\{ j\in J \,|\; A_j \neq \emptyset \big\},
\\
J_1
= \big\{ j\in J_0 \,|\; \p A \cap T_j \neq \emptyset \big\},
\qquad
& J_2
= J_0 \setminus J_1
= \big\{ j\in J_0 \,|\; V(T_j) \subseteq A \big\}.
\end{aligned}
$$
If $j\in J_1$, then
$$
\sum_{j\in J_1} |A_j|
\le \sum_{j\in J_1} (C-1)
\le \sum_{j\in J_1} (C-1) |\p A \cap T_j|
\le (C-1) |\p A|.
$$
Furthermore,
$$
\sum_{j\in J_2} |A_j|
\le \sum_{j\in J_2} (C-1)
= (C-1) \sum_{j\in J_2}|\{v_j\}|
\le (C-1) |A \cap T_\infty|.
$$
Since $h(T_\infty) \ge \frac{1}{7K}>0$ by Proposition \ref{p:caract_complete}, we have
$$
|A \cap T_\infty|
\le h(T_\infty)^{-1}|\p_{T_\infty}(A \cap T_\infty)|
\le h(T_\infty)^{-1}|\p A|.
$$
We conclude
$$
\begin{aligned}
|A|
& = |A \cap T_\infty| + \sum_{j\in J_1} |A_j| + \sum_{j\in J_2} |A_j|
\\
& \le |A \cap T_\infty| + (C-1)|\p A| + (C-1)|A \cap T_\infty|
\\
& \le \big(C \, h(T_\infty)^{-1}+C-1\big) |\p A|,
\end{aligned}
$$
and
$$
h(T)
\ge \frac{h(T_\infty)}{C + (C-1)h(T_\infty)}
\ge \frac{1}{(7K+1)C -1}
>0.
$$

Finally, assume that $h(T)> 0$.
Hence, $h(T_\infty) \ge h(T) > 0$ and Proposition \ref{p:caract_complete} gives that $(T_\infty,v)$ is pseudo-regular.
Since $(T_\infty,v)$ is the maximal geodesically complete subtree of $(T,v)$, any connected component of the closure of $T\setminus T_\infty$ is a finite tree.
Seeking for a contradiction, assume that $(T,v)$ is not complemented.
Given any $C\in \NN$, there exists a connected component $T_C$ of the closure of $T\setminus T_\infty$ with $|V(T_C)| > C$.
Let $v_C$ be the vertex with $\{v_C\} = T_C \cap T_\infty$ and $A_C=V(T_C) \setminus \{v_C\}$. Thus,
$$
\frac{|\p A_C|}{|A_C|}
=\frac{|\{v_C\}|}{|V(T_C) \setminus \{v_C\}|}
\le \frac{1}{C} \,.
$$
Since this inequality holds for every $C\in\NN$, we have $h(T) = 0$, which is a contradiction.
Hence, $(T,v)$ is complemented.
\end{proof}

Note that, since the condition $h(T)> 0$ is independent of $v$, we can choose any root $v$.

\smallskip

In the case of uniform unbounded trees, the characterization can be given in terms of the boundary at infinity.

\smallskip

Let us adapt the following definition from \cite{BS} where we introduce the constant $\varepsilon_0$ for convenience. Notice that for bounded metric spaces both definitions coincide. Since herein this property will be always applied to compact spaces all the results work as well with the original definition.

\begin{definition} Given a metric space $(X,d)$ and a constant $S>1$, we say that $(X,d)$ is \emph{$S$-uniformly perfect} if there exists some $\varepsilon_0>0$ such that for every $x\in X$ and every $0<\varepsilon \le \varepsilon_0$ there exist a point $y\in X$ such that
$\frac{\varepsilon}{S} <d(x,y) \le \varepsilon$. We say that $(X,d)$ is \emph{uniformly perfect} if there exists some $S$ such that $(X,d)$ is $S$-uniformly perfect.
\end{definition}

\begin{lemma} \label{lema: pseudo-regular} A metric space $(X,d)$ is uniformly perfect if and only if there is a constant $R>1$ and some $\varepsilon_0>0$ such that for every $x\in X$ and every $0<\varepsilon \le \varepsilon_0$ there exist at least two points $y_1,y_2\in X$ such that $d(x,y_i) \le \varepsilon$ for $i=1,2$ and $\frac{\varepsilon}{R} < d(y_1,y_2) $.
\end{lemma}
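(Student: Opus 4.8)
The plan is to prove both implications directly from the triangle inequality; there is no deep machinery here, so the entire content is the bookkeeping of the constants $S$, $R$ and $\varepsilon_0$. I would organize the proof as the two obvious directions of the stated equivalence.

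For the forward implication I would assume that $(X,d)$ is $S$-uniformly perfect with threshold $\varepsilon_0$ and show that the two-point condition holds with the \emph{same} $\varepsilon_0$ and with $R=S$. Fix $x\in X$ and $0<\varepsilon\le\varepsilon_0$. Uniform perfectness produces a point $y_1$ with $\varepsilon/S<d(x,y_1)\le\varepsilon$. The key observation is that the base point $x$ itself can serve as the second point: set $y_2:=x$. Then $d(x,y_2)=0\le\varepsilon$, while $d(y_1,y_2)=d(x,y_1)>\varepsilon/S=\varepsilon/R$, and $y_1\neq y_2$ since $d(x,y_1)>0$. This provides the two required points.

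For the reverse implication I would assume the two-point condition with constants $R$ and $\varepsilon_0$ and deduce $S$-uniform perfectness with $S=2R$ and the same $\varepsilon_0$. Fix $x$ and $0<\varepsilon\le\varepsilon_0$, and take the guaranteed points $y_1,y_2$ with $d(x,y_i)\le\varepsilon$ and $d(y_1,y_2)>\varepsilon/R$. The triangle inequality gives
\[
\frac{\varepsilon}{R} < d(y_1,y_2) \le d(x,y_1)+d(x,y_2),
\]
so at least one of $d(x,y_1),d(x,y_2)$ must exceed $\varepsilon/(2R)$ (otherwise their sum would be at most $\varepsilon/R$). Choosing that index $i$, the point $y_i$ satisfies $\varepsilon/(2R)<d(x,y_i)\le\varepsilon$, that is, $\varepsilon/S<d(x,y_i)\le\varepsilon$ with $S=2R$, which is exactly $S$-uniform perfectness.

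I do not expect any genuine obstacle. The only two points requiring a little care are, first, checking in the forward direction that the two produced points are actually distinct, which follows from the strict lower bound $d(x,y_1)>\varepsilon/S>0$; and second, verifying that the threshold $\varepsilon_0$ can be carried through unchanged in both directions, so that the same admissible range $0<\varepsilon\le\varepsilon_0$ is used throughout and no shrinking of $\varepsilon_0$ is needed.
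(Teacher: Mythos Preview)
Your proof is correct. The reverse implication (two-point condition $\Rightarrow$ uniformly perfect) is identical to the paper's argument, down to the constant $S=2R$.

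For the forward implication you take a genuinely different and simpler route. The paper, assuming $S$-uniform perfectness, first picks $y_1$ with $\varepsilon/S < d(x,y_1)\le \varepsilon$, then applies uniform perfectness again at the smaller scale $\varepsilon/S^2$ to pick $y_2$ with $\varepsilon/S^3 < d(x,y_2)\le \varepsilon/S^2$, and finally uses the reverse triangle inequality to get $d(y_1,y_2)>\tfrac{S-1}{S^2}\,\varepsilon$, yielding $R=S^2/(S-1)$. Your observation that one may simply take $y_2=x$ short-circuits this: one application of uniform perfectness suffices, and you even get the better constant $R=S$. Nothing in the statement forbids one of the two points from being $x$ itself, and you correctly note that $y_1\neq x$ because $d(x,y_1)>\varepsilon/S>0$. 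The paper's version has the minor aesthetic advantage that both $y_1,y_2$ are genuinely different from $x$, which matches more closely how the lemma is later used (e.g.\ in Proposition~\ref{p:equiv} and Theorem~\ref{t:iigraph}, where the $y_i$ are thought of as distinct branching directions), but your argument is logically complete and more economical.
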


\begin{proof} The if part is immediate with $S=2R$ and either $y=y_1$ or $y=y_2$.

Suppose that $(X,d)$ is $S$-uniformly perfect and consider $0<\varepsilon\le \varepsilon_0$. Then, there exist $y_1\in X$ such that $\frac{\varepsilon}{S} < d(x,y_1) \leq \varepsilon$ and
$y_2\in X$ such that $\frac{\varepsilon}{S^3} < d(x,y_2) \leq \frac{\varepsilon}{S^2}$. Therefore,
$d(y_1,y_2)\geq d(x,y_1)-d(x,y_2) >\frac{\varepsilon}{S}-\frac{\varepsilon}{S^2}= \frac{S-1}{S^2}\, \varepsilon.$ Thus, it suffices to consider $R=\frac{S^2}{S-1}$.
\end{proof}

\begin{remark}\label{r:e1}
It is easy to check that if $X$ is uniformly perfect with constants $S$ and $\varepsilon_0$, and $\varepsilon_0'$ is any positive constant,
then $X$ is also uniformly perfect with constants $S\max\big\{1,\frac{\varepsilon_0'}{\varepsilon_0}\big\}$ and $\varepsilon_0'$.
In a similar way, if the equivalent condition to uniform perfectness in
Lemma \ref{lema: pseudo-regular} holds with constants $R$ and $\varepsilon_0$,
then this condition holds with constants $R\,\max\big\{1,\frac{\varepsilon_0'}{\varepsilon_0}\big\}$ and $\varepsilon_0'$.
\end{remark}

\begin{proposition}\label{p:equiv} A geodesically complete rooted tree $(T,v)$
is pseudo-regular if and only if $end(T,v)$ is uniformly perfect.
\end{proposition}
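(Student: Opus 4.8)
The plan is to set up a precise dictionary between the combinatorial branching of $(T,v)$ and the metric structure of $end(T,v)$, and then read both the pseudo-regularity condition and the two-point reformulation of uniform perfectness (Lemma \ref{lema: pseudo-regular}) through this dictionary. The basic observations are: since $(T,v)$ is geodesically complete, every vertex lies on some ray $F\in end(T,v)$, and for a vertex $a\in S(v,t)$ the set $T^v_a$ corresponds exactly to the set of ends passing through $a$, that is, to the ball $\{G:d_v(F,G)\le e^{-t}\}$ whenever $F$ passes through $a$; indeed $d_v(F,G)\le e^{-t}$ iff $(F|G)_v\ge t$ iff $F(t)=G(t)=a$. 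Moreover, because $T$ is a tree with unit edges, the Gromov product $(F|G)_v$ is always a nonnegative integer (the distance from $v$ to the last common vertex). Thus two distinct ends through $a$ separate at an integer distance, and two vertices $y_1,y_2\in S(v,t+K)\cap T^v_a$ with $y_1\ne y_2$ correspond to two ends $G_1,G_2$ through $a$ with $t\le (G_1|G_2)_v\le t+K-1$, i.e. with $e^{-(t+K)}<d_v(G_1,G_2)\le e^{-t}$.

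For the forward implication, I would assume $(T,v)$ is $K$-pseudo-regular and verify the condition of Lemma \ref{lema: pseudo-regular} with $\varepsilon_0=e^{-1}$. Given $F\in end(T,v)$ and $0<\varepsilon\le e^{-1}$, set $t=\lceil -\ln\varepsilon\rceil\ge 1$, so that $\varepsilon/e<e^{-t}\le\varepsilon$, and take $a=F(t)\in S(v,t)$. Pseudo-regularity produces two vertices $y_1\ne y_2$ in $S(v,t+K)\cap T^v_a$; extending them to ends $G_1,G_2$ by geodesic completeness, the dictionary gives $d_v(F,G_i)\le e^{-t}\le\varepsilon$ and $d_v(G_1,G_2)>e^{-(t+K)}>\varepsilon\,e^{-(K+1)}$. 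Hence the two-point condition holds with $R=e^{K+1}$, and $end(T,v)$ is uniformly perfect.

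For the converse, assume $end(T,v)$ is uniformly perfect. Using Lemma \ref{lema: pseudo-regular} together with Remark \ref{r:e1}, I would first renormalise the scale so that the two-point condition holds with $\varepsilon_0=1$ and some constant $R>1$; set $K=\lceil\ln R\rceil$. Given $t\in\NN$ and $a\in S(v,t)$, choose a ray $F$ through $a$ and apply the condition at $x=F$ with $\varepsilon=e^{-t}\le 1$: this yields ends $y_1,y_2$ with $d_v(F,y_i)\le e^{-t}$ and $d_v(y_1,y_2)>e^{-t}/R$. The first bound forces $y_1,y_2$ to pass through $a$, while the second gives $(y_1|y_2)_v<t+\ln R\le t+K$, hence $(y_1|y_2)_v\le t+K-1$ by integrality; therefore $y_1(t+K)\ne y_2(t+K)$ are two distinct vertices of $S(v,t+K)\cap T^v_a$, proving $K$-pseudo-regularity.

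The only genuinely delicate point is matching the ranges of the parameters: pseudo-regularity quantifies over all integers $t$ (equivalently over the discrete scales $\varepsilon=e^{-t}$), whereas uniform perfectness quantifies over a continuum of $\varepsilon$ but only below a threshold $\varepsilon_0$. I expect the main obstacle to be bookkeeping here rather than any conceptual difficulty: in the forward direction one passes from an arbitrary $\varepsilon$ to the nearest discrete scale $e^{-t}$ at the cost of a factor $e$ in the constant, and in the converse one must guarantee that the relevant scales $e^{-t}$ all lie in $(0,\varepsilon_0]$, which is exactly what the freedom to renormalise $\varepsilon_0$ to $1$ via Remark \ref{r:e1} provides. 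Care is also needed with strict versus non-strict inequalities when translating ``two points at distance $t+K$'' into a bound on $(G_1|G_2)_v$, but the integrality of Gromov products in a unit-edge tree removes any ambiguity.
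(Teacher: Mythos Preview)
Your proof is correct and follows essentially the same route as the paper's: translate pseudo-regularity of $(T,v)$ into the metric structure of $end(T,v)$ via the identity $d_v(F,G)=e^{-(F|G)_v}$, using geodesic completeness to realise vertices by rays. The only differences are cosmetic: in the forward direction the paper verifies the one-point definition of $S$-uniform perfectness (with $S=e^{K}$) rather than the two-point reformulation of Lemma~\ref{lema: pseudo-regular}, and in the converse the paper does not explicitly invoke Remark~\ref{r:e1} to renormalise $\varepsilon_0$, whereas you do --- your treatment of this bookkeeping is in fact slightly more careful.
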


\begin{proof} Suppose $(T,v)$ is $K$-pseudo-regular. Given any point $F\in end(T,v)$ and any $\varepsilon>0$,
let $a=F(\lceil-\ln(\varepsilon)\rceil)$, where $\lceil x \rceil$ denotes the upper integer part of $x$, i.e., the smallest integer greater or equal than $x$.
Then, there exist $y_1,y_2\in S(v,\lceil-\ln(\varepsilon)\rceil+K)\cap T^v_a$.  Since $T$ is
geodesically complete, there exist $G_1,G_2\in end(T,v)$ with $G_i(\lceil-\ln(\varepsilon)\rceil+K)=y_i$ and at least one of them, let us assume $G_1$, satisfies that $G_1(\lceil-\ln(\varepsilon)\rceil+K) \neq F(\lceil-\ln(\varepsilon)\rceil+K)$. Since $y_1\in T^v_a$, thus
$(F|G_1)_v \ge \lceil-\ln(\varepsilon)\rceil \ge-\ln(\varepsilon)$ and $d_v(F,G_1)\leq \varepsilon$.
Since $G_1(\lceil-\ln(\varepsilon)\rceil+K) \neq F(\lceil-\ln(\varepsilon)\rceil+K)$, then $(F|G_1)_v \le \lceil-\ln(\varepsilon)\rceil + K-1 <-\ln(\varepsilon)+K$ and
$\frac{\varepsilon}{e^K} = e^{\ln(\varepsilon)-K} < d_v(F,G_1)$.
Hence, $end(T,v)$ is $e^K$-uniformly perfect.

Suppose that $end(T,v)$ is uniformly perfect and suppose $R>1$ satisfying the condition in Lemma \ref{lema: pseudo-regular}.
Consider any $a\in V(T)$.
Since $T$ is geodesically complete, there exist some $F\in  end(T,v)$ and some $t_0\geq 0$ such that $F(t_0)=a$. Thus, $a \in S(v,t_0)$. Let $\varepsilon:=e^{-t_0}$.
Then, there exist at least two points $G_1,G_2\in end(T,v)$ such that $d_v(F,G_i) \le \varepsilon$ for $i=1,2$ and $\frac{\varepsilon}{R} < d_v(G_1,G_2) $.
Since $d_v(F,G_i) \le \varepsilon$, $G_i(t)\in T_a^v$ for every $t\geq t_0$ and $i=1,2$. Also, since $d_0:=\frac{\varepsilon}{R} < d_v(G_1,G_2) $, $G_1(-\ln(d_0))=G_1(t_0+\ln(R))\neq G_2(t_0+\ln(R))=G_2(-\ln(d_0))$.
If we define $y_i=G_i(t_0+ \lceil \ln(R) \rceil )$ for $i=1,2,$ then $y_1 \neq y_2$.
Therefore, $(T,v)$ is $\lceil \ln(R) \rceil$-pseudo-regular.
\end{proof}

Theorem \ref{t:iitreefinal} and Proposition \ref{p:equiv} allow to obtain the following characterization of uniform trees with isoperimetric inequality.

\begin{theorem} \label{t:iitreeuni}
Let $(T,v)$ be a uniform infinite rooted tree.
Then $h(T)>0$ if and only if $v$ is a pole of $T$ and $end(T,v)$ is uniformly perfect.
\end{theorem}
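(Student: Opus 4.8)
The plan is to deduce this characterization from the general Theorem~\ref{t:iitreefinal} by translating each of its two conditions into one of the two boundary conditions in the statement. First I check that the hypotheses permit us to invoke Theorem~\ref{t:iitreefinal}: since $T$ is uniform, $|N(w)|<\infty$ for every $w\in V(T)$, and as noted above such a tree is unbounded precisely when it is infinite. Thus our $(T,v)$ is an unbounded rooted tree, Theorem~\ref{t:iitreefinal} applies, and it gives that $h(T)>0$ if and only if $(T_\infty,v)$ is pseudo-regular and $(T,v)$ is complemented. It therefore remains to show the two equivalences: ``$(T_\infty,v)$ pseudo-regular'' $\Longleftrightarrow$ ``$end(T,v)$ uniformly perfect'', and ``$(T,v)$ complemented'' $\Longleftrightarrow$ ``$v$ is a pole of $T$''.

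The first equivalence is purely formal and uses no uniformity. The subtree $T_\infty$ is geodesically complete by construction, so Proposition~\ref{p:equiv} applies and yields that $(T_\infty,v)$ is pseudo-regular if and only if $end(T_\infty,v)$ is uniformly perfect; and by Proposition~\ref{p:end_complete} we have $end(T_\infty,v)=end(T,v)$. Chaining these gives exactly that $(T_\infty,v)$ is pseudo-regular if and only if $end(T,v)$ is uniformly perfect.

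The second equivalence is the only point at which the uniformity hypothesis is genuinely used, and it is the step I expect to require the most care. That complemented implies $v$ is a pole is immediate and needs no uniformity: if $(T,v)$ is $C$-complemented, then every vertex of a hanging component $T_0$ of the closure of $T\setminus T_\infty$ lies within distance $C$ of its unique attaching vertex $v_0\in T_\infty$, and $v_0$ lies on a geodesic ray from $v$, so $v$ is a pole with constant $C$. For the converse I would exploit $\mu$-uniformity to bound the size of each hanging component. If $v$ is a pole with constant $M$, then for any such component $T_0$ attached at $v_0$, each $x\in T_0$ satisfies $d(x,T_\infty)=d(x,v_0)$, because in a tree every path from $x$ to $T_\infty$ passes through $v_0$; the pole condition then forces $d(x,v_0)\le M$, whence $T_0\subseteq \bar B(v_0,M)$. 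Since $T$ is $\mu$-uniform, $|V(T_0)|\le \frac{\mu^{M+1}-1}{\mu-1}$, a bound depending only on $\mu$ and $M$; calling it $C$, the tree $(T,v)$ is $C$-complemented. This is precisely the equivalence already announced in the remark preceding Theorem~\ref{t:iitreefinal}, so one may alternatively just cite it.

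Assembling the three steps, $h(T)>0$ if and only if $(T_\infty,v)$ is pseudo-regular and $(T,v)$ is complemented, which in turn holds if and only if $end(T,v)$ is uniformly perfect and $v$ is a pole of $T$. This is the claimed characterization.
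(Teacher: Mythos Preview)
Your proof is correct and follows exactly the route the paper intends: the paper's own proof is a one-line invocation of Theorem~\ref{t:iitreefinal} and Proposition~\ref{p:equiv}, and you have simply unpacked the translation of the two conditions, including the remark (stated in the paper just before Theorem~\ref{t:iitreefinal}) that for uniform trees ``complemented'' is equivalent to ``has a pole''. The extra details you supply (uniform $+$ infinite $\Rightarrow$ unbounded, the use of Proposition~\ref{p:end_complete}, and the explicit bound $|V(T_0)|\le\frac{\mu^{M+1}-1}{\mu-1}$) are all correct and merely make explicit what the paper leaves implicit.
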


\section{Hyperbolic graphs}

Let us recall the concepts of geodesic and sequential boundary of a hyperbolic space and some basic properties. For further information and proofs we refer the reader to \cite{BH,BS,GH,G1}.

\smallskip

Let $X$ be a hyperbolic space and $o\in X$ a base point.

The \emph{relative geodesic boundary} of $X$ with respect to the base-point $o$ is
\[\partial_o^g X := \{ [\gamma]  \, | \, \gamma: [0,\infty) \to X \mbox{ is a geodesic ray with } \gamma(0) = o\},\]
where  $\gamma_1 \sim \gamma_2$ if there exists some $K>0$ such that
$d(\gamma_1(t),\gamma_2(t))<K$, for every $t\geq 0.$

In fact, the definition above is independent from the base point.
Therefore, the set of classes of geodesic rays is called \emph{geodesic boundary} of $X$, $\partial^gX$.  Herein, we do not distinguish between the geodesic ray and its image.

A sequence of points $\{x_i\}\subset X$ \emph{converges to infinity}
if \[\lim_{i,j\to \infty} (x_i|x_j)_o=\infty.\] This property is
independent of the choice of $o$ since
\[|(x|x')_o-(x|x')_{o'}|\leq d(o,o')\] for any $x,x',o,o' \in X$.

Two sequences $\{x_i\},\{x'_i\}$ that converge to infinity are
\emph{equivalent} if \[\lim_{i\to \infty} (x_i|x'_i)_o=\infty.\]
Using the $\delta$-inequality, we easily see that this defines an
equivalence relation for sequences in $X$ converging to infinity.
The \emph{sequential boundary at infinity} $\partial_\infty X$ of $X$ is
defined to be the set of equivalence classes of sequences
converging to infinity.

Note that given a geodesic ray $\gamma$, the sequence $\{\gamma(n)\}$ converges to infinity and two equivalent rays induce equivalent sequences.
Thus, in general, $\partial^g X\subseteq \partial_\infty X$.

We say that a metric space is \emph{proper} if every closed ball is compact.
Every uniform graph and every complete Riemannian manifold are proper geodesic spaces.

\begin{proposition}\cite[Chapter III.H, Proposition 3.1]{BH}\label{Prop: equiv_boundary} If $X$ is a proper hyperbolic geodesic space, then the natural map from $\partial^g X$ to $\partial_\infty X$ is a bijection.
\end{proposition}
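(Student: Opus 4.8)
The natural map $\iota\colon \partial^g X \to \partial_\infty X$ is the one sending the class of a geodesic ray $\gamma$ (based at a fixed $o\in X$) to the class of the sequence $\{\gamma(n)\}$; the excerpt already records that this is well defined, since equivalent rays induce equivalent sequences. The plan is to verify that $\iota$ is injective and surjective as two separate arguments. Throughout I write $\delta=\delta(X)$ and use that a geodesic ray $\gamma$ from $o$ satisfies $(\gamma(s)|\gamma(t))_o=\min\{s,t\}$ and $d(o,\gamma(t))=t$.

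For injectivity, let $\gamma_1,\gamma_2$ be geodesic rays from $o$ whose associated sequences are equivalent, that is $(\gamma_1(n)|\gamma_2(n))_o\to\infty$. The goal is a uniform bound on $d(\gamma_1(t),\gamma_2(t))$, obtained from two applications of the $\delta$-inequality. Using $(\gamma_i(t)|\gamma_i(n))_o=t$ for $n\ge t$, first write $(\gamma_1(n)|\gamma_2(t))_o\ge \min\{(\gamma_1(n)|\gamma_2(n))_o,\,(\gamma_2(n)|\gamma_2(t))_o\}-\delta\ge t-\delta$ once $n$ is large enough that $(\gamma_1(n)|\gamma_2(n))_o\ge t$; then $(\gamma_1(t)|\gamma_2(t))_o\ge \min\{(\gamma_1(t)|\gamma_1(n))_o,\,(\gamma_1(n)|\gamma_2(t))_o\}-\delta\ge t-2\delta$. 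Consequently $d(\gamma_1(t),\gamma_2(t))=2t-2(\gamma_1(t)|\gamma_2(t))_o\le 4\delta$ for every $t$, so $\gamma_1\sim\gamma_2$ and $\iota$ is injective.

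For surjectivity, which is where properness enters and which I expect to be the main obstacle, start with a sequence $\{x_i\}$ converging to infinity; since $(x_i|x_j)_o\le d(o,x_i)=(x_i|x_i)_o$, the hypothesis forces $d(o,x_i)\to\infty$. Choose geodesic segments $\sigma_i\colon[0,d(o,x_i)]\to X$ from $o$ to $x_i$ (available because $X$ is geodesic). These are $1$-Lipschitz with common initial point $o$, and since $X$ is proper the closed balls about $o$ are compact, so Arzel\`a--Ascoli yields a subsequence converging uniformly on compact sets to a geodesic ray $\gamma\colon[0,\infty)\to X$ with $\gamma(0)=o$ (a full ray because $d(o,x_i)\to\infty$). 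It then remains to check $\iota([\gamma])=[\{x_i\}]$: fixing $T$, the facts $d(\gamma(T),\sigma_{i_k}(T))\to 0$ and $(\sigma_{i_k}(T)|x_{i_k})_o=T$ give $(\gamma(T)|x_{i_k})_o\ge T-1$ for large $k$, and two further applications of the $\delta$-inequality (comparing $x_{i_k}$ with an arbitrary $x_j$, then passing from $\gamma(T)$ to $\gamma(n)$ along the ray) yield $(\gamma(n)|x_n)_o\ge T-1-2\delta$ for all large $n$. As $T$ is arbitrary, $\{\gamma(n)\}$ and $\{x_i\}$ are equivalent, so $\iota$ is surjective.

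The delicate step is the surjectivity argument: properness is precisely the hypothesis that supplies the Arzel\`a--Ascoli compactness needed to extract a limiting ray, and the crux is then chaining the $\delta$-inequality carefully to confirm that this ray is asymptotic to the original sequence. Injectivity and well-definedness, by contrast, are short consequences of the $\delta$-inequality and the exact computation of Gromov products along a geodesic issuing from $o$.
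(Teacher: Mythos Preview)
Your proof is essentially correct and follows the standard approach (the one given in \cite[Chapter III.H, Proposition 3.1]{BH}): injectivity via two applications of the $\delta$-inequality to bound $d(\gamma_1(t),\gamma_2(t))$, and surjectivity via Arzel\`a--Ascoli on the geodesic segments $[o,x_i]$, using properness to ensure compactness of closed balls. The final chaining step in surjectivity is a little terse but recoverable exactly as you indicate.

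Note, however, that the paper itself does \emph{not} supply a proof of this proposition: it is quoted directly from Bridson--Haefliger without argument. So there is no ``paper's own proof'' to compare against; your write-up simply reconstructs the cited result. If anything, one minor point to tighten: when you assert $(\gamma(n)|x_n)_o\ge T-1-2\delta$ for all large $n$, be explicit that you first pass from the subsequence $x_{i_k}$ to an arbitrary $x_j$ using $(x_{i_k}|x_j)_o\to\infty$ (one $\delta$-inequality), and then from $\gamma(T)$ to $\gamma(n)$ along the ray with $(\gamma(T)|\gamma(n))_o=T$ for $n\ge T$ (second $\delta$-inequality). With that spelled out, the argument is complete.
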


For every $\xi, \xi' \in \partial_\infty X$, its Gromov product
with respect to the base point
$o\in X$ is defined as
\[ (\xi|\xi')_o =  \inf \ \liminf_{i\to \infty} (x_i|x'_i)_o,\]
where the infimum is taken over all sequences $\{x_i\} \in \xi $, $\{x'_i\} \in \xi' $.

\begin{remark}\cite[Lemma 2.2.2]{BS}\label{Remark: geodesic_product}
If $X$ is a $\d$-hyperbolic geodesic space, then for every pair of geodesic rays
$\sigma, \sigma'$ with $\sigma(0)=x_0=\sigma'(0)$
and such that $ \{\sigma(n)\}\in \xi$ and $\{\sigma'(n)\} \in \xi'$,
\[(\xi|\xi')_{x_0}\leq \liminf_{t\to \infty} (\sigma(t)|\sigma'(t))_{x_0}\leq \limsup_{t\to \infty} (\sigma(t)|\sigma'(t))_{x_0}\leq (\xi|\xi')_{x_0}+2\delta.\]
\end{remark}

The Gromov product
\[(x|\xi)_o = \inf \ \liminf_{i\to \infty} (x|x_i)_o\]
is defined for any $x \in X$, $\xi \in \partial_\infty X$, where the infimum is taken over all
sequences $\{x_i\} \in \xi$.

A hyperbolic space $X$ is said to be \emph{visual}, if for some base point
$o\in X$ there is some constant $D>0$ such that for every $x\in X$ there
is $\xi \in \partial_\infty X$ with $d(o,x) \leq  (x|\xi )_o + D$. Moreover, this property
is independent of the choice of $o$.

\smallskip

Let us recall the following from \cite[p. 408]{BH}:

As usual, the \emph{degree} of a vertex $v\in V(\G)$ is defined as $\deg (v)=|N(v)|$.
Given three points $x,y,z$, in a metric space, there exist unique non-negative numbers $a,b,c$ such that $d(x,y)=a+b$, $d(x,z)=a+c$ and
$d(y,z)=b+c$. Let $T(a,b,c)$ be the metric tree that has three vertices of degree one, one vertex, $o_T$, of degree three and edges of length $a$, $b$ and $c$.

Given a geodesic triangle $\Delta=\{x,y,z\}$ in a geodesic space, there is a map $\Psi_\Delta: \Delta \to T(a,b,c)$ whose restriction to each side of $\Delta$ is an isometry. Let $o_{\Delta}=\Psi_\Delta^{-1}(o_T)$.

\begin{proposition}\cite[Chapter III.H, Proposition 1.17]{BH}\label{Prop: insize} Given a geodesic space $X$, the following conditions are equivalent:
\begin{itemize}
	\item[(1)] $X$ is $\delta$-hyperbolic,
	\item[(2)] there exists $\delta_1>0$ such that if $p,q\in \Psi_\Delta^{-1}(t)$, then $d(p,q)<\delta_1$ for every geodesic triangle $\Delta$ in $X$,
	\item[(3)] there exists $\delta_2>0$ such that $\diam(o_\Delta)<\delta_2$ for every geodesic triangle $\Delta$ in $X$.
\end{itemize}
Furthermore, if $(1)$ holds, then $\delta_1$ and $\delta_2$ are constants which just depend on $\d$.
\end{proposition}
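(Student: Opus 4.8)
The plan is to prove the three conditions equivalent cyclically, $(1)\Rightarrow(2)\Rightarrow(3)\Rightarrow(1)$, tracking the constants so that each new one depends only on the previous; this simultaneously yields the final assertion. Throughout I would fix a geodesic triangle $\Delta=\{x,y,z\}$ and work with its three \emph{internal points}: writing $a=(y|z)_x$, $b=(x|z)_y$, $c=(x|y)_z$, these are the points $i_z\in[x,y]$, $i_y\in[x,z]$, $i_x\in[y,z]$ that all map to $o_T$ under $\Psi_\Delta$, with $d(x,i_y)=d(x,i_z)=a$ and the analogous identities for $b,c$. The only routine step is $(2)\Rightarrow(3)$: since $i_x,i_y,i_z\in\Psi_\Delta^{-1}(o_T)=o_\Delta$, condition $(2)$ bounds their pairwise distances, so $\diam(o_\Delta)<\delta_1$ and $(3)$ holds with $\delta_2=\delta_1$.

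For $(1)\Rightarrow(2)$ I would argue directly with Gromov products. Let $p\in[x,y]$ and $p'\in[x,z]$ satisfy $\Psi_\Delta(p)=\Psi_\Delta(p')$; then both lie on the $x$-leg at a common distance $s=d(x,p)=d(x,p')\le a$. Since $p,p'$ lie on geodesics issuing from $x$, the identity $d(p,p')=2\big(s-(p|p')_x\big)$ reduces matters to bounding $(p|p')_x$ from below. Inserting the vertices $y$ and $z$ and applying the $\delta$-inequality twice gives $(p|p')_x\ge\min\{(p|y)_x,(y|z)_x,(z|p')_x\}-2\delta$; because $p,p'$ sit on $[x,y]$ and $[x,z]$ one computes $(p|y)_x=(z|p')_x=s$ and $(y|z)_x=a\ge s$, so the minimum equals $s$ and $d(p,p')\le 4\delta$. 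Hence $(2)$ holds with $\delta_1=4\delta$. (When the common tripod point is $o_T$ the three preimages are handled by the same estimate applied pairwise, each pair using the appropriate common vertex.)

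The substantive step is $(3)\Rightarrow(1)$, and the key idea I would use is to route a distance estimate through a \emph{shared} side rather than attempting to control matched points at an intermediate parameter. To establish the $\delta$-inequality with basepoint $x$ and intermediate point $z$, I fix one geodesic $[x,z]$ and build the two triangles $\{x,y,z\}$ and $\{x,z,w\}$ using this common side; after relabelling we may assume $\alpha:=(y|z)_x\le(z|w)_x=:\beta$. On $[x,z]$ the internal point $P$ of the first triangle sits at distance $\alpha$ from $x$ and the internal point $P'$ of the second at distance $\beta$, so $d(P,P')=\beta-\alpha$; let $Q\in[x,y]$ and $R\in[x,w]$ be the internal points at distance $\alpha$ and $\beta$ from $x$ on the remaining sides. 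Condition $(3)$ applied to each triangle gives $d(Q,P)<\delta_2$ and $d(P',R)<\delta_2$, whence, using $d(y,Q)=d(x,y)-\alpha$ and $d(R,w)=d(x,w)-\beta$,
\[ d(y,w)\le d(y,Q)+d(Q,P)+d(P,P')+d(P',R)+d(R,w) < d(x,y)+d(x,w)-2\alpha+2\delta_2. \]
Translating back, $(y|w)_x>\alpha-\delta_2=\min\{(y|z)_x,(z|w)_x\}-\delta_2$, so $(1)$ holds with $\delta=\delta_2$.

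I expect the last routing to be the main obstacle. The tempting approach of deriving thinness at an arbitrary parameter $s<a$ directly from the insize bound fails, because $(3)$ only controls the tip parameter and the natural recursion is circular; the resolution is to avoid intermediate matched points entirely and instead concatenate two \emph{tip} estimates along the shared geodesic $[x,z]$, letting the gap $\beta-\alpha$ be measured on that geodesic. Once this is in place, the three implications give $\delta_1=4\delta$, $\delta_2=\delta_1$ and a final hyperbolicity constant bounded by $\delta_2$, so both $\delta_1$ and $\delta_2$ may be taken equal to $4\delta$ and depend only on $\delta$, which is the stated \emph{furthermore} clause.
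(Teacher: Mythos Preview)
The paper does not give its own proof of this proposition; it is quoted from \cite[Chapter III.H, Proposition 1.17]{BH} and used as a black box. So there is nothing to compare against here, only the correctness of your argument to check.

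Your proof is correct. The step $(1)\Rightarrow(2)$ via two applications of the $\delta$-inequality to bound $(p|p')_x\ge s-2\delta$, and the trivial $(2)\Rightarrow(3)$, are standard. Your $(3)\Rightarrow(1)$ is a clean version of the usual argument: by building the two triangles $\{x,y,z\}$ and $\{x,z,w\}$ on a \emph{common} geodesic $[x,z]$, the two insize points $P,P'$ lie on that geodesic at distances $\alpha\le\beta$ from $x$, so $d(P,P')=\beta-\alpha$ is exact and the triangle inequality chain gives $(y|w)_x>\min\{(y|z)_x,(z|w)_x\}-\delta_2$ with no loss. This is essentially the argument in Bridson--Haefliger.

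One cosmetic point: in $(1)\Rightarrow(2)$ you obtain $d(p,p')\le 4\delta$, not a strict inequality, so to match the statement as written (with $d(p,q)<\delta_1$) you should take any $\delta_1>4\delta$ rather than $\delta_1=4\delta$. This does not affect the \emph{furthermore} clause, since $\delta_1$ can still be chosen depending only on $\delta$.
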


\begin{proposition} \label{Prop:polevisual}
A proper hyperbolic geodesic space has a pole if and only if it is visual.
\end{proposition}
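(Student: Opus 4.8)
The plan is to prove both implications from a single geometric fact: for a proper $\delta$-hyperbolic geodesic space, the point--boundary Gromov product $(x|\xi)_o$ is controlled, up to an additive constant depending only on $\delta$, by the products $(x|\gamma(t))_o$ along a geodesic ray $\gamma$ from $o$ with $[\gamma]=\xi$. Concretely, I would first establish the auxiliary estimate
$$
b-\delta \le (x|\xi)_o \le b, \qquad b:=\liminf_{t\to\infty}(x|\gamma(t))_o,
$$
valid for every $x\in X$, every $\xi\in\partial_\infty X$, and every geodesic ray $\gamma$ from $o$ to $\xi$, which exists by properness and Proposition \ref{Prop: equiv_boundary}. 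The upper bound is immediate, since $\{\gamma(n)\}\in\xi$ is one of the competitors in the infimum defining $(x|\xi)_o$. For the lower bound I would take an arbitrary sequence $\{x_i\}\in\xi$ and apply the $\delta$-inequality twice: first $(x|x_i)_o\ge\min\{(x|\gamma(N))_o,(\gamma(N)|x_i)_o\}-\delta$, and then, using $(\gamma(N)|\gamma(i))_o=N$ for $i>N$ together with $(\gamma(i)|x_i)_o\to\infty$, conclude $(\gamma(N)|x_i)_o\ge N-\delta$ for $i$ large. Letting $N$ grow (so that $(x|\gamma(N))_o$ eventually exceeds $b-\varepsilon$ and $N-\delta\ge b$) yields $\liminf_i(x|x_i)_o\ge b-\delta$ for every representative, hence the claimed lower bound for the infimum.

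With this in hand, the direction \emph{pole} $\Rightarrow$ \emph{visual} is short. I would take $o=v$ the pole, with constant $M$. Given $x$, choose a geodesic ray $\gamma$ from $v$ with $d(x,\gamma(t_0))\le M$ and set $\xi=[\gamma]$. A direct computation using $d(x,\gamma(t))\le M+(t-t_0)$ and $t_0\ge d(v,x)-M$ gives $(x|\gamma(t))_v\ge d(v,x)-M$ for all $t\ge t_0$, so $b\ge d(v,x)-M$ and therefore $(x|\xi)_v\ge d(v,x)-M-\delta$. Thus $X$ is visual with base point $v$ and $D=M+\delta$, and by the base-point independence of visuality noted above this suffices.

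For \emph{visual} $\Rightarrow$ \emph{pole} I would take the base point $o$ and constant $D$ from visuality and show that $v:=o$ is a pole. Given $x$, pick $\xi$ with $(x|\xi)_o\ge d(o,x)-D$ and a geodesic ray $\gamma$ from $o$ to $\xi$; then $b\ge(x|\xi)_o\ge d(o,x)-D$, so for $t$ large the number $m:=(x|\gamma(t))_o$ satisfies $m\ge d(o,x)-D-1$ and $m\le t$. Applying Proposition \ref{Prop: insize}(2) to the geodesic triangle $\{o,x,\gamma(t)\}$, whose tripod centre has preimage $\gamma(m)$ on $[o,\gamma(t)]$ and preimage $p$ at distance $m$ from $o$ on $[o,x]$, gives $d(\gamma(m),p)<\delta_1$, whence
$$
d(x,\gamma(m))\le d(x,p)+\delta_1=(d(o,x)-m)+\delta_1\le D+1+\delta_1 .
$$
Hence every point of $X$ lies in the $M$-neighbourhood of some geodesic ray from $o$ with $M=D+\delta_1+1$, i.e.\ $o$ is a pole.

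I expect the main obstacle to be the auxiliary estimate relating $(x|\xi)_o$ to the values along a geodesic ray. Because $(x|\xi)_o$ is defined as an infimum over \emph{all} sequences in $\xi$, the lower bound cannot be read off from a single favourable representative and requires the uniform $\delta$-inequality argument described above (the analogue of Remark \ref{Remark: geodesic_product} for point--boundary, rather than boundary--boundary, products). Once this is in place, the two implications are routine: a distance computation for the pole-to-visual direction, and a single application of the thin-triangle (insize) bound of Proposition \ref{Prop: insize} for the visual-to-pole direction.
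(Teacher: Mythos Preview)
Your proposal is correct and follows essentially the same route as the paper: the pole $\Rightarrow$ visual direction rests on a direct computation of $(x|\gamma(t))_v$ combined with the $\delta$-inequality, and the visual $\Rightarrow$ pole direction uses the thin-triangle (insize) bound of Proposition~\ref{Prop: insize} applied to the triangle $\{o,x,\gamma(t)\}$. Your presentation is tidier in that you isolate the point--boundary estimate $b-\delta\le (x|\xi)_o\le b$ as a separate lemma (the analogue of Remark~\ref{Remark: geodesic_product}) and then apply it twice, which also yields slightly sharper constants ($D=M+\delta$ rather than $2M+\delta$), but the underlying argument is the same.
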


\begin{proof} Suppose that there exist $v\in X$ and $M>0$ such that for any $x\in X$ there is a geodesic ray
$\sigma$ with $\sigma(0)=v$ and $d(x,\sigma)<M$.

Let $t_0\geq 0$ such that $d(x,\sigma(t_0))<M$. Hence, $t_0-M < d(v,x)<t_0+M$.

Also, $\{\sigma(i)\}\in \xi$ for some $\xi\in \partial_\infty X$. Then, for every  $\{x_i\}\in \xi$,
$\lim_{i\to \infty}(x_i|\sigma(i))_v=\infty$.
Since $X$ is $\delta$-hyperbolic for some constant $\d\ge 0$, then
\[(x|x_i)_v\geq \min\{(\sigma(i)|x_i)_v,(x|\sigma(i))_v\} -\delta=(x|\sigma(i))_v-\delta\]
for $i$ big enough.
Since $d(v,x) > t_0-M$, $d(v,\sigma(i))=i$ and $d(x,\sigma(i))\leq i-t_0+M$, we have
$$
\begin{aligned}
(x|\xi)_v
&\geq \liminf_{i\to \infty} (x|\sigma(i))_v-\delta
= \liminf_{i\to \infty} \frac{1}{2} \big(d(v,x)+d(v,\sigma(i))-d(x,\sigma(i))\big) -\delta
\\
& \geq \liminf_{i\to \infty} \frac{1}{2}(t_0-M+i-i+t_0-M) -\delta = t_0-M-\delta.
\end{aligned}
$$

Thus, $d(v,x)<t_0+M \leq (x|\xi)_v +2M +\delta$.

Now, suppose that there exist $v\in X$ and $D>0$ such that for every $x\in X$ there is some $\xi \in \partial_\infty X$ with $d(v,x)\leq (x|\xi)_v + D$.
By Proposition \ref{Prop: equiv_boundary}, there is some $\sigma\in \partial^g X$ such that $\sigma(0)=v$ and $\{\sigma(i)\}\in \xi$. Then,
\[
d(v,x)-D\leq (x|\xi)_v\leq \liminf_{i\to \infty} (x|\sigma(i))_v=\liminf_{i\to \infty} \frac{1}{2}\big(d(v,x)+d(v,\sigma(i))-d(x,\sigma(i))\big).
\]

Let us consider the geodesic triangle $\Delta=\{v,x,\sigma(i)\}$ and let $p=[vx]\cap o_\Delta$, $q=[x\sigma(i)]\cap o_\Delta$ and $\sigma(t_0)=[v\sigma(i)]\cap o_\Delta$.
By Proposition \ref{Prop: insize}, there exists $\delta_2=\d_2(\d)$ such that $\diam \{p,q,\sigma(t_0)\}\leq \delta_2$.

Hence, since $d(v,x)\leq d(v,\sigma(t_0))+d(\sigma(t_0),x)$ and
$$
d(x,\sigma(i))
=d(x,q)+d(q,\sigma(i))
=d(x,q)+d(\sigma(t_0),\sigma(i))
\geq d(x,\sigma(t_0))+d(\sigma(t_0),\sigma(i))-\delta_2,
$$
we have for any $\varepsilon >0$ and $i$ big enough
$$
\begin{aligned}
2d(v,x)-2D
& \leq d(v,\sigma(t_0))+d(\sigma(t_0),x) +i-d(x,\sigma(t_0))-d(\sigma(t_0),\sigma(i))+\delta_2 +2\varepsilon
\\
&=t_0+d(\sigma(t_0),x) +i-d(x,\sigma(t_0))-i+t_0+\delta_2 +2\varepsilon
\le 2t_0+2\delta_2+2\varepsilon.
\end{aligned}
$$

Also,
\[
d(v,x)
=d(v,p)+d(p,x)
=d(v,\sigma(t_0))+d(p,x)
\geq d(v,\sigma(t_0))+d(\sigma(t_0),x)-\delta_2
\]
and, therefore,
\[d(\sigma(t_0),x)\leq d(v,x)- d(v,\sigma(t_0))+\delta_2\leq t_0+\delta_2 +\varepsilon +D-t_0+\delta_2= D+2\delta_2+\varepsilon\]
and $X$ has a pole in $v$.
\end{proof}

A metric $d$ on the sequential boundary at infinity
$\partial_\infty X$ of $X$ is said to be \emph{visual}, if there are $o\in X$, $a > 1$ and positive
constants $c_1$, $c_2$, such that
\[c_1a^{-(\xi|\xi')_o} \leq  d(\xi, \xi') \leq c_2a^{-(\xi|\xi')_o}\]
for all $\xi, \xi' \in \partial_\infty X$. In this case, we say that $d$ is a visual metric
with respect to the base point $o$ and the parameter $a$.

\begin{theorem}\cite[Theorem 2.2.7]{BS} Let $X$ be a hyperbolic space. Then for any $o \in X$,
there is $a_0 > 1$ such that for every $a \in (1, a_0]$ there exists a metric $d$ on
$\partial_\infty X$, which is visual with respect to $o$ and $a$.
\end{theorem}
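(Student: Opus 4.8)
The plan is to obtain the visual metric by first turning the Gromov product into a ``quasi-metric'' on $\partial_\infty X$ and then metrizing it through a chain (Frink-type) construction, the second step being available only once $a$ is taken close enough to $1$. Fix $o\in X$ and, for $\xi,\xi'\in\partial_\infty X$, set
\[
\rho_a(\xi,\xi')=a^{-(\xi|\xi')_o}\quad(\xi\neq\xi'),\qquad \rho_a(\xi,\xi)=0,
\]
with the boundary Gromov product as defined above. Distinct classes $\xi\neq\xi'$ have $(\xi|\xi')_o<\infty$ (otherwise their representing sequences would be equivalent), so $\rho_a$ is symmetric, bounded by $1$, and vanishes exactly on the diagonal. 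The visual estimate $c_1a^{-(\xi|\xi')_o}\le d(\xi,\xi')\le c_2a^{-(\xi|\xi')_o}$ then amounts to comparing the metric $d$ we shall build with $\rho_a$.

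First I would establish a weak ultrametric inequality for $\rho_a$. Combining the definition of $(\cdot|\cdot)_o$ on $\partial_\infty X$ with Remark \ref{Remark: geodesic_product} (which bounds the defect between the boundary product and the $\liminf$ of interior products by $2\delta$), and applying the interior $\delta$-inequality to representing sequences before passing to the limit, one produces a constant $\delta'=\delta'(\delta)$, a fixed multiple of $\delta$, such that
\[
(\xi|\xi'')_o\ \ge\ \min\{(\xi|\xi')_o,\,(\xi'|\xi'')_o\}-\delta'
\]
for all $\xi,\xi',\xi''\in\partial_\infty X$. Exponentiating yields
\[
\rho_a(\xi,\xi'')\ \le\ a^{\delta'}\max\{\rho_a(\xi,\xi'),\,\rho_a(\xi',\xi'')\}.
\]
I would then set $a_0:=2^{1/(2\delta')}$, so that every $a\in(1,a_0]$ gives a multiplicative constant $a^{\delta'}\le\sqrt2$, i.e. $(a^{\delta'})^2\le 2$; this is precisely the threshold consumed by the metrization step (for a tree, $\delta'=0$ and every $a$ works, matching the ultrametric boundary).

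Next I would define the chain metric
\[
d(\xi,\xi')=\inf\Big\{\sum_{i=1}^{n}\rho_a(\xi_{i-1},\xi_i)\ :\ \xi=\xi_0,\xi_1,\dots,\xi_n=\xi'\Big\},
\]
the infimum over all finite chains. By construction $d$ is symmetric and satisfies the triangle inequality, and the trivial one-step chain gives $d\le\rho_a=a^{-(\xi|\xi')_o}$. The content is the matching lower bound $d\ge\tfrac12\rho_a$, which follows once one shows, for every chain,
\[
\rho_a(\xi_0,\xi_n)\ \le\ 2\sum_{i=1}^{n}\rho_a(\xi_{i-1},\xi_i).
\]
I would prove this by induction on $n$: writing $S$ for the total sum, split at the largest index $k$ with $\sum_{i\le k}\rho_a(\xi_{i-1},\xi_i)\le S/2$, so that both the head $\xi_0\cdots\xi_k$ and the tail $\xi_{k+1}\cdots\xi_n$ have partial sums $\le S/2$, while the bridging term $\rho_a(\xi_k,\xi_{k+1})\le S$. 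The inductive hypothesis bounds $\rho_a(\xi_0,\xi_k)$ and $\rho_a(\xi_{k+1},\xi_n)$ by $S$, and two applications of the weak ultrametric inequality give $\rho_a(\xi_0,\xi_n)\le(a^{\delta'})^2 S\le 2S$, using exactly $(a^{\delta'})^2\le 2$ and $a^{\delta'}\ge1$. Taking the infimum yields $\tfrac12\rho_a\le d$, so $d$ is visual with $c_1=\tfrac12$, $c_2=1$, and positive-definiteness follows from $d\ge\tfrac12 a^{-(\xi|\xi')_o}>0$ for $\xi\neq\xi'$.

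The derivation of the weak inequality in the first step and the verification that $d$ is a genuine metric are routine bookkeeping. The genuine obstacle, and the only place the quantitative hypothesis enters, is the inductive lower bound in the last step: the argument hinges on forcing $a^{\delta'}\le\sqrt2$, which is why $a_0$ cannot be taken arbitrarily large and why a visual metric is guaranteed only for parameters $a$ sufficiently close to $1$.
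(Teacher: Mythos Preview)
The paper does not supply its own proof of this statement: it is quoted verbatim as \cite[Theorem 2.2.7]{BS} and used as a black box, so there is no in-paper argument to compare against. Your proof is correct and is precisely the standard construction carried out in \cite{BS}: one first extends the $\delta$-inequality to the boundary Gromov product (with a defect $\delta'$ that is a fixed multiple of $\delta$), obtains the quasimetric $\rho_a=a^{-(\cdot|\cdot)_o}$ satisfying $\rho_a(\xi,\xi'')\le a^{\delta'}\max\{\rho_a(\xi,\xi'),\rho_a(\xi',\xi'')\}$, and then runs the Frink chain metrization, which goes through exactly when the multiplicative constant is at most $2$, i.e.\ when $a^{\delta'}\le\sqrt{2}$.

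Two minor remarks. First, your appeal to Remark~\ref{Remark: geodesic_product} is slightly off target: that remark compares the boundary product with $\liminf$ of products along geodesic rays, whereas what you need is the $\delta$-inequality for the boundary product itself, which follows directly from the definition of $(\xi|\xi')_o$ as an infimum of $\liminf$'s together with the interior $\delta$-inequality (this is the content of \cite[Lemma 2.2.2]{BS} and the discussion preceding it). Second, your formula $a_0=2^{1/(2\delta')}$ is undefined when $\delta'=0$; in that degenerate case $\rho_a$ is already an ultrametric for every $a>1$ and one may take $d=\rho_a$ directly, as you correctly observe parenthetically.
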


\begin{remark} Notice that for any visual metric, $\partial_\infty X$ is bounded and complete.
\end{remark}

Let us recall the following construction from \cite{BS}.

A subset $A$ in a metric space $(X,d)$ is called \emph{$r$-separated},
$r>0$, if $d(a,a')\geq r$ for any distinct $a,a'\in A$. Note that
if $A$ is maximal with this property, then the union $\cup_{a\in
A} B_r(a)$ covers $X$. A maximal $r$-separated set $A$ in a metric
space $X$ is called an $r$-\emph{approximation} of $X$.

A \emph{hyperbolic approximation} of a metric space $X$, $\mathcal{H}(X)$, is a graph defined as follows.
Fix a positive $r\leq \frac{1}{6}$ which is called the \emph{parameter} of $\mathcal{H}(X)$.
For every $k\in \mathbb{Z}$, let $A_k\in X$ be an
$r^k$-approximation of $X$. For every $a\in A_k$, consider the ball
$B(a,2r^k)\subset X$.
Let us define, for every $k$, the set $V^*_k:=\{B(a,2r^k)\,|\;a\in A_k\}$ and a set $V_k$ which has a
vertex corresponding to each ball in $V^*_k$. Then let $V=\cup_{k\in \mathbb{Z}}V_k$ be the set
of vertices of the graph $\mathcal{H}(X)$.
Thus, every vertex $v \in V$ corresponds
to some ball $B(a, 2r^k)$ with $a \in A_k$ for some $k$. Let us denote the corresponding ball to $v\in V$ simply by $B(v)$.

There is a natural level function $l:V \to \mathbb{Z}$ defined by $l(v)=k$
for $v\in V_k$.

Vertices $v,v'$ are connected by an edge if and only if they
either belong to the same level, $V_k$, and the closed balls
$\bar{B}(v),\bar{B}(v')$ intersect, $\bar{B}(v)\cap
\bar{B}(v')\neq \emptyset$, or they lie on neighboring levels
$V_k,V_{k+1}$ and the ball of the upper level, $V_{k+1}$, is
contained in the ball of the lower level, $V_k$.

Since $A_k$ is an $r^k$-approximation of $X$ for any $k\in\ZZ$ and $r\leq \frac{1}{6}$, every vertex in $V_k$ has a neighbor in $V_{k+1}$.

An edge $vv'\subset \mathcal{H}(X)$ is called \emph{horizontal} if its
vertices belong to the same level, $v,v'\in V_k$ for some $k\in
\mathbb{Z}$. Other edges are called \emph{radial}. Consider the
path metric on $X$ for which every edge has length 1.

Note that any (finite or infinite) sequence $\{v_k\}\in V$ such
that $v_kv_{k+1}$ is a radial edge for every $k$ and such that the level
function $l$ is monotone along $\{v_k\}$, is the vertex sequence
of a geodesic in $\mathcal{H}(X)$. Such a geodesic is called \emph{radial}.

\begin{proposition}\cite[Proposition 6.2.10]{BS}\label{Prop: hyp} For any metric space $(X,d)$, $\mathcal{H}(X)$ is a
geodesic $3$-hyperbolic space.
\end{proposition}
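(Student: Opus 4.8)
The plan is to verify the Gromov $\delta$-inequality directly, extracting the value $\delta=3$ from the interplay between the level function $l$ and the way the graph metric $d_{\mathcal{H}}$ of $\mathcal{H}(X)$ encodes the metric of $X$. Since the paper's definition of $\delta$-hyperbolicity quantifies the inequality $(x|y)_o\ge\min\{(x|z)_o,(z|y)_o\}-\delta$ over all base points, it is exactly the four-point condition on the four vertices $x,y,z,o$, and it is this that I would check. Being a connected graph equipped with its path metric, $\mathcal{H}(X)$ is automatically a geodesic space, so the whole content of the statement is the bound on the hyperbolicity constant. First I would record the elementary behaviour of $l$ along edges: it is unchanged along a horizontal edge and changes by exactly $\pm1$ along a radial edge, so that $|l(v)-l(w)|\le d_{\mathcal{H}}(v,w)$; moreover a radial edge always joins nested balls, while a horizontal edge joins balls whose closures meet. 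I would also fix for each vertex a distinguished ``parent'' at the next coarser level, which exists because each $A_k$ covers $X$, so that from any vertex one may descend monotonically through larger and larger balls.

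The central structural step is to show that any two vertices $v,w$ are joined by a geodesic whose level function is coarsely unimodal: it descends from $v$ to a bottom (coarsest) level $b(v,w)$, performs a bounded number of horizontal moves there, and ascends to $w$. This \emph{confluence level} $b(v,w)$ is comparable to $-\log_{1/r}d(a_v,a_w)$, where $a_v,a_w$ are the ball centers: if two balls are far apart relative to their radii one is forced down to the common coarse scale, and the hypothesis $r\le\frac16$ guarantees that once the right level is reached the enlarged balls already overlap, so that only finitely many (indeed $O(1)$) horizontal edges are ever needed. From this picture I would extract a distance formula
\[
d_{\mathcal{H}}(v,w)=l(v)+l(w)-2\,b(v,w)+O(1),
\]
with an additive error controlled explicitly by $r\le\frac16$. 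The heart of this step is proving that a descending-then-ascending path is within a fixed additive constant of being geodesic, that is, that one never profits from oscillating in level: each unit of descent must be repaid by a unit of ascent, and the nesting of balls forbids shortcuts.

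Substituting this formula into the four-point condition is the decisive simplification. For any four vertices the three matching sums $d_{\mathcal{H}}(x,y)+d_{\mathcal{H}}(z,o)$, $d_{\mathcal{H}}(x,z)+d_{\mathcal{H}}(y,o)$, $d_{\mathcal{H}}(x,o)+d_{\mathcal{H}}(y,z)$ each carry the same total $l(x)+l(y)+l(z)+l(o)$, which therefore cancels; the requirement that their two largest agree up to $2\delta$ thus reduces to the statement that the two smallest of $b(x,y)+b(z,o)$, $b(x,z)+b(y,o)$, $b(x,o)+b(y,z)$ agree up to a constant. This in turn follows from a single ultrametric-type inequality for the confluence level,
\[
b(v,w)\ge \min\{b(v,u),b(u,w)\}-c,
\]
which is purely metric: applying the triangle inequality in $X$ to the centers $a_v,a_u,a_w$, if $v$ meets $u$ and $u$ meets $w$ at fine scales then $d(a_v,a_w)\le d(a_v,a_u)+d(a_u,a_w)$ forces $v$ and $w$ to meet at a scale at most $\log_{1/r}2<1$ levels coarser, the strict inequality again coming from $r\le\frac16$.

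I expect the main obstacle to be the final bookkeeping. The qualitative hyperbolicity is straightforward once the unimodal-geodesic picture and the distance formula are in place, but pinning the sharp value down to $\delta=3$ requires careful, case-by-case control of the horizontal moves at the turning level and of the rounding introduced by the discreteness of the levels. This is precisely the point at which the choice $r\le\frac16$ is used decisively, both to keep the number of horizontal edges bounded and to make the ultrametric defect strictly less than a full level, so that the accumulated errors in the four-point estimate close up at the constant $3$ rather than something larger.
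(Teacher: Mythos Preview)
The paper does not supply its own proof of this proposition: it is quoted verbatim as \cite[Proposition 6.2.10]{BS} and used as a black box. There is therefore no in-paper argument to compare your proposal against.

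That said, your outline is essentially the argument given in Buyalo--Schroeder. The key ingredients there are exactly the ones you isolate: the level function, the existence of radial geodesics, the ``cone'' structure allowing one to descend from any two vertices until the associated balls overlap (using $r\le\frac16$), and the resulting Gromov-product formula $(v|w)_o\approx l(o)-b(v,w)$ from which the $\delta$-inequality reduces to an ultrametric-type estimate on the confluence level. Your identification of the bookkeeping for the sharp constant $3$ as the delicate point is also accurate; in \cite{BS} this is handled by tracking precisely how many horizontal edges are needed at the bottom level and how the rounding in the level indices interacts with the triangle inequality in $X$. So your plan is sound and matches the cited source, but be aware that the paper under review simply imports the result rather than reproving it.
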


Assume now that $X$ is bounded and non-trivial.
Then, since $r<1$, there is a maximal integer $k$ with $\diam X<r^k$ and it is
denoted by $k_0=k_0(\diam X,r)$. Then, for every $k\leq k_0$ the vertex
set $V_k$ consists of one point, and therefore contains no
essential information about $X$. Thus, the graph $\mathcal{H}(X)$ can be modified
making $V_k=\emptyset$ for every $k<k_0$.
This modified graph is called the \emph{truncated hyperbolic approximation} of $X$, $\mathcal{H}^t(X)$. The level function $l$ has a unique minimum, $v$, with $l(v)=k_0$. This point $v$ can be considered as the natural base point of the truncated hyperbolic approximation.

\begin{theorem}\cite[Proposition 6.4.1]{BS} \label{t:equalboundaries}
Let $\Gamma$ be a truncated hyperbolic approximation of a
complete bounded metric space $(X,d)$. Then there is a canonical identification
$\partial_\infty \Gamma = X$ under which the metric $d$ of $X$ is a visual metric on
$\partial_\infty \Gamma$ with respect to the natural base point $v$ of $\Gamma$ and the parameter $a = \frac{1}{r}$.
\end{theorem}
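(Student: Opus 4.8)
The plan is to construct the identification $\partial_\infty \Gamma = X$ through radial geodesic rays and then read off the visual estimate directly from the ball radii $2r^k$. First I would record the elementary geometric facts forced by the choice $r\le \tfrac16$: along any radial ray $v=v_{k_0},v_{k_0+1},\dots$ emanating from the base point $v$, the associated balls are nested, $B(v_{k+1})\subseteq B(v_k)$, with radii $2r^k\to 0$, so their centers form a Cauchy sequence whose limit $\xi\in X$ lies in $\bigcap_k \bar B(v_k)$ by completeness. This defines a map $\pi$ from radial rays to $X$. Conversely, given $x\in X$, maximality of the $r^k$-separated sets $A_k$ means the balls $B(a,r^k)$ ($a\in A_k$) cover $X$, so at each level one may pick $a_k\in A_k$ with $x\in B(a_k,r^k)$; the inequality $3r^{k+1}+r^k<2r^k$ (which holds since $r\le\tfrac16$) guarantees $B(a_{k+1},2r^{k+1})\subseteq B(a_k,2r^k)$, i.e. consecutive choices are joined by radial edges, so these balls form a radial ray with $\pi$-image $x$. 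This gives surjectivity onto $X$.

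Next I would promote $\pi$ to a bijection at the level of boundary classes. The key point is that horizontal edges exactly encode intersection of closed balls: if two radial rays to the same $\xi$ pass through $v_k,v_k'$ at level $k$, then $\xi\in \bar B(v_k)\cap \bar B(v_k')$, so $v_kv_k'$ is a horizontal edge and $d_{\,\Gamma}(v_k,v_k')\le1$; hence the two rays are equivalent. If instead $\pi(\gamma)=\xi\neq\xi'=\pi(\gamma')$, then for $4r^k<d(\xi,\xi')$ the closed balls $\bar B(v_k)\ni\xi$ and $\bar B(v_k')\ni\xi'$ are disjoint for every admissible choice, which forces the rays apart. Taking $X$ compact---the case of interest, and the one for which the uniform-perfectness hypotheses are applied---each $A_k$ is finite, so $\Gamma$ is locally finite and proper; Proposition \ref{Prop: equiv_boundary} then identifies $\partial^g\Gamma$ with $\partial_\infty\Gamma$, and a short cofinality argument (levels along any geodesic ray from $v$ tend to $+\infty$ since $v$ is the unique minimal-level vertex, after which hyperbolicity replaces the ray by a radial one at bounded distance) shows every boundary point is represented by a radial ray. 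Thus $\pi$ descends to a bijection $\partial_\infty\Gamma\to X$.

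The heart of the proof is the two-sided Gromov-product estimate. Given $\xi,\xi'\in X$ with $d(\xi,\xi')\le r^{k_0}$, let $m$ be the largest integer with $d(\xi,\xi')\le r^m$, so $r^{m+1}<d(\xi,\xi')\le r^m$. On the one hand, $d(\xi,\xi')\le r^m$ lets me place both points in a single ball $B(a,2r^m)$ (choose $a\in A_m$ with $\xi\in B(a,r^m)$; then $d(\xi',a)<2r^m$), so the rays to $\xi$ and $\xi'$ may be chosen through a common vertex at level $m$, forcing $(\xi|\xi')_v\ge (m-k_0)-C$. On the other hand, for $k\ge m+2$ one has $4r^k<d(\xi,\xi')$, so the balls carrying $\xi$ and $\xi'$ are disjoint and the rays have separated, forcing $(\xi|\xi')_v\le (m-k_0)+C'$; here I would convert ``common / separated last level'' into bounds on $(\xi|\xi')_v$ using Remark \ref{Remark: geodesic_product} together with the $3$-hyperbolicity of $\Gamma$ from Proposition \ref{Prop: hyp} (which absorbs the non-uniqueness of radial rays and the horizontal-edge slack into the additive constants $C,C'$). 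Combining, $(\xi|\xi')_v=(m-k_0)+O(1)$.

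Finally I would read off the visual inequality. With $a=1/r$ we have $a^{-(\xi|\xi')_v}=r^{(\xi|\xi')_v}\asymp r^{m-k_0}$, while the choice of $m$ gives $d(\xi,\xi')\asymp r^{m}$; since $k_0=k_0(\diam X,r)$ is fixed, the factor $r^{-k_0}$ and the $O(1)$ from hyperbolicity are constants, yielding $c_1 a^{-(\xi|\xi')_v}\le d(\xi,\xi')\le c_2 a^{-(\xi|\xi')_v}$ for all $\xi,\xi'$ (the pairs with $d(\xi,\xi')>r^{k_0}$ being handled by enlarging the constants). This is exactly the assertion that $d$ is a visual metric with base point $v$ and parameter $1/r$. \textbf{The main obstacle} I anticipate is precisely this Gromov-product estimate: radial rays are far from unique and ``sharing a vertex'' is really ``being within one horizontal edge,'' so turning the clean ball-geometry dichotomy (common ball at level $m$ versus disjoint balls at level $m+2$) into genuine upper and lower bounds on the boundary Gromov product requires careful bookkeeping of the hyperbolicity constant via Remark \ref{Remark: geodesic_product}.
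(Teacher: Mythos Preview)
The paper does not prove this statement: Theorem~\ref{t:equalboundaries} is simply quoted from \cite[Proposition~6.4.1]{BS} and used as a black box, with no argument given in the paper itself. There is therefore nothing to compare your proposal against here.

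That said, your outline is essentially the standard argument from \cite{BS}: nested radial balls give the map $\pi$ to $X$, maximality of the $A_k$ gives surjectivity, the horizontal-edge criterion controls equivalence of rays, and the dichotomy ``common ball at level $m$ / disjoint balls from level $m+2$'' yields $(\xi|\xi')_v=m-k_0+O(1)$ and hence the visual estimate with parameter $1/r$. One caution: you invoke compactness of $X$ to make $\Gamma$ proper and then apply Proposition~\ref{Prop: equiv_boundary}, but the theorem as stated only assumes $X$ complete and bounded, so $A_k$ need not be finite and $\Gamma$ need not be proper. In \cite{BS} this is handled without properness, by showing directly that every sequence converging to infinity in $\Gamma$ is equivalent to a radial geodesic ray (using that the level function is a $1$-Lipschitz ``height'' and that any two vertices at the same level with intersecting balls are at $\Gamma$-distance $1$). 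If you want your argument to cover the full statement rather than just the compact case used later in the paper, you should replace the appeal to Proposition~\ref{Prop: equiv_boundary} by that direct argument.
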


The following is inspired by the definition of a metric space having bounded geometry in \cite{BDHM}.

\begin{definition} Given a metric space $(X,d)$, we say that $X$ has \emph{strongly bounded geometry} if for every $K>0$ there exists $M>0$ such that the following condition is satisfied:
for any $\varepsilon >0$, any $\varepsilon$-approximation of $X$, $A_\varepsilon$, and any $x\in X$, $|A_\varepsilon \cap B(x,K\varepsilon)|<M$.
\end{definition}

\begin{remark} \label{rem:diam}
Note that if $\varepsilon> \diam X$, then $|A_\varepsilon|=1$ and $|A_\varepsilon \cap B(x,K\varepsilon)| \le |A_\varepsilon|=1$.
Hence, in order to check whether $X$ has strongly bounded geometry or not, it suffices to consider $0 < \varepsilon \le \diam X$.
\end{remark}

\begin{proposition}\label{Prop:uniform} If the metric space $(X,d)$ has strongly bounded geometry, then $\mathcal{H}(X)$ is uniform.
\end{proposition}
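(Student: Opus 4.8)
The plan is to show that every vertex of $\mathcal{H}(X)$ has degree bounded by a constant depending only on $r$ and on the function $K \mapsto M$ furnished by strongly bounded geometry. Fix an arbitrary vertex $v \in V_k$, corresponding to the ball $B(v) = B(a, 2r^k)$ with $a \in A_k$. By the definition of the edges of $\mathcal{H}(X)$, the neighbors of $v$ split into three families: the horizontal neighbors lying in $V_k$, the radial neighbors in the upper level $V_{k+1}$, and the radial neighbors in the lower level $V_{k-1}$. I would bound the cardinality of each family separately and then add the three bounds to obtain a uniform bound $\mu$ on the degree.

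The key observation in each case is that an edge forces the center of the neighboring ball to lie in a ball around $a$ whose radius is a \emph{fixed} multiple of the approximation scale of the corresponding level, so that strongly bounded geometry caps the number of admissible centers. Concretely, a horizontal edge to $v' = B(a', 2r^k) \in V_k$ requires $\bar B(a, 2r^k) \cap \bar B(a', 2r^k) \neq \emptyset$, whence $d(a,a') \le 4r^k$ and $a' \in A_k \cap B(a, 5r^k)$; here the scale is $\varepsilon = r^k$ and the radius is $5\varepsilon$. A radial edge to $w = B(c, 2r^{k+1}) \in V_{k+1}$ requires $B(c, 2r^{k+1}) \subseteq B(a, 2r^k)$, so $c \in B(a, 2r^k)$, i.e. $c \in A_{k+1} \cap B(a, (2/r)\varepsilon')$ with scale $\varepsilon' = r^{k+1}$. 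A radial edge to $u = B(b, 2r^{k-1}) \in V_{k-1}$ requires $B(a, 2r^k) \subseteq B(b, 2r^{k-1})$, so $b \in B(a, 2r^{k-1})$, i.e. $b \in A_{k-1} \cap B(a, 2\varepsilon'')$ with scale $\varepsilon'' = r^{k-1}$.

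Applying the definition of strongly bounded geometry with $K = 5$, $K = 2/r$, and $K = 2$ respectively---each a constant independent of $k$ once $r$ is fixed, and each finite since $r \le 1/6$---I obtain constants $M_5$, $M_{2/r}$, $M_2$ bounding the three families, so $\deg(v) \le M_5 + M_{2/r} + M_2 =: \mu$ for every $v$, and $\mathcal{H}(X)$ is $\mu$-uniform. The only points that require care are the bookkeeping of which of the two neighboring levels plays the role of ``upper'' and ``lower'' in the radial condition---so that the ball radius $2r^k$ is matched against the correct power of $r$, producing the factor $2/r$ for the upward edges---and the passage from the closed-ball intersection condition to an open ball in order to invoke the definition of strongly bounded geometry. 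Neither of these is a genuine obstacle, and beyond this matching of scales the argument is a routine counting estimate.
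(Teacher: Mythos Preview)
Your proposal is correct and follows essentially the same approach as the paper: both split the neighbors of $v$ into horizontal, upper-radial, and lower-radial families and bound each using strongly bounded geometry with the constants $K=5$, $K=2/r$, and $K=2$ respectively, summing the three bounds to obtain a uniform degree bound.
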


\begin{proof} Let $v$ be any vertex in $\mathcal{H}(X)$.
Let $r$ be the parameter of $\mathcal{H}(X)$ and suppose that $B(v)=B(x,2r^n)$ for some $x\in X$ and $n\in \mathbb{N}$.

For each $k\in \mathbb{N}$, let $A_k$ be the $r^k$-approximation from the construction of $\mathcal{H}(X)$. Since $X$ has strongly bounded geometry, there is some constant $M_1$ such that $|A_n \cap B(x,5r^n)|<M_1$.
Thus, there are less than $M_1$ vertices adjacent to $v$ at level $n$.

Also, there is a constant $M_2$ such that $|A_{n+1} \cap B(x,\frac{2}{r}\cdot r^{n+1})|<M_2$. Therefore, there are less than $M_2$ vertices adjacent to $v$ at level $n+1$.

Finally, there is a constant $M_3$ such that $|A_{n-1} \cap B(x,2\cdot r^{n-1})|<M_3$. Thus, there are less than $M_3$ vertices adjacent to $v$ at level $n-1$.

Hence, $|N(v)|<M_1+M_2+M_3$. Since $M_1$, $M_2$ and $M_3$ only depend on a fixed parameter $r$, then $|N(v)|<M_1+M_2+M_3$ for every vertex $v$ in $\mathcal{H}(X)$. Therefore, $\mathcal{H}(X)$ is uniform.
\end{proof}

A map $f: X \to X'$ between metric spaces is said to be a \emph{rough similarity}
if there exist constants $a>0$ and $b,\varepsilon \ge0$ such that
$a\, d(x,x')-b \leq d'(f(x),f(x'))\leq a\, d(x,x')+b$ for all $x, x' \in X$ and $f$ is $\varepsilon$-\emph{full}.
If there is a rough similarity between $X$ and $X'$, then the spaces $X$ and $X'$ are called \emph{roughly similar} to each other.

\begin{theorem}\cite[Corollary 7.1.6]{BS} \label{th:quasi-isometric} Visual hyperbolic geodesic spaces $X,X'$ with
bilipschitz equivalent boundaries at infinity are roughly similar
to each other. In particular, every visual hyperbolic space is
roughly similar (and therefore, quasi-isometric) to any (truncated) hyperbolic approximation of its boundary at
infinity; and any two hyperbolic approximations of a complete
bounded metric space $Z$ are roughly similar (and therefore, quasi-isometric) to each other.
\end{theorem}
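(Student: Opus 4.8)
The plan is to reconstruct each visual space from its boundary up to bounded error and then to transport one reconstruction onto the other through the given bilipschitz boundary map; the resulting map will be the desired rough similarity, with multiplicative constant governed by the ratio of the two visual parameters.

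First I would fix base points $o\in X$, $o'\in X'$ and encode points of $X$ by boundary data. Since $X$ is visual, for each $x\in X$ there is $\xi=\xi(x)\in\partial_\infty X$ with $d(o,x)\le (x|\xi)_o+D$; as $(x|x_i)_o\le d(o,x)$ always holds we also have $(x|\xi)_o\le d(o,x)$, so that $(x|\xi)_o=d(o,x)+O(1)$ and $x$ lies within bounded distance of a geodesic ray $\g_\xi$ from $o$ to $\xi$, at parameter $t:=d(o,x)$. The key geometric input, which I would establish from $\d$-hyperbolicity together with the boundary Gromov-product comparison in Remark \ref{Remark: geodesic_product}, is the distance formula: if $x,x'$ carry data $(\xi,t),(\xi',t')$ then
\[ (x|x')_o=\min\{t,t',(\xi|\xi')_o\}+O(\d), \qquad d(x,x')=t+t'-2\min\{t,t',(\xi|\xi')_o\}+O(\d). \]
In particular a point is determined up to bounded distance by the pair $(\xi,t)$, so this encoding is a coarse bijection between $X$ and $\partial_\infty X\times[0,\infty)$.

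Next I would transport the boundary datum. Let $\rho,\rho'$ be visual metrics on $\partial_\infty X,\partial_\infty X'$ with parameters $a,a'$ (existing by \cite[Theorem 2.2.7]{BS}), and let $\phi$ be the bilipschitz homeomorphism between them. Combining the two-sided visual estimates $\rho(\xi,\xi')\asymp a^{-(\xi|\xi')_o}$ and $\rho'(\eta,\eta')\asymp (a')^{-(\eta|\eta')_{o'}}$ with the bilipschitz bound on $\phi$ gives, after taking logarithms,
\[ (\phi\xi|\phi\xi')_{o'}=\l\,(\xi|\xi')_o+O(1), \qquad \l:=\frac{\log a}{\log a'}. \]
I then define $f:X\to X'$ by sending a point $x$ with data $(\xi,t)$ to any point of a geodesic ray from $o'$ to $\phi(\xi)$ at parameter $\l t$. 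Applying the distance formula in $X'$ and the displayed relation yields $\min\{\l t,\l t',(\phi\xi|\phi\xi')_{o'}\}=\l\min\{t,t',(\xi|\xi')_o\}+O(1)$, whence $d'(f(x),f(x'))=\l\,d(x,x')+O(1)$, which is exactly the rough-similarity estimate with multiplicative constant $\l$. Fullness of $f$ follows from visuality of $X'$ and surjectivity of $\phi$: every $y\in X'$ is boundedly close to a ray toward some $\eta=\phi(\xi)$ at parameter $d(o',y)$, and that point coincides, up to bounded error, with $f\big(\g_\xi(d(o',y)/\l)\big)$.

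The two ``in particular'' assertions then drop out. A (truncated) hyperbolic approximation $\G$ of $\partial_\infty X$ is hyperbolic geodesic by Proposition \ref{Prop: hyp} and is visual because its radial geodesics realize the visuality condition at the base point; by Theorem \ref{t:equalboundaries}, $\partial_\infty\G$ is canonically identified with $\partial_\infty X$ carrying the \emph{same} visual metric, so the boundaries are isometric, and the main statement gives rough similarity between $X$ and $\G$. For two hyperbolic approximations of a complete bounded space $Z$, Theorem \ref{t:equalboundaries} identifies both boundaries with $Z$ equipped with the original metric $d$, so the boundaries are again isometric and the main statement applies (here the scaling is $\l=\log r_1/\log r_2$). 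The main obstacle is the distance formula itself, and in particular the bookkeeping of additive constants when the minimum is attained at $t$ or $t'$—a point sitting ``below'' the branch point of the two rays—rather than at $(\xi|\xi')_o$; this is where $\d$-hyperbolicity and Remark \ref{Remark: geodesic_product} must be used carefully, and it is precisely what forces the conclusion to be a rough similarity rather than an isometry.
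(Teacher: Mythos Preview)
The paper does not prove this theorem at all: it is quoted verbatim as \cite[Corollary 7.1.6]{BS} and used as a black box. There is therefore no ``paper's own proof'' to compare against.

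That said, your outline is essentially the standard argument from Buyalo--Schroeder. The distance formula
\[
d(x,x')=t+t'-2\min\{t,t',(\xi|\xi')_o\}+O(\d)
\]
is precisely the statement that a visual hyperbolic geodesic space is roughly isometric to the hyperbolic cone $\mathrm{Con}(\partial_\infty X)$ over its boundary (this is \cite[Theorem 7.1.2, Proposition 8.2.4]{BS}); once this is established, transporting through the bilipschitz map and rescaling by $\l=\log a/\log a'$ is exactly how Buyalo--Schroeder obtain the rough similarity. One small correction: in the second ``in particular'' you say both boundaries are identified with $(Z,d)$, hence isometric; but Theorem \ref{t:equalboundaries} makes $d$ a visual metric with parameter $a=1/r$, which differs between the two approximations, so the two copies of $Z$ carry the \emph{same} metric and the identity is bilipschitz (indeed isometric), which is all you need. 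Your identification of the main technical obstacle---controlling the additive error in the distance formula when the minimum is attained at $t$ or $t'$---is accurate; this is where the tripod/thin-triangle comparison (Proposition \ref{Prop: insize}) does the work.
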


Theorem \ref{th:quasi-isometric}, Propositions \ref{Prop:polevisual} and \ref{Prop:uniform} and Lemma \ref{l:quasi-isometric} immediately yield the following.

\begin{proposition}\label{Prop:hyperbolic-equiv}
Let $\Gamma$ be a hyperbolic uniform graph with a pole, and such that $\partial_\infty \Gamma$ has strongly bounded geometry for some visual metric.
Then $h(\Gamma)>0$ if and only if $h(\mathcal{H}^t(\partial_\infty \Gamma))>0$.
\end{proposition}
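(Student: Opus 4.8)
The plan is to exhibit $\Gamma$ and $\mathcal{H}^t(\partial_\infty \Gamma)$ as two \emph{quasi-isometric uniform graphs} and then invoke Lemma \ref{l:quasi-isometric}, which reduces the statement to an equivalence of positivity of the Cheeger constant under quasi-isometry. All the ingredients are already in place, so the proof is essentially a chaining of the preceding results.

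First I would establish that $\Gamma$ is visual. Since $\Gamma$ is a uniform graph, it is a proper geodesic space; being in addition hyperbolic and possessing a pole, Proposition \ref{Prop:polevisual} applies and gives that $\Gamma$ is visual. Fixing the visual metric on $\partial_\infty \Gamma$ for which strongly bounded geometry is assumed, the boundary $\partial_\infty \Gamma$ is then a complete and bounded metric space (any visual metric renders the boundary bounded and complete), so its truncated hyperbolic approximation $\mathcal{H}^t(\partial_\infty \Gamma)$ is well defined and carries a natural base point.

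Next I would produce the quasi-isometry and verify uniformity of both graphs. By Theorem \ref{th:quasi-isometric}, every visual hyperbolic geodesic space is roughly similar, and hence quasi-isometric, to any truncated hyperbolic approximation of its boundary at infinity; applied to $\Gamma$ this yields a quasi-isometry $\Gamma \to \mathcal{H}^t(\partial_\infty \Gamma)$. The graph $\Gamma$ is uniform by hypothesis, and Proposition \ref{Prop:uniform} gives that $\mathcal{H}(\partial_\infty \Gamma)$ is uniform because $\partial_\infty \Gamma$ has strongly bounded geometry for the chosen metric. The truncation producing $\mathcal{H}^t$ only deletes vertices from the trivial lower levels $V_k$ (each a single point for $k<k_0$), so no vertex degree increases and $\mathcal{H}^t(\partial_\infty \Gamma)$ is uniform as well. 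With two quasi-isometric uniform graphs in hand, Lemma \ref{l:quasi-isometric} gives at once that $h(\Gamma)>0$ if and only if $h(\mathcal{H}^t(\partial_\infty \Gamma))>0$.

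The only genuinely delicate point is the passage from $\mathcal{H}$ to $\mathcal{H}^t$: Proposition \ref{Prop:uniform} is phrased for the full hyperbolic approximation, whereas Theorem \ref{th:quasi-isometric} supplies the quasi-isometry to the \emph{truncated} one. Reconciling these — namely checking that the chosen visual metric makes $\partial_\infty \Gamma$ complete and bounded so that $\mathcal{H}^t$ is defined, and that truncation preserves uniformity — is the main (and only minor) obstacle; everything else is a direct citation.
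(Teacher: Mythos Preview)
Your proposal is correct and follows exactly the route the paper takes: the paper derives the proposition in one line from Theorem~\ref{th:quasi-isometric}, Propositions~\ref{Prop:polevisual} and~\ref{Prop:uniform}, and Lemma~\ref{l:quasi-isometric}, which is precisely the chain you spell out. Your added remarks on completeness/boundedness of the visual boundary and on why truncation preserves uniformity are valid and simply make explicit details the paper leaves implicit.
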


\begin{proposition}\label{Prop: bounded} If $\Gamma$ is a hyperbolic uniform graph with a pole, then $\partial_\infty \Gamma$ with any visual metric has strongly bounded geometry.
\end{proposition}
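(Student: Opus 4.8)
The plan is to transfer the separation condition on $\partial_\infty\Gamma$ into a statement about geodesic rays in $\Gamma$ issuing from the base point of the visual metric, and then to exploit the bounded degree of $\Gamma$. Fix a base point $o\in\Gamma$ for which the given metric $d$ on $\partial_\infty\Gamma$ is visual, say $c_1a^{-(\xi|\xi')_o}\le d(\xi,\xi')\le c_2a^{-(\xi|\xi')_o}$, let $\delta$ be the hyperbolicity constant and $\mu$ the degree bound. Since a uniform graph is a proper geodesic space, Proposition \ref{Prop: equiv_boundary} and the base-point independence of $\partial^g\Gamma$ let me choose, for each $\xi\in\partial_\infty\Gamma$, a geodesic ray $\gamma_\xi$ with $\gamma_\xi(0)=o$ representing $\xi$; by Remark \ref{rem:diam} it suffices to treat $0<\varepsilon\le\diam(\partial_\infty\Gamma)$.

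First I would rewrite the two hypotheses on a point of $A_\varepsilon\cap B(x,K\varepsilon)$ as bounds on Gromov products at infinity. If $\xi,\xi'\in A_\varepsilon\cap B(x,K\varepsilon)$ are distinct, then $d(\xi,\xi')\ge\varepsilon$ forces $(\xi|\xi')_o\le L_1:=\log_a(c_2/\varepsilon)$, while $d(\xi,x)<K\varepsilon$ forces $(\xi|x)_o\ge L_0:=\log_a\big(c_1/(K\varepsilon)\big)$. The crucial point is that $L_1-L_0=\log_a(c_2K/c_1)$ is a constant depending only on $K$ and the visual constants, \emph{independent of $\varepsilon$ and $x$}.

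Next I would pass from boundary products to the rays. Along two rays from $o$ the function $t\mapsto(\gamma_\xi(t)|\gamma_{\xi'}(t))_o=t-\tfrac12 d(\gamma_\xi(t),\gamma_{\xi'}(t))$ is non-decreasing, because $d(\gamma_\xi(t),\gamma_{\xi'}(t))$ grows at unit speed in each coordinate; hence by Remark \ref{Remark: geodesic_product} it stays $\le(\xi|\xi')_o+2\delta$ for all $t$. Together with the standard fellow-traveling estimate for rays from a common point, the bound $(\xi|x)_o\ge L_0$ yields a constant $c_\delta$ with $d(\gamma_\xi(m),\gamma_x(m))\le c_\delta$ at level $m:=\max\{\lfloor L_0\rfloor,0\}$ (absorbing the usual bounded additive loss into $c_\delta$). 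Setting $N:=\lceil L_1\rceil+\lceil 2\delta\rceil+1$, for distinct $\xi,\xi'$ monotonicity gives $d(\gamma_\xi(N),\gamma_{\xi'}(N))=2\big(N-(\gamma_\xi(N)|\gamma_{\xi'}(N))_o\big)\ge 2\big(N-L_1-2\delta\big)>0$, so $\gamma_\xi(N)\neq\gamma_{\xi'}(N)$ and the map $\xi\mapsto\gamma_\xi(N)$ is injective on $A_\varepsilon\cap B(x,K\varepsilon)$.

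Finally I would count. Each $\gamma_\xi$ agrees with $\gamma_x$ up to distance $c_\delta$ at level $m$ and then travels $N-m$ further steps, so $\gamma_\xi(N)$ lies in the ball of $\Gamma$ of radius $\rho:=c_\delta+(N-m)$ about $\gamma_x(m)$; since $N-m\le(L_1-L_0)+\lceil2\delta\rceil+2$ (and in the degenerate case $L_0<0$ one has $L_1\le L_1-L_0$, so $N$ itself is bounded), $\rho$ depends only on $K,\delta,a,c_1,c_2$. Because $\Gamma$ is $\mu$-uniform, this ball contains at most $M:=1+\mu+\cdots+\mu^{\lceil\rho\rceil}$ vertices, whence $|A_\varepsilon\cap B(x,K\varepsilon)|\le M$ for all admissible $\varepsilon$ and all $x$, which is exactly strongly bounded geometry. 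The main obstacle is the third step: making the fellow-traveling and injectivity estimates uniform, that is, controlled purely by $\delta$ and the visual constants rather than by $\varepsilon$; the monotonicity of the Gromov product along rays from $o$ and Remark \ref{Remark: geodesic_product} are precisely what force the window between levels $L_0$ and $L_1$ to have $\varepsilon$-independent width.
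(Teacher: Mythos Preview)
Your proof is correct and follows essentially the same approach as the paper's: translate the visual-metric bounds $d\ge\varepsilon$ and $d<K\varepsilon$ into an $\varepsilon$-independent window $L_1-L_0=\log_a(c_2K/c_1)$ on Gromov products, use thin triangles/fellow-traveling to pin all the relevant rays near $\gamma_x$ at level $\approx L_0$, show the rays separate (so $\xi\mapsto\gamma_\xi(N)$ is injective) at level $\approx L_1+2\delta$, and then count vertices using the $\mu$-uniform bound. The only cosmetic differences are that the paper works with liminf/limsup of finite Gromov products and invokes Proposition~\ref{Prop: insize} explicitly (introducing levels $t_1,t_2$ and a two-stage count $\mu^{\delta_1}\cdot\mu^{t_2-t_1}$), whereas you streamline by noting the monotonicity of $t\mapsto(\gamma_\xi(t)|\gamma_{\xi'}(t))_o$ and counting inside a single ball of radius $\rho\approx c_\delta+(N-m)$; neither proof actually uses the pole hypothesis.
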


\begin{proof} Let $x_0$ be a pole of $\G$ and $d$ any metric in $\partial_\infty \Gamma$ with
\[c_1a^{-(\xi|\xi')_{x_0}}\leq d(\xi,\xi') \leq c_2a^{-(\xi|\xi')_{x_0}}\]
for all $\xi,\xi'\in \partial_\infty \Gamma$.

Since $\G$ is $\d$-hyperbolic for some constant $\d\ge 0$,
Remark \ref{Remark: geodesic_product} gives that
for every pair of geodesic rays $\sigma, \sigma'$ with $\sigma(0)=x_0=\sigma'(0)$ such that $\{\sigma(n)\}\in \xi$ and $\{\sigma'(n)\}\in \xi'$,
\[\liminf_{t\to \infty} c_1a^{-(\sigma(t)|\sigma'(t))_{x_0}}\leq d(\xi,\xi') \leq
\liminf_{t\to \infty} c_2a^{2\delta} a^{-(\sigma(t)|\sigma'(t))_{x_0}}.\]

Consider any $K>0$ and any $\varepsilon >0$.
By Remark \ref{rem:diam}, we can assume that $0<\varepsilon \le \diam \partial_\infty \Gamma$.
Let $\sigma_0,\sigma$ be any two geodesic rays with starting in $x_0$.
If $[\sigma]\in B([\sigma_0],K\varepsilon)$, then
$$
\begin{aligned}
\liminf_{t\to \infty} c_1a^{-(\sigma(t)|\sigma_0(t))_{x_0}}
& < K\varepsilon,
\\
\limsup_{t\to \infty} (\sigma(t)|\sigma_0(t))_{x_0}
& >-\log_a{\frac{K\varepsilon}{c_1}}.
\end{aligned}
$$
By Remark \ref{Remark: geodesic_product},
$$
\liminf_{t\to \infty} (\sigma(t)|\sigma_0(t))_{x_0}
 >-\log_a{\frac{K\varepsilon}{c_1}}-2\d.
$$

Assume first that $t_1:=\big\lfloor-\log_a{\frac{K\varepsilon}{c_1}}-2\d\big\rfloor \geq 0$, where $\lfloor s \rfloor$ denotes the lower integer part of $s$, i.e., the greatest integer less or equal than $s$.
Thus, there exists $N$ big enough so that
$$(\sigma(N)|\sigma_0(N))_{x_0}=N-\frac{1}{2}d(\sigma(N),\sigma_0(N))>t_1\geq 0.$$
Let $c=\frac{1}{2}d(\sigma(N),\sigma_0(N))>0$.
Then, $d(x_0,\sigma(N))=d(x_0,\sigma_0(N))=(N-c)+c$ and $d(\sigma(N),\sigma_0(N))=2c$.
Therefore, considering the geodesic triangle $\D=\{x_0,\sigma(N),\sigma_0(N)\}$, by Proposition \ref{Prop: insize}, there exists $\delta_1=\d_1(\d)>0$ such that $d(\sigma(t),\sigma_0(t))<\delta_1$ for every $t\leq N-c$.
In particular, $d(\sigma_0(t_1),\sigma(t_1))<\delta_1$.

Let $A_\varepsilon$ be any $\varepsilon$-approximation of $\partial_\infty\Gamma$. Let $\{\gamma_i\}_{i\in I}$ be a set of geodesic rays with $\gamma_i\not \sim \gamma_j$ for every $i\neq j$ and such that $\{[\gamma_i]\}_{i\in I}=A_{\varepsilon}\cap B([\sigma_0],K\varepsilon)$.

Since $d([\gamma_i],[\gamma_j]) \ge \varepsilon$, then
$$\liminf_{t\to \infty} c_3 a^{- (\gamma_i(t)|\gamma_j(t))_{x_0}}>\varepsilon, \quad \mbox{ where } c_3=2c_2a^{2\delta}.$$
Hence,
$$
\limsup_{t\to \infty} (\gamma_i(t)|\gamma_j(t))_{x_0} < -\log_a{\frac{\varepsilon}{c_3}}\,.
$$
Since the Gromov product is non-negative, $0 < -\log_a{\frac{\varepsilon}{c_3}}$
and $t_2:= \big\lceil-\log_a{\frac{\varepsilon}{c_3}}\big\rceil\geq 1$, where $\lceil s \rceil$ denotes the upper integer part of $s$
(thus, we can consider $\gamma_i(t_2)$).
Therefore, for $N$ big enough,
$$(\gamma_i(N)|\gamma_j(N))_{x_0}=\frac{1}{2}\big(2N-d(\gamma_i(N),\gamma_j(N))\big) <t_2.$$
Hence, $2(N-t_2)<d(\gamma_i(N),\gamma_j(N))$ and $\gamma_i(t_2)\neq \gamma_j(t_2)$ for every $i\neq j$
(if $\gamma_i(t_2) = \gamma_j(t_2)$ for some $i\neq j$,
then $d(\gamma_i(N),\gamma_j(N))\leq d(\gamma_i(N),\gamma_i(t_2))+ d(\gamma_i(t_2),\gamma_j(t_2))+ d(\gamma_j(t_2),\gamma_j(N))=2(N-t_2)$ for every $N \ge t_2$, a contradiction).

Since for every $i\in I$, $\gamma_i(t_1)$ must be a vertex in $B(\sigma_0(t_1),\delta_1)$ and $\Gamma$ is $\mu$-uniform for some constant $\mu\in\NN$,
then $\gamma_i(t_1)$ is contained in a set of at most $\mu^{\delta_1}$ possible vertices in $S(x_0,t_1)$.
Also, since $\Gamma$ is $\mu$-uniform, if $t_2 \ge t_1$, then for every vertex
$w\in S(x_0,t_1)$ there exist at most $\mu^{t_2-t_1}$ vertices $w'_k$ in $S(x_0,t_2)$ such that $w\in [x_0w'_k]$. Therefore,
$$
\begin{aligned}
|A_{\varepsilon}\cap B([\sigma_0],K\varepsilon)|
& = |I|\leq \mu^{\delta_1} \cdot \mu^{t_2-t_1}
= \mu^{\delta_1+\lceil-\log_a{\frac{\varepsilon}{c_3}}\rceil - \lfloor-\log_a{\frac{K\varepsilon}{c_1}}-2\d \rfloor}
\\
& \le \mu^{\delta_1+2-\log_a{\frac{\varepsilon}{c_3}} + \log_a{\frac{K\varepsilon}{c_1}} + 2\d}
= \mu^{2+\delta_1+2\d+\log_a{\frac{c_3K}{c_1}}},
\end{aligned}
$$
which only depends on $K$ and some fixed constants ($\d,\mu,a,c_1$ and $c_2$).
If $t_2 < t_1$, then a similar and simpler argument gives $|A_{\varepsilon}\cap B([\sigma_0],K\varepsilon)| = |I|\leq \mu^{\delta_1} $.

Finally, consider the case $t_1<0$.
Thus, $-\log_a{\frac{K\varepsilon}{c_1}}- 2\d<0$ and $-\log_a \varepsilon < 2\d + \log_a{\frac{K}{c_1}}$.
Since $\Gamma$ is $\mu$-uniform and $\gamma_i(t_2)\neq \gamma_j(t_2)$ for every $i\neq j$, we have
$$
|A_{\varepsilon}\cap B([\sigma_0],K\varepsilon)|
 = |I|\leq \mu^{t_2}
= \mu^{\lceil-\log_a{\frac{\varepsilon}{c_3}}\rceil}
\le \mu^{1-\log_a{\frac{\varepsilon}{c_3}}}
< \mu^{1+2\d +\log_a{\frac{c_3K}{c_1}}}.
$$
\end{proof}

\begin{theorem} \label{t:iigraph}
Given a hyperbolic uniform graph $\Gamma$ with a pole, then $h(\Gamma)>0$ if and only if $\partial_\infty \Gamma$ is uniformly perfect for some visual metric.
\end{theorem}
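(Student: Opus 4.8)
The plan is to reduce the statement to the truncated hyperbolic approximation of the boundary, and then to extract from that approximation a spanning tree to which the results of Section 3 apply. Fix any visual metric $d$ on $\partial_\infty\Gamma$ and write $X:=(\partial_\infty\Gamma,d)$. Since $\Gamma$ is a hyperbolic uniform graph with a pole, Proposition \ref{Prop: bounded} shows that $X$ has strongly bounded geometry, so Proposition \ref{Prop:hyperbolic-equiv} gives
$$
h(\Gamma)>0 \iff h\big(\mathcal H^t(X)\big)>0 .
$$
By Theorem \ref{t:equalboundaries} the metric $d$ is itself a visual metric on $\partial_\infty\mathcal H^t(X)=X$, so the problem becomes: for a complete bounded metric space $X$ with strongly bounded geometry, $h(\mathcal H^t(X))>0$ if and only if $X$ is uniformly perfect. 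Proving this for the fixed $d$ yields both directions of the theorem in the ``for some visual metric'' formulation.

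For the implication ``uniformly perfect $\Rightarrow h>0$'' I would \emph{not} work directly with the Laplacian on $\mathcal H^t(X)$, since the overlapping balls allow a vertex to have several radial neighbours on the level above. Instead I would build a radial spanning tree $T\subseteq\mathcal H^t(X)$: keep all vertices, and attach each vertex $B(a,2r^{k+1})$ of level $k+1$ by a radial edge to a single level-$k$ ball containing it (for instance one whose centre is nearest to $a$, which exists because $r\le\frac16$). The root is the natural base point of $\mathcal H^t(X)$. Then $T$ is uniform (a subgraph of the uniform graph $\mathcal H^t(X)$), geodesically complete and has its root as a pole, because every vertex has a radial neighbour one level below. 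The key step is to show that uniform perfectness of $X$ forces $T$ to be pseudo-regular: given a vertex $v$ of level $k$, Lemma \ref{lema: pseudo-regular} produces two points of $X$ inside $B(v)$ that are separated on the scale of $B(v)$; choosing the parameter $r$ small in terms of the uniform-perfectness constant and $K$ suitably, these two points are traced down their nearest-parent chains to two \emph{distinct} descendants of $v$ at level $k+K$. Proposition \ref{p:caract_complete} then gives $h(T)>0$, and since $T$ has a subset of the edges of $\mathcal H^t(X)$ we have $\partial_T A\subseteq\partial_{\mathcal H^t(X)}A$ for every finite $A$, whence $h(\mathcal H^t(X))\ge h(T)>0$.

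For the converse I would argue by contraposition, imitating the bad-set construction at the end of Proposition \ref{p:caract_complete}. If $X$ is not uniformly perfect, then for every $S$ and every scale there are a point $x$ and a radius $\varepsilon$ with an empty annulus $\{y:\varepsilon/S<d(x,y)\le\varepsilon\}$ that is as wide as we wish. Such an annular gap forces a long ``tube'' in $\mathcal H^t(X)$: across the corresponding consecutive levels all $X$-points near $x$ stay clustered, so by strongly bounded geometry each level contributes only boundedly many vertices and essentially no branching occurs. Collecting the vertices of such a tube yields finite sets $A$ with $|A|$ arbitrarily large but $|\partial A|$ bounded, exactly as the sets $A^K$ with $|A^K|=K$ and $|\partial A^K|=2$ in the tree case. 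Letting the gap sharpen gives $|\partial A|/|A|\to 0$, so $h(\mathcal H^t(X))=0$ and therefore $h(\Gamma)=0$.

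The main obstacle is the pseudo-regularity step in the first implication: converting the purely metric statement of uniform perfectness of $X$ into the combinatorial statement that the chosen spanning tree $T$ branches at every vertex. The difficulty is that the balls defining the approximation overlap, so one must guarantee that the two separated points furnished by Lemma \ref{lema: pseudo-regular} descend through nearest-parent chains that remain inside $v$ and separate at a controlled level $k+K$; this is precisely where the freedom in the parameter $r$ and in the choice of $K$ has to be spent carefully. A secondary technical point is quantifying, in the converse, that an annular gap in $X$ produces a tube in $\mathcal H^t(X)$ of bounded horizontal width, which again relies on strongly bounded geometry.
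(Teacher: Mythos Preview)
Your overall strategy is sound and, in the forward direction, genuinely different from the paper's. The paper does \emph{not} pass to a spanning tree. Instead it stays on a (re-parametrised) truncated hyperbolic approximation $\Gamma''=\mathcal H^t(X)$ with parameter $r'=r^s$ and applies Proposition~\ref{p:sufficient} directly with the level function $f$. The point is that, by iterating Lemma~\ref{lema: pseudo-regular} $t$ times, uniform perfectness produces $2^t$ well-separated points inside each $B(v)$; after choosing $s$ so that these points land in distinct balls at level $k+s$, one obtains $>M_1$ radial neighbours below $v$, while strongly bounded geometry bounds the number of horizontal and upward neighbours by $M_1$ each. Hence $(\Delta f)(v)\ge \frac{2^t-M_1}{2^t+2M_1}>0$ and Proposition~\ref{p:sufficient} gives $h(\Gamma'')>0$. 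Your route---build a nearest-parent radial spanning tree $T$, prove it is $K$-pseudo-regular, invoke Proposition~\ref{p:caract_complete}, and then use the elementary monotonicity $h(\mathcal H^t(X))\ge h(T)$ from $\partial_T A\subseteq \partial_{\mathcal H^t(X)}A$---is more conceptual and recycles Section~3 wholesale. The obstacle you flag is real but tractable: taking the uniform-perfectness scale $\varepsilon=r^k/4$, the total drift of the nearest-parent chain from level $k+s$ back to level $k+1$ is at most $\sum_{j=1}^{s-1}r^{k+j}<r^{k+1}/(1-r)\le r^k/5$, so the level-$(k+1)$ ancestor stays within $r^k/2$ of the centre of $v$, which forces its nearest parent in $A_k$ to be $v$ itself. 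No smallness of $r$ beyond $r\le\tfrac16$ is needed; only $K=s$ with $r^s<1/(8R)$.

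For the converse your idea coincides with the paper's, but one attribution is off: the fact that the tube has bounded horizontal width does \emph{not} come from strongly bounded geometry. It comes from the empty annulus together with $r^k$-separation of $A_k$: if $2r^{n+2}\varepsilon<r^k<r\varepsilon$, any centre in $A_k\cap \bar B(x_n,\varepsilon)$ must lie in $\bar B(x_n,r^{n+2}\varepsilon)$, and a ball of radius $r^{n+2}\varepsilon<r^k/2$ can contain at most one $r^k$-separated point. The paper in fact shows the tube has width exactly one and that each $v_k$ has $N(v_k)=\{v_{k-1},v_{k+1}\}$, giving $|\partial B_n|\le 2\mu$ against $|B_n|\ge n$. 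Your coarser ``boundedly many per level'' version would also work, but you should justify that no edges escape the tube (again from the annular gap, not from bounded geometry).
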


\begin{proof}
Let $v_0$ be a pole of $\G$.
Fix a visual metric $d$ on $\partial_\infty \Gamma$. Then, by Proposition \ref{Prop: bounded},
$(\partial_\infty \Gamma,d)$ has strongly bounded geometry.

Consider $\Gamma'=\mathcal{H}^t(\partial_\infty \Gamma)$ any truncated hyperbolic approximation with parameter $r$, $r^k$-approximations $A_k$ and corresponding vertices $V_k$.
By Proposition \ref{Prop:hyperbolic-equiv}, $h(\Gamma)>0$ if and only if $h(\Gamma')>0$.

Suppose $\partial_\infty \Gamma$ is uniformly perfect and let us prove, using Proposition \ref{p:sufficient}, that $h(\Gamma')>0$.

First, since $\partial_\infty \Gamma=\partial_\infty \Gamma'$ by Theorem \ref{t:equalboundaries}, this boundary with the fixed metric $d$ has strongly bounded geometry.
Therefore, let $M_1>0$ be such that for every $\varepsilon >0$, any $\varepsilon$-approximation of $\partial_\infty \Gamma'$, $A$, and any $x\in \partial_\infty \Gamma'$,
$|A \cap B(x,5\varepsilon)|< M_1$. In particular, $|A \cap \bar{B}(x,4\varepsilon)|< M_1$.

Let $v$ be any vertex in $\Gamma'$ and suppose that $B(v)=B(x,2r^k)$.
Since $\partial_\infty \Gamma=\partial_\infty \Gamma'$ is uniformly perfect, then, by Lemma \ref{lema: pseudo-regular} there exist two points $y_1,y_2$ in $\bar{B}(x,r^k)$ such that
$d(y_1,y_2)>\frac{1}{R}r^k$ for some $R>1$
(since $\Gamma'$ is a truncated hyperbolic approximation, by Remark \ref{r:e1}, without loss of generality we can assume that $r^m \le \varepsilon_0$ for every $m$ with $A_m\neq\emptyset$).

Now, in each ball $\bar{B}(y_{i_1},\frac{1}{3R}r^k)$ with $i_1\in \{1,2\}$, again by Lemma \ref{lema: pseudo-regular}, there exist $y_{i_1,1},y_{i_1,2}$  such that
$d(y_{i_1,1},y_{i_1,2})>\frac{1}{3R^2}r^k$. Also, since $d(y_1,y_2)>\frac{1}{R}r^k$ and $d(y_{i_1},y_{i_1,i_2})\le \frac{1}{3R}r^k$ for every $i_1,i_2 \in \{1,2\}$, then $d(y_{1,i_2},y_{2,j_2})>\frac{1}{3R}r^k$ for every $i_2,j_2 \in \{1,2\}$.

If $2^2>M_1$ we are done. Otherwise, we repeat the process. Thus, for every point $y_{i_1,i_2,\dots,i_{t-1}}$, in each ball $\bar{B}(y_{i_1,i_2,\dots,i_{t-1}},\frac{1}{3^{t-1}R^{t-1}}r^k)$ there exist two points $y_{i_1,i_2,\dots,i_{t-1},1}, y_{i_1,i_2,\dots,i_{t-1},2}$ such that
$$d(y_{i_1,i_2,\dots,i_{t-1},1}, y_{i_1,i_2,\dots,i_{t-1},2})> \frac{1}{3^{t-1}R^{t}}r^k$$
and
$$d(y_{i_1,i_2,\dots,i_{t-1},i_{t}}, y_{j_1,j_2,\dots,j_{t-1},j_{t}})> \frac{1}{3^{t-1}R^{t-1}}r^k$$
if $(i_1,i_2,\dots,i_{t-1})\neq (j_1,j_2,\dots,j_{t-1})$ and for every $i_{t},j_{t}\in \{1,2\}$. Let us do this until $2^t>M_1$ for some  $t\in \mathbb{N}$.

Since $r\leq \frac{1}{6}$, there exists $s\in \mathbb{N}$ such that $3r^s<\frac{1}{3^tR^t}$. Therefore,
$$3r^{k+s}<\frac{1}{3^tR^t}r^k<d(y_{i_1,i_2,\dots,i_{t-1},i_{t}},y_{j_1,j_2,\dots,j_{t-1},j_{t}})$$
for every $(i_1,i_2,\dots,i_{t})\neq (j_1,j_2,\dots,j_{t})$. Let $A_{k+s}$ be the $r^{k+s}$-approximation of $X$ in the construction of the hyperbolic approximation
$\Gamma'$. Thus, for every $(i_1,i_2,\dots,i_{t})$, $A_{k+s}\cap B(y_{i_1,i_2,\dots,i_{t}},r^{k+s})$ contains at least one point, $z_{i_1,i_2,\dots,i_{t}}$, and $z_{i_1,i_2,\dots,i_{t}}\neq z_{j_1,j_2,\dots,j_{t}}$ for every $(i_1,i_2,\dots,i_{t})\neq (j_1,j_2,\dots,j_{t})$.  Then, we obtain a set of $2^t$ points, $\mathcal{Z}=\{z_1,z_2,\dots,z_{2^t}\}\subset A_{k+s}$.

Let us see that for every $(i_1,i_2,\dots,i_{t})$, $B(z_{i_1,i_2,\dots,i_{t}},2r^{k+s})\subset B(x,2r^k)$. It suffices to see that for every $p\in B(z_{i_1,i_2,\dots,i_{t}},2r^{k+s})$,
$$
\begin{aligned}
d(p,x)
&\leq  d(p,z_{i_1,i_2,\dots,i_{t}})+d(z_{i_1,i_2,\dots,i_{t}},y_{i_1,i_2,\dots,i_{t}}) +
d(y_{i_1,i_2,\dots,i_{t}},y_{i_1,i_2,\dots,i_{t-1}})+\cdots + d(y_{i_1,i_2},y_{i_1}) +d(y_{i_1},x)
\\
& < 2r^{k+s} +r^{k+s} + \frac{1}{3^{t-1}R^{t-1}}r^k + \cdots + \frac{1}{3R}r^k + r^k<
\Big(\frac{1}{3^{t}R^{t}} + \frac{1}{3^{t-1}R^{t-1}} + \cdots + \frac{1}{3R} +1\Big)r^k
\\
&=
\frac{1-\frac{1}{3^{t+1}R^{t+1}}}{1-\frac{1}{3R}}\,r^k= \frac{3^{t+1}R^{t+1}-1}{3^{t}R^{t}(3R-1)}\,r^k< \frac{3R}{3R-1}\,r^k< 2r^k.
\end{aligned}
$$

Let us consider the truncated hyperbolic approximation  $\Gamma''=\mathcal{H}^t(\partial_\infty \Gamma)$ with parameter $r':=r^s$ and such that $A'_k:=A_{s\cdot k}$. Since these are two hyperbolic approximations of the same metric space, by Theorem
\ref{th:quasi-isometric}, $\Gamma''$ is quasi-isometric to $\Gamma'$ and by Lemma \ref{l:quasi-isometric}, it suffices
to check that $h(\Gamma'')>0$.

Consider $f:V(\Gamma'') \to \mathbb{R}$ such that for every $v\in V'_k$, $f(v)=k$.
It is immediate to check that $|\nabla_{a_ia_{i+1}}f|\leq 1$ for every pair of vertices $a_i,a_{i+1}\in V(\Gamma'')$.
Let $v$ be any vertex in $\Gamma''$ and suppose that $B(v)=B(x,2(r')^k)$.
Since $\partial_\infty \Gamma=\partial_\infty \Gamma''$, we have $|A'_m \cap \bar{B}(x',4(r')^m)|< M_1$ for any $x'\in X$ and $A'_m \neq \emptyset$.
Therefore, there are less than $M_1$ vertices in $V'_k$ adjacent to $v$.
Consider two vertices $w,w'$ in $V'_{k-1}$ adjacent to $v$.
If $B(w)=B(p,2(r')^{k-1})$ and $B(w')=B(q,2(r')^{k-1})$, then $B(x,2(r')^k)\subset B(p,2(r')^{k-1})\cap B(q,2(r')^{k-1})$ and $d(p,q)< 4(r')^{k-1}$.
Therefore, it follows that there exist at most $M_1$ vertices in $V'_{k-1}$ adjacent to $v$. Also, as we saw above, there exist $n\geq 2^t>M_1$ vertices in $V_{k+1}'$ adjacent to $v$. Thus, $(\Delta f)(a_i)\geq \frac{n-M_1}{n+2M_1} \geq \frac{2^t-M_1}{2^t+2M_1}>0$.

Therefore, by Proposition \ref{p:sufficient}, $h(\Gamma'')>0$.

Now, suppose $(\partial_\infty \Gamma,d)$ is not uniformly perfect and let $\Gamma'$ be the truncated hyperbolic approximation of  $(\partial_\infty \Gamma,d)$ with parameter $r$ described above. Then, for every $n\in \NN$ there exist $0<\varepsilon<\frac{1}{n}$ and $x_n\in \partial_\infty \Gamma$ such that $\bar{B}(x_n,\varepsilon)\backslash \bar{B}(x_n,r^{n+2}\varepsilon)=\emptyset$.

Claim: For every $k \in \NN$ such that $2r^{n+2}\varepsilon < r^k < r\varepsilon$  there is a  vertex $v_k\in V_k$ such that $x_n\in B(v_k)$.
Furthermore, $A_{k}\cap \bar{B}(x_n,\varepsilon)=\{a_{k}\}$ and $N(v_k)\cap V_k=\emptyset$.

Let us prove this claim.
Since $A_k$ is $r^k$-dense and $r^k < r\varepsilon$ there must be a point $a_k$ in $A_k\cap B(x_n,r\varepsilon)$.
Since, $\bar{B}(x_n,\varepsilon)\backslash \bar{B}(x_n,r^{n+2}\varepsilon)=\emptyset$, it follows that $a_k\in A_k\cap \bar{B}(x_n,r^{n+2}\varepsilon)$.
Therefore, $v_k$ with $B(v_k)=B(a_k,2r^k)$ satisfies the first part of the claim.
Also, since $A_k$ is $r^k$-separated and $r^k> 2r^{n+2}\varepsilon$, then $A_k\cap \bar{B}(x_n,r^{n+2}\varepsilon)=\{a_k\}$.
Since $\bar{B}(x_n,\varepsilon)\backslash \bar{B}(x_n,r^{n+2}\varepsilon)=\emptyset$,
we have $A_{k}\cap \bar{B}(x_n,\varepsilon)=\{a_{k}\}$.
Seeking for a contradiction, assume that there exists $w_k\in V_k$ with $w_k\in N(v_k)$. Let $B(w_k)=B(a'_k,2r^k)$ for some $a_k\neq a'_k\in A_k$. Thus,
$d(a'_k,a_k)\leq 4r^k$ and  $d(a'_k,x_n)\leq 4r^k + r^{n+2}\varepsilon < (4r+r^{n+2})\varepsilon<\varepsilon$.
Therefore, $a'_k\in \bar{B}(x_n,\varepsilon)$, contradicting $A_{k}\cap \bar{B}(x_n,\varepsilon)=\{a_{k}\}$.
Hence, the claim holds.

Let $k_1=\min\{k\, | \, r^k < r\varepsilon\}$ and $k_2=\max\{k \, | \, 2r^{n+2}\varepsilon < r^k\}$.
Notice that since $r^{k_1}<r\varepsilon \leq r^{k_1-1}$, then  $r^{k_1+n+1}<r^{n+2}\varepsilon \leq r^{k_1+n}$ and since $2<6 \leq r^{-1}$,
we have $2r^{n+2}\varepsilon < r^{k_1+n-1}$. Therefore, $k_2-k_1\geq n-1$.

Assume that $k_1<k<k_2$ and $v\in V_{k-1}\setminus\{v_{k-1}\}$, and consider $a$ with $B(v)=B(a,2r^{k-1})$.
Since $k_1 \le k-1<k_2$, we have $2r^{n+2}\varepsilon < r^{k-1} < r\varepsilon$.
Seeking for a contradiction, assume that $v\in N(v_k)$.
Thus,
$B(a_k,2r^{k})\subseteq  B(a,2r^{k-1})$,
$d(a,a_{k}) < 2r^{k-1}$ and $d(a,x_n) < 2r^{k-1} + r^{n+2}\varepsilon < (2r+r^{n+2})\, \varepsilon<\varepsilon$.
Therefore, $a \in \bar{B}(x_n,\varepsilon)$, contradicting $A_{k-1}\cap \bar{B}(x_n,\varepsilon)=\{a_{k-1}\}$.
Hence, $v\notin N(v_k)$.
A similar argument shows that if $v\in V_{k+1}\setminus\{v_{k+1}\}$, then $v\notin N(v_k)$.
Recall that, since $r\leq \frac{1}{6}$, every vertex in $V_m$ has a neighbor in $V_{m+1}$ for any $m$.
These facts and the claim give $N(v_k)=\{v_{k-1},v_{k+1}\}$ for every $k_1<k<k_2$.

Let $B_n=\{v_k | \, 2r^{n+2}\varepsilon < r^k < r\varepsilon \}=\{v_k \}_{k=k_1}^{k_2}$. Thus, $|B_n|=k_2-k_1+1\geq n$.
Propositions \ref{Prop:uniform} and \ref{Prop: bounded} give that there exists $\mu\in\NN$ such that $\G'$ is $\mu$-uniform.
Since $N(v_k)=\{v_{k-1},v_{k+1}\}$ for every $k_1<k<k_2$, we have $|\partial B_n|\leq 2\mu$.
Therefore, $\frac{|\partial B_n|}{|B_n|}\leq \frac{2\mu}{n}$ with $n$ arbitrarily large and we conclude $h(\Gamma')=0$.
Hence, by Theorem \ref{th:quasi-isometric} and Lemma \ref{l:quasi-isometric}, $h(\Gamma)=0$.
\end{proof}

\section{Hyperbolic manifolds}

Let us recall the following definition from \cite{K}.
Let $X$ be a complete Riemannian manifold and denote by $d$ the induced metric.
Given any $\varepsilon$-approximation $A_\varepsilon$ of $X$,
the graph $\Gamma_{A_\varepsilon}=(V,E)$ with $V=A_\varepsilon$ and $E:=\{xy \, | \, x,y \in A_\varepsilon \mbox{ with } 0<d(x,y)\leq 2\varepsilon\}$ is called an $\varepsilon$-\emph{net}.

\begin{proposition}\cite[Lemma 4.5]{K} \label{Prop: Kanai_net} Suppose that $X$ is a complete Riemannian manifold with bounded local geometry and let $\Gamma$ be an $\varepsilon$-net in $X$.
Then, $h(X)>0$ if and only if $h(\Gamma)>0$.
\end{proposition}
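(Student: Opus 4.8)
The plan is to prove the equivalence by a discretization argument that compares $n$-dimensional volumes with cardinalities of net points and, crucially, the perimeter $\Vol_{n-1}(\partial A)$ with the combinatorial boundary in $\Gamma$. First I would record the elementary consequences of bounded local geometry. Using the bilipschitz charts $F\colon B(x,r)\to\RR^n$ one gets, for each fixed $\rho\le r$, two-sided bounds $v_-(\rho)\le \Vol_n(B(x,\rho))\le v_+(\rho)$ uniform in $x$, a uniform upper bound for the perimeter of geodesic balls of radius $\le r$, and a \emph{relative isoperimetric inequality} on balls: transferring the Euclidean one through $F$, there is $\kappa>0$ with $\min\{\Vol_n(E\cap B),\Vol_n(B\setminus E)\}\le \kappa^{-1}\Vol_{n-1}(\partial E\cap B)^{n/(n-1)}$ for $B=B(x,\rho)$, $\rho\le r$ (for larger radii one covers by finitely many $r$-balls). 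A packing argument---the balls $B(p,\varepsilon/2)$, $p\in A_\varepsilon$, are disjoint and any neighbour lies in $B(p,3\varepsilon)$---shows $\Gamma$ is $\mu$-uniform; and the cover $\{B(p,\varepsilon)\}_{p\in A_\varepsilon}$ has bounded overlap, each point lying in at most $N$ balls.

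For $h(\Gamma)>0\Rightarrow h(X)>0$, fix a bounded open $A$ of finite perimeter and discretize by the majority rule $\tilde A:=\{p\in A_\varepsilon : \Vol_n(B(p,\varepsilon)\cap A)\ge\frac12\Vol_n(B(p,\varepsilon))\}$, which is finite. Since the $\varepsilon$-balls cover $X$, $A$ is covered by the balls meeting it; those contributing only a minority to $A$ straddle $\partial A$, so the relative isoperimetric inequality forces $\Vol_{n-1}(\partial A\cap B(p,\varepsilon))\ge\kappa'>0$ for each of them, and bounded overlap bounds their number by $\kappa'^{-1}N\,\Vol_{n-1}(\partial A)$. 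This yields $\Vol_n(A)\le v_+(\varepsilon)\bigl(|\tilde A|+\kappa'^{-1}N\,\Vol_{n-1}(\partial A)\bigr)$. Likewise, if $q\in\partial\tilde A$ is adjacent in $\Gamma$ to some $p\in\tilde A$, the ball $B(q,3\varepsilon)\supseteq B(p,\varepsilon)$ has substantial measure in both $A$ and its complement, so $\Vol_{n-1}(\partial A\cap B(q,3\varepsilon))\ge\kappa''>0$; bounded overlap then gives $|\partial\tilde A|\le C\,\Vol_{n-1}(\partial A)$. Applying $|\tilde A|\le h(\Gamma)^{-1}|\partial\tilde A|$ and combining produces $\Vol_n(A)\le C'\,\Vol_{n-1}(\partial A)$, hence $h(X)>0$.

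For the converse, fix a finite $S\subset A_\varepsilon$ and fatten it to an open set $\Omega$. To control the perimeter I would not use a fixed radius but choose, for each $p\in S$, a good radius $\rho_p\in(\varepsilon,2\varepsilon)$ by the coarea formula applied to the distance function: since $\int_\varepsilon^{2\varepsilon}\Vol_{n-1}(\partial B(p,\rho))\,d\rho\le\Vol_n(B(p,2\varepsilon))\le v_+(2\varepsilon)$, some $\rho_p$ satisfies $\Vol_{n-1}(\partial B(p,\rho_p))\le v_+(2\varepsilon)/\varepsilon$. Put $\Omega:=\bigcup_{p\in S}B(p,\rho_p)$. The disjoint balls $B(p,\varepsilon/2)\subset\Omega$ give $\Vol_n(\Omega)\ge|S|\,v_-(\varepsilon/2)$. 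For the boundary, $\partial\Omega\subset\bigcup_{p\in S}\partial B(p,\rho_p)$; a point of $\partial\Omega$ lies within $\varepsilon$ of a net point $q_0$, and since $\rho_{q_0}>\varepsilon$ we must have $q_0\notin S$, so the sphere carrying that point is centred at a point of $S$ within $3\varepsilon$ of $S^c$---a set of at most $C''|\partial S|$ points by uniformity. Thus $\Vol_{n-1}(\partial\Omega)\le C'''|\partial S|$, and $\Vol_n(\Omega)\le h(X)^{-1}\Vol_{n-1}(\partial\Omega)$ gives $|S|\le C''''|\partial S|$, i.e.\ $h(\Gamma)>0$.

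The main obstacle is exactly these two translations between the continuous perimeter $\Vol_{n-1}(\partial A)$ and the combinatorial boundary. The forward direction needs a \emph{uniform} relative isoperimetric inequality on balls (to charge each straddling net point a definite amount of perimeter) together with bounded overlap, while the backward direction needs the coarea averaging trick to produce fattened regions whose perimeter is genuinely controlled: an arbitrary metric sphere need not have bounded $(n-1)$-measure, so balls of a single fixed radius will not do. Both tools rest on the bilipschitz charts furnished by bounded local geometry, which is precisely where that hypothesis enters.
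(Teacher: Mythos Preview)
The paper does not supply its own proof: the proposition is quoted as Kanai's Lemma~4.5, with only the remark that Kanai's argument (originally stated under positive injectivity radius and a Ricci lower bound) carries over to bounded local geometry because the only geometric input it uses is a uniform two-sided bound on the volume of small balls. Kanai's argument proceeds through the coarea characterization of $h$ and discretization/smoothing operators on \emph{functions} rather than on sets, so your direct set-theoretic comparison is already a different route.

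Your direction $h(X)>0\Rightarrow h(\Gamma)>0$ is sound; the coarea choice of good radii $\rho_p$ is exactly what is needed. The other direction, however, has a genuine gap. You claim that for each minority ball the relative isoperimetric inequality forces $\Vol_{n-1}(\partial A\cap B(p,\varepsilon))\ge\kappa'>0$. This is false: a minority ball has $0<\Vol_n(A\cap B(p,\varepsilon))<\tfrac12\Vol_n(B(p,\varepsilon))$, and the lower end can be arbitrarily small (take $A$ a tiny ball near $p$), so the relative isoperimetric inequality only yields $\Vol_{n-1}(\partial A\cap B)\ge\big(\kappa\,\Vol_n(A\cap B)\big)^{(n-1)/n}$, with no uniform floor. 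Consequently you cannot bound the number of minority balls this way, and your displayed estimate $\Vol_n(A)\le v_+(\varepsilon)\bigl(|\tilde A|+\kappa'^{-1}N\,\Vol_{n-1}(\partial A)\bigr)$ does not follow. Note that the parallel step for $|\partial\tilde A|$ \emph{is} correct, precisely because for $q\in\partial\tilde A$ both $A$ and $A^c$ occupy at least $\tfrac12 v_-(\varepsilon)$ of the enlarged ball $B(q,3\varepsilon)$; for an arbitrary minority ball only $A^c$ is guaranteed to be large. One repair is to observe that each minority ball meets $\partial A$, so its center lies in the $\varepsilon$-tube around $\partial A$, and to bound the number of $\varepsilon$-separated points in that tube by a constant times $\Vol_{n-1}(\partial A)$; another is to abandon the set discretization in this direction and follow Kanai's functional route.
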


Note that the results in \cite{K} require $M$ to have positive injectivity radius and a lower bound on its Ricci curvature instead of bounded local geometry,
but the proofs in \cite{K} just use that there are uniform lower and upper bounds for the volume of the balls $B(x,r)$ which do not depend on $x\in M$ for $0<r<r_0$
(and we have these uniform bounds with bounded local geometry).
Hence, the results in \cite{K} also hold with the weaker hypothesis of bounded local geometry.

\begin{proposition}\cite[Lemma 2.5]{K} \label{Prop: net_qi}
Suppose that $X$ is a complete Riemannian manifold with bounded local geometry and let $\Gamma$ be an $\varepsilon$-net in $X$.
Then, $X$ and $\Gamma$ are quasi-isometric.
\end{proposition}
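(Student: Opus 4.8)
The plan is to show that the inclusion $f\colon \Gamma \to X$, sending each vertex $a\in A_\varepsilon$ to the corresponding point $a\in X$, is an $\varepsilon$-full quasi-isometric embedding. Write $d$ for the Riemannian distance and $d_\Gamma$ for the path metric of the net. Fullness is immediate: since $A_\varepsilon$ is a maximal $\varepsilon$-separated set it is $\varepsilon$-dense, so every $x\in X$ lies within $\varepsilon$ of some vertex. One of the two quasi-isometry inequalities is also easy: if two vertices are joined by an edge, then $d(a,b)\le 2\varepsilon$ by definition of the net, so any $\Gamma$-path of length $n$ joining $a$ to $b$ projects to a chain in $X$ of total length at most $2\varepsilon n$, giving $d(a,b)\le 2\varepsilon\, d_\Gamma(a,b)$ for all $a,b\in A_\varepsilon$.

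The real content is the reverse estimate $d_\Gamma(a,b)\le C_1 d(a,b)+C_2$. Given $a,b\in A_\varepsilon$, I would take a minimizing geodesic $\gamma\colon[0,L]\to X$ with $\gamma(0)=a$, $\gamma(L)=b$ and $L=d(a,b)$, and mark the points $p_i=\gamma(i\varepsilon)$ for $0\le i<k$ together with $p_k=b$, where $k=\lceil L/\varepsilon\rceil$, so that $d(p_i,p_{i+1})\le\varepsilon$. Choosing, by density, vertices $a_i\in A_\varepsilon$ with $a_0=a$, $a_k=b$ and $d(p_i,a_i)<\varepsilon$, the triangle inequality yields $d(a_i,a_{i+1})<3\varepsilon$. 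Note that consecutive approximants need not be adjacent: the approximation error ($\varepsilon$ on each side) already matches the edge length $2\varepsilon$, so one cannot expect $d(a_i,a_{i+1})\le 2\varepsilon$. This mismatch is the main obstacle, and it forces the following local connectivity step.

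\emph{Sub-lemma.} There is $N\in\NN$, depending only on $\varepsilon$ and the bounded local geometry constants, such that any $a,b\in A_\varepsilon$ with $d(a,b)\le 3\varepsilon$ satisfy $d_\Gamma(a,b)\le N$. To prove it I would first invoke a packing bound: since $A_\varepsilon$ is $\varepsilon$-separated, the balls $B(a',\varepsilon/2)$ over vertices $a'\in A_\varepsilon\cap B(a,4\varepsilon)$ are pairwise disjoint, and the uniform local volume comparison provided by bounded local geometry yields $M:=\sup_x|A_\varepsilon\cap B(x,4\varepsilon)|<\infty$ (this is precisely where the hypothesis on $X$ is used). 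Next, for a geodesic $\gamma$ from $a$ to $b$ of length $\le 3\varepsilon$ and each vertex $c\in A_\varepsilon\cap B(a,4\varepsilon)$, consider the open set $U_c=\{t\in[0,L]\colon d(\gamma(t),c)<\varepsilon\}$. Density makes $\{U_c\}$ a finite open cover of the connected set $[0,L]$, with $0\in U_a$ and $L\in U_b$; since the overlap graph of an open cover of a connected space is connected, there is a chain $a=c_0,c_1,\dots,c_r=b$ of these vertices with $U_{c_j}\cap U_{c_{j+1}}\neq\emptyset$ and $r\le M$. Overlap means there is $t$ with $d(\gamma(t),c_j)<\varepsilon$ and $d(\gamma(t),c_{j+1})<\varepsilon$, whence $d(c_j,c_{j+1})<2\varepsilon$ and $c_jc_{j+1}\in E(\Gamma)$. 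Thus $d_\Gamma(a,b)\le r\le M=:N$.

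Finally I would assemble the pieces. Applying the sub-lemma to each consecutive pair gives $d_\Gamma(a,b)\le\sum_{i=0}^{k-1}d_\Gamma(a_i,a_{i+1})\le Nk\le \tfrac{N}{\varepsilon}d(a,b)+N$; in particular $\Gamma$ is connected. Together with $d(a,b)\le 2\varepsilon\,d_\Gamma(a,b)$ from the first paragraph, setting $\alpha=\max\{N/\varepsilon,\,2\varepsilon,\,1\}$ and $\beta=\varepsilon$ gives $\alpha^{-1}d_\Gamma(a,b)-\beta\le d(a,b)\le\alpha\,d_\Gamma(a,b)+\beta$, so $f$ is an $(\alpha,\beta)$-quasi-isometric embedding. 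Being also $\varepsilon$-full, $f$ is a quasi-isometry, and hence $X$ and $\Gamma$ are quasi-isometric. The only delicate point is the sub-lemma, i.e.\ controlling the local graph distance despite the mismatch between the $\varepsilon$-approximation error and the $2\varepsilon$ edge length; everything else is routine bookkeeping.
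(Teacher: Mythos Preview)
The paper does not supply a proof of this proposition: it is quoted verbatim as \cite[Lemma 2.5]{K} and used as a black box, so there is no in-paper argument to compare against. Your proof is correct and is essentially the standard argument (and close in spirit to Kanai's): the inclusion is trivially $\varepsilon$-full and $1$-Lipschitz up to the factor $2\varepsilon$, while the reverse inequality is obtained by subdividing a minimizing geodesic and approximating the subdivision points by net vertices.

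The only genuinely nontrivial step is your sub-lemma, and your treatment is sound. The packing bound $M=\sup_x|A_\varepsilon\cap B(x,4\varepsilon)|<\infty$ is exactly where bounded local geometry enters (uniform two-sided volume bounds on small balls), and the nerve/overlap-graph argument for the cover $\{U_c\}$ of $[0,L]$ is a clean way to produce a $\Gamma$-chain with controlled length: connectedness of the nerve follows because a partition of the index set into two non-adjacent pieces would yield an open partition of $[0,L]$. One cosmetic point: the chain $c_0,\dots,c_r$ can be taken without repeated vertices, so in fact $r\le M-1$; your bound $r\le M$ is of course still valid. The final constants are assembled correctly (from $d_\Gamma\le \tfrac{N}{\varepsilon}d+N$ one gets $d\ge \tfrac{\varepsilon}{N}d_\Gamma-\varepsilon$, matching your choice $\beta=\varepsilon$).
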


\begin{theorem}\cite[p.88]{GH}\label{th: stability_hyp}
If $f:X \rightarrow Y$ is a quasi-isometry between geodesic spaces, then $X$ is hyperbolic if and only if $Y$ is hyperbolic.
\end{theorem}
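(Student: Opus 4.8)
The plan is to exploit the fact that being quasi-isometric is an equivalence relation, so that it suffices to prove a single implication: if $Y$ is hyperbolic then $X$ is hyperbolic. The reverse implication will then follow at once by running the same argument with a quasi-inverse of $f$ in place of $f$, this quasi-inverse being again a quasi-isometry. For the working characterization of hyperbolicity I would not use the Gromov product $\delta$-inequality directly, but rather the thin triangles condition supplied by Proposition \ref{Prop: insize}, equivalently the slim triangles condition that each side of a geodesic triangle lies in a uniform neighbourhood of the union of the other two sides; this is the formulation that behaves well under a quasi-isometric embedding.

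First I would fix constants so that $f$ is an $(\alpha,\beta)$-quasi-isometric embedding, and take an arbitrary geodesic triangle $\Delta$ in $X$ with vertices $x_1,x_2,x_3$ and geodesic sides $[x_ix_j]$. The point is that $f$ carries each geodesic $[x_ix_j]$ to an $(\alpha,\beta)$-quasi-geodesic of $Y$ joining $f(x_i)$ to $f(x_j)$, since by hypothesis $f$ distorts distances by no more than the quasi-isometry constants. Alongside these three quasi-geodesics I would introduce the genuine geodesic triangle $\Delta'$ of $Y$ with the same vertices $f(x_1),f(x_2),f(x_3)$.

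The decisive ingredient will be the stability of quasi-geodesics (the Morse lemma) in the hyperbolic space $Y$: there is a constant $R=R(\delta_Y,\alpha,\beta)$ such that each quasi-geodesic $f([x_ix_j])$ and the corresponding geodesic side of $\Delta'$ lie within Hausdorff distance $R$ of one another. Since $Y$ is hyperbolic, $\Delta'$ is $\delta'$-slim with $\delta'=\delta'(\delta_Y)$ by Proposition \ref{Prop: insize}; combining this with the $R$-fellow-travelling property I would deduce that the quasi-geodesic triangle $f([x_1x_2])\cup f([x_2x_3])\cup f([x_3x_1])$ is $(\delta'+2R)$-slim. Concretely, for any point $p$ lying on a side of $\Delta$, the image $f(p)$ is within $\delta'+2R$ of $f(q)$ for some point $q$ on one of the other two sides of $\Delta$, so that $d_Y(f(p),f(q))\le \delta'+2R$.

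Finally I would transfer this estimate back to $X$ through the lower quasi-isometry bound $\alpha^{-1}d_X(p,q)-\beta\le d_Y(f(p),f(q))$, obtaining $d_X(p,q)\le \alpha(\delta'+2R+\beta)$. This shows that every geodesic triangle of $X$ is slim with a constant depending only on $\alpha$, $\beta$ and $\delta_Y$, whence $X$ is hyperbolic by (the slim triangles form of) Proposition \ref{Prop: insize}. I expect the Morse stability lemma to be the only genuinely nontrivial step, and I would invoke it from the literature (see \cite{BH,GH}) rather than reprove it; all the remaining steps amount to bookkeeping with the quasi-isometry constants and the comparison between the quasi-geodesic and geodesic triangles.
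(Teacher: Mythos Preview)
The paper does not supply its own proof of this theorem: it is stated as a quotation from \cite[p.~88]{GH} and used as a black box. Your proposal, by contrast, sketches the standard argument that one finds in \cite{GH} or \cite[Chapter III.H]{BH}: push a geodesic triangle of $X$ forward to a quasi-geodesic triangle in $Y$, invoke the Morse/stability lemma (Theorem~\ref{th: stability_geod} in this paper) to replace each quasi-geodesic side by a nearby geodesic, use slimness of the resulting geodesic triangle in $Y$, and pull the estimate back through the lower quasi-isometry bound. This is correct and is exactly the route taken in the cited references; the only remark is that in the paper's logical order Theorem~\ref{th: stability_geod} is stated after Theorem~\ref{th: stability_hyp}, so if you were to insert your argument here you would want to cite the Morse lemma directly from \cite{BH,GH} rather than via the forward reference, to avoid any appearance of circularity.
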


Let us denote by $\mathcal{H}$ the Hausdorff distance.
Recall that a \emph{quasi-geodesic} is a quasi-isometric embedding $\sigma:I \rightarrow X$ where $I$ is an interval $I\subseteq \RR$.

\begin{theorem}\cite[Chapter III.H, Theorem 1.7 (Geodesic stability)]{BH}\label{th: stability_geod}
For all $\delta,\beta\ge 0$, $\alpha\geq 1$, there exists a constant $R=R(\delta, \alpha, \beta)$ with the following property:

If $X$ is a $\delta$-hyperbolic geodesic space, $\sigma$ is an $(\alpha,\beta)$-quasi-geodesic in $X$ and $[pq]$ is a geodesic segment joining the endpoints of $\sigma$,
then $\mathcal{H}([pq],\sigma) \le R$.
\end{theorem}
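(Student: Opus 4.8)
The plan is to prove this classical stability statement (the Morse lemma for quasi-geodesics) by the standard two-step strategy: first \emph{tame} $\sigma$ to a continuous path, then bound each of the two Hausdorff directions separately, reducing the harder direction to a self-improving logarithmic estimate. First I would replace the (possibly discontinuous) quasi-geodesic $\sigma:[a,b]\to X$ by a continuous piecewise-geodesic path $\sigma'$ with comparable constants. Choosing parameters $a=t_0<t_1<\dots<t_n=b$ of unit spacing and joining the consecutive values $\sigma(t_i),\sigma(t_{i+1})$ (which satisfy $d(\sigma(t_i),\sigma(t_{i+1}))\le \alpha+\beta$) by geodesic segments yields an $(\alpha',\beta')$-quasi-geodesic $\sigma'$ with $\alpha',\beta'$ depending only on $\alpha,\beta$, with $\mathcal{H}(\operatorname{im}\sigma,\operatorname{im}\sigma')\le \alpha+\beta$, and, crucially, with the \emph{tameness} property that the subpath of $\sigma'$ between two of its points at distance $d$ has length at most $\Lambda(d+1)$, where $\Lambda=\Lambda(\alpha,\beta)$. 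It then suffices to bound $\mathcal{H}([pq],\operatorname{im}\sigma')$.

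The core tool is a length estimate: in a $\delta$-hyperbolic space, if $c$ is a continuous rectifiable path and $[pq]$ joins its endpoints, then every $x\in[pq]$ satisfies $d(x,\operatorname{im}c)\le \delta_1\log_2 \ell(c)+C$. I would prove this by bisection. Using the thin-triangle form of hyperbolicity provided by Proposition \ref{Prop: insize} (with constant $\delta_1=\delta_1(\delta)$), the point $x$ lies within $\delta_1$ of one of the two sides of the geodesic triangle with vertices $p$, $q$ and the arc-length midpoint $c(m)$ of $c$. Recursing onto the side facing the half of $c$ of half the length, after about $\log_2\ell(c)$ steps the remaining subpath has length $\le 1$, so $x$ is within $\delta_1\log_2\ell(c)+C$ of a point of $c$.

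The main obstacle is the first Hausdorff direction. Set $D:=\sup_{x\in[pq]}d(x,\operatorname{im}\sigma')$, finite since $[pq]$ and $\operatorname{im}\sigma'$ are compact; the goal is to bound $D$ in terms of $\delta,\alpha,\beta$ alone, by a self-referential argument. Let $x_0\in[pq]$ realize $D$, pick $y,z\in[pq]$ with $d(y,x_0)=d(z,x_0)=2D$ on either side, and pick $y',z'\in\operatorname{im}\sigma'$ within $D$ of $y,z$ (when $x_0$ is near an end, use the endpoint $p$ or $q$ itself, which lies on $\sigma'$ and is automatically at distance $\ge D$ from $x_0$). The concatenation $c$ of $[y,y']$, the $\sigma'$-subpath from $y'$ to $z'$, and $[z',z]$ joins $y,z$, and tameness gives $\ell(c)\le 2D+\Lambda(6D+1)$, while $d(x_0,\operatorname{im}c)\ge D$ because $d(x_0,y)=d(x_0,z)=2D$ forces every point of $[y,y']$ and $[z',z]$ to lie at distance $\ge 2D-D=D$ from $x_0$, and the $\sigma'$-subpath is at distance $\ge D$ by definition of $D$. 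Applying the length estimate to $c$ and the subgeodesic $[yz]\subseteq[pq]$ through $x_0$ yields $D\le \delta_1\log_2\bigl(2D+\Lambda(6D+1)\bigr)+C$; since the right-hand side grows logarithmically in $D$, this forces $D\le D(\delta,\alpha,\beta)$. This self-bounding step, together with the bisection estimate it rests on, is where the real work lies.

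Finally, for the reverse inclusion I would run a maximal-excursion argument. Given $w=\sigma'(t)$ with $d(w,[pq])>D$, let $\sigma'|_{[s_1,s_2]}$ be the maximal subpath through $t$ whose image stays outside $N_D([pq])$; its endpoints project to $u_1,u_2\in[pq]$ at distance exactly $D$. Each point of $[u_1u_2]$ is $D$-close to $\operatorname{im}\sigma'$ but not to the excursion (which is at distance $>D$ from $[pq]$), hence to $\sigma'$ before $s_1$ or after $s_2$; a connectedness argument on $[u_1u_2]$ produces a point $m$ and parameters $\tau_1\le s_1\le s_2\le\tau_2$ with $d(\sigma'(\tau_1),\sigma'(\tau_2))\le 2D$, so by tameness the excursion has length $\le\Lambda(2D+1)$ and $d(w,[pq])\le \Lambda(2D+1)+D$. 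Taking $R$ to be the maximum of the two bounds plus $\alpha+\beta$ (from taming) completes the proof; the remaining delicate points are the bookkeeping of the constants and the endpoint cases near $p$ and $q$.
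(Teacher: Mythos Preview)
The paper does not prove this theorem at all: it is stated as a citation from \cite[Chapter III.H, Theorem 1.7]{BH} and used as a black box (together with Remark~\ref{r:gs}) in the proofs of Propositions~\ref{Prop: qi-pole} and~\ref{Prop: quasi-pole}. There is therefore no ``paper's own proof'' to compare against.

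That said, your outline is essentially the standard proof given in the cited source, and it is correct. The four ingredients---taming to a continuous tame quasi-geodesic, the bisection estimate $d(x,\operatorname{im}c)\le \delta_1\log_2\ell(c)+C$, the self-bounding inequality for $D=\sup_{x\in[pq]}d(x,\operatorname{im}\sigma')$, and the maximal-excursion argument for the reverse inclusion---are exactly those of Bridson--Haefliger. One small point of care: in the reverse direction you are reusing the symbol $D$ (already bounded in the previous step) to define the excursion, and when you say ``not to the excursion (which is at distance $>D$ from $[pq]$)'' you should allow for the borderline case where a point of $[u_1u_2]$ is at distance exactly $D$ from an excursion endpoint; the usual fix is to run the argument with any $D'>D$ and take a limit, or to note that equality can occur only at $\sigma'(s_1)$ or $\sigma'(s_2)$, which already lie in the ``before $s_1$'' or ``after $s_2$'' portions. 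This is cosmetic and does not affect the conclusion.
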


Mario Bonk proved in \cite{Bo} that geodesic stability is, in fact, equivalent to hyperbolicity in geodesic spaces.

\begin{remark} \label{r:gs}
If $X$ is proper, then every quasigeodesic ray finishes at a point in $\p_\infty X$
and the conclusion of Theorem \ref{th: stability_geod} also holds if $q\in \partial_\infty X$ and $[pq]$ is a geodesic ray, by taking the limit.
\end{remark}

\begin{proposition}\label{Prop: qi-pole}
Suppose $X,Y$ are proper hyperbolic geodesic spaces and $f:X \to Y$ is a quasi-isometry.
If $X$ has a pole in $v$, then $Y$ has a pole in $f(v)$.
\end{proposition}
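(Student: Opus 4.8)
The plan is to exploit that a quasi-isometry carries geodesic rays to quasi-geodesic rays, and that in a proper hyperbolic space every quasi-geodesic ray stays uniformly close to a genuine geodesic ray with the same initial point (Remark \ref{r:gs}). Let $f$ be an $\varepsilon$-full $(\alpha,\beta)$-quasi-isometric embedding and let $M>0$ be the pole constant for $v$ in $X$, so that every point of $X$ lies within distance $M$ of some geodesic ray emanating from $v$. Let $R=R(\delta,\alpha,\beta)$ be the stability constant of Theorem \ref{th: stability_geod} for $Y$. I claim that $Y$ has a pole in $f(v)$ with the single constant $M':=\varepsilon+\alpha M+\beta+R$, which depends only on the fixed data and not on the chosen point of $Y$.

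First I would fix an arbitrary $y\in Y$. Since $f$ is $\varepsilon$-full, there is $x\in X$ with $d_Y(f(x),y)\le\varepsilon$. Because $v$ is a pole of $X$, there is a geodesic ray $\sigma\colon[0,\infty)\to X$ with $\sigma(0)=v$ and a parameter $t_0\ge 0$ such that $d_X(x,\sigma(t_0))<M$. Composing with $f$, the map $f\circ\sigma$ is an $(\alpha,\beta)$-quasi-geodesic ray in $Y$ (a geodesic is a $(1,0)$-quasi-geodesic, and the composition with $f$ is again a quasi-isometric embedding), and its initial point is $f(\sigma(0))=f(v)$.

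Next I would apply the ray version of geodesic stability. Since $Y$ is proper and hyperbolic, Remark \ref{r:gs} guarantees that $f\circ\sigma$ converges to a point $\xi\in\partial_\infty Y$ and that the geodesic ray $\gamma$ with $\gamma(0)=f(v)$ and endpoint $\xi$ satisfies $\mathcal{H}(\gamma,f\circ\sigma)\le R$. In particular the point $f(\sigma(t_0))$ of $f\circ\sigma$ lies within $R$ of $\gamma$. A single application of the triangle inequality then gives
\[
d_Y(y,\gamma)\le d_Y(y,f(x))+d_Y\big(f(x),f(\sigma(t_0))\big)+d_Y\big(f(\sigma(t_0)),\gamma\big)\le \varepsilon+\big(\alpha\, d_X(x,\sigma(t_0))+\beta\big)+R< M'.
\]
Since $\gamma$ is a geodesic ray emanating from $f(v)$ and $M'$ is independent of $y$, this shows that $f(v)$ is a pole of $Y$.

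The only delicate point is the use of Remark \ref{r:gs}: I must be sure that the geodesic ray supplied by stability can be taken to start exactly at $f(v)$, rather than merely running uniformly close to $f\circ\sigma$. This is where properness is essential — it provides both the limit point $\xi\in\partial_\infty Y$ (via Proposition \ref{Prop: equiv_boundary}) and a geodesic ray from the prescribed base point $f(v)$ to $\xi$, to which the segment version of Theorem \ref{th: stability_geod} applies in the limit. Everything else is a routine estimate with constants that do not depend on $y$.
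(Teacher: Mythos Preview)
Your proof is correct and follows essentially the same route as the paper: pull an arbitrary $y\in Y$ back to $x\in X$ via $\varepsilon$-fullness, use the pole at $v$ to find a geodesic ray $\sigma$ close to $x$, push $f\circ\sigma$ forward as an $(\alpha,\beta)$-quasi-geodesic ray from $f(v)$, and invoke Remark~\ref{r:gs} to replace it by a genuine geodesic ray from $f(v)$, obtaining the same constant $\alpha M+\beta+\varepsilon+R$. Your explicit discussion of why the ray supplied by stability can be taken to start at $f(v)$ is a nice clarification that the paper leaves implicit.
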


\begin{proof}
Consider constants $\a,\b,\e$ such that $f$ is an $\varepsilon$-full $(\alpha,\beta)$-quasi-isometric embedding.
Since $X$ has a pole $v$, there is a constant $M$ such that for every $x\in X$, there is a geodesic ray $\sigma$ with $\sigma(0)=v$ and $d(x,\sigma)\leq M$.
Also, for every $y\in Y$ there is some $x_0\in X$ such that $d(y,f(x_0))\leq \varepsilon$.
Let $\s_0$ be a geodesic ray with $\sigma_0(0)=v$ and $d(x,\sigma_0)\leq M$.
Therefore, $d(y,f\circ \sigma_0)\leq \alpha M+\beta+\varepsilon$.

Since $f\circ \sigma_0$ is an $(\alpha,\beta)$-quasi-geodesic ray with $(f\circ \sigma_0)(0)=v$, by Remark \ref{r:gs},
there is a geodesic ray $\sigma'$ with $\sigma'(0)=f(v)$ such that $f\circ \sigma_0$ lies in the $R$-neighborhood of $\sigma'$,
where $R$ is a constant which just depends on $\a$, $\b$ and the hyperbolicity constant of $Y$.
Therefore, $d(y,\sigma')\leq \alpha M+\beta+\varepsilon+R$ and $Y$ has a pole in $f(v)$.
\end{proof}

\begin{theorem} \cite[Chapter III.H, Theorem 3.9]{BH} \label{Th: homeom_boundary} Let $X,X'$ be proper hyperbolic geodesic spaces.
If $f:X\to X'$ is a quasi-isometry, then the canonically induced map in the boundary $\partial f:\partial_\infty X \to \partial_\infty X'$ is a homeomorphism.
\end{theorem}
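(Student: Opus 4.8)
The plan is to build the map $\partial f$ by geodesic stability, verify it is a bijection, and then show that both $\partial f$ and its inverse are (H\"older) continuous for the visual metrics, so that $\partial f$ is a homeomorphism. First, since $f$ is a quasi-isometry and $X$ is hyperbolic, Theorem \ref{th: stability_hyp} guarantees that $X'$ is hyperbolic as well, so $\partial_\infty X'$ is defined. Fix a base point $o\in X$, set $o'=f(o)$, and let $\a,\b,\e$ be constants for which $f$ is an $\e$-full $(\a,\b)$-quasi-isometric embedding. By Proposition \ref{Prop: equiv_boundary} every $\xi\in\partial_\infty X$ is represented by a geodesic ray $\gamma$ with $\gamma(0)=o$; then $f\circ\gamma$ is an $(\a,\b)$-quasi-geodesic ray, and by Remark \ref{r:gs} there is a geodesic ray $\sigma$ with $\sigma(0)=o'$ lying within Hausdorff distance $R=R(\d,\a,\b)$ of $f\circ\gamma$. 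I would define $\partial f(\xi):=[\sigma]$. Well-definedness is the first routine check: two geodesic rays representing the same $\xi$ stay within bounded Hausdorff distance, hence so do their $f$-images and the geodesics stabilizing them, so $[\sigma]$ does not depend on the chosen ray.

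The heart of the argument is a quasi-preservation estimate for Gromov products: I would prove that there is a constant $C=C(\d,\a,\b)$ with
\[
\frac{1}{\a}\,(\xi|\eta)_o - C \le (\partial f(\xi)\,|\,\partial f(\eta))_{o'} \le \a\,(\xi|\eta)_o + C
\]
for all $\xi,\eta\in\partial_\infty X$. To obtain this I would first treat interior points $x,y\in X$, using that $(x|y)_o$ equals, up to an additive constant depending only on $\d$, the distance from $o$ to a geodesic $[xy]$. Applying $f$ turns $[xy]$ into an $(\a,\b)$-quasi-geodesic, which by Theorem \ref{th: stability_geod} lies within $R$ of the geodesic $[f(x)f(y)]$; since $f$ distorts distances by the factor $\a$ and the additive constant $\b$, comparing the distance from $o'$ to $[f(x)f(y)]$ with the distance from $o$ to $[xy]$ yields the two-sided bound with the multiplicative factor $\a$. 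I would then pass to the boundary via Remark \ref{Remark: geodesic_product}, which controls the boundary Gromov products by those of approximating geodesic rays up to $2\d$, giving the displayed inequality.

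With this estimate in hand, bijectivity is formal. Injectivity follows because $\partial f(\xi)=\partial f(\eta)$ forces the middle Gromov product to be $+\infty$, hence $(\xi|\eta)_o=+\infty$, i.e.\ $\xi=\eta$. For surjectivity I would invoke the fact that a quasi-isometry $f$ admits a quasi-inverse $g\colon X'\to X$, itself a quasi-isometry, with $g\circ f$ and $f\circ g$ at bounded distance from the respective identity maps; the same construction applied to $g$ produces $\partial g$, and since maps at bounded distance induce the same map on the boundary, $\partial g\circ\partial f$ and $\partial f\circ\partial g$ are the identities. Thus $\partial f$ is a bijection with inverse $\partial g$.

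Finally, equipping $\partial_\infty X$ and $\partial_\infty X'$ with visual metrics $d$ and $d'$ (which exist by \cite{BS}), of the form $d(\xi,\eta)\asymp a^{-(\xi|\eta)_o}$ and $d'(\xi',\eta')\asymp (a')^{-(\xi'|\eta')_{o'}}$, the Gromov-product estimate translates into a bi-H\"older bound $d'(\partial f(\xi),\partial f(\eta))\le C'\,d(\xi,\eta)^{s}$ for a positive exponent $s$ depending only on $\a$ and the visual parameters, and symmetrically for $\partial g=(\partial f)^{-1}$. Hence both $\partial f$ and its inverse are continuous, so $\partial f$ is a homeomorphism. I expect the main obstacle to be the interior Gromov-product estimate of the second paragraph together with its passage to the boundary, since once that quasi-preservation is established the remaining steps are essentially formal.
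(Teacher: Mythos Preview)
The paper does not prove this theorem at all: it is quoted verbatim from Bridson--Haefliger \cite[Chapter III.H, Theorem 3.9]{BH} and used as a black box. Your proposal is a correct and essentially complete proof sketch, following the standard route one finds in that reference (and in \cite{BS}): construct $\partial f$ via geodesic stability, establish the two-sided estimate $\alpha^{-1}(\xi|\eta)_o - C \le (\partial f(\xi)|\partial f(\eta))_{o'} \le \alpha(\xi|\eta)_o + C$ by combining the approximation $(x|y)_o \approx d(o,[xy])$ with Theorem~\ref{th: stability_geod}, pass to the boundary through Remark~\ref{Remark: geodesic_product}, and read off bijectivity and bi-H\"older continuity. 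The only comment is that your expected ``main obstacle'' is genuinely routine once one knows the inequality $|\,(x|y)_o - d(o,[xy])\,| \le c(\delta)$, which is an elementary consequence of the thin-triangle condition; after that the rest is, as you say, formal.
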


\begin{proposition}\cite[Proposition 6.2]{GH} \label{Prop: compact} If $X$ is a proper hyperbolic geodesic space, then $\partial_\infty X$ with any visual metric is compact.
\end{proposition}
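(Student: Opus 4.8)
The plan is to prove that $\partial_\infty X$ is sequentially compact; since a visual metric makes $\partial_\infty X$ a metric space, this is equivalent to compactness. Fix a base point $o\in X$ and a visual metric $d$ with parameter $a>1$ and constants $c_1,c_2$, so that $d(\xi,\xi')\le c_2\,a^{-(\xi|\xi')_o}$ for all $\xi,\xi'\in\partial_\infty X$. Because $X$ is proper, hyperbolic and geodesic, Proposition \ref{Prop: equiv_boundary} allows me to represent every boundary point by a geodesic ray issuing from $o$: given a sequence $\{\xi_n\}\subset\partial_\infty X$, I choose geodesic rays $\gamma_n$ with $\gamma_n(0)=o$ and $[\gamma_n]=\xi_n$. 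Each $\gamma_n$ is $1$-Lipschitz and maps $[0,T]$ into the compact ball $\bar B(o,T)$, so by the Arzel\`a--Ascoli theorem together with a diagonal argument over $T\in\NN$ I extract a subsequence $\gamma_{n_k}$ converging uniformly on compact sets to a $1$-Lipschitz map $\gamma$ with $\gamma(0)=o$. Passing to the limit in $d(\gamma_{n_k}(s),\gamma_{n_k}(t))=|s-t|$ shows that $\gamma$ is again a geodesic ray, so $\xi:=[\gamma]\in\partial_\infty X$.

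It then remains to show $\xi_{n_k}\to\xi$, for which by the upper visual bound it suffices to prove $(\xi_{n_k}|\xi)_o\to\infty$. The key elementary fact is that the Gromov product along two geodesic rays from $o$ is almost non-decreasing: using $(\sigma(t)|\sigma(s))_o=s$ for $s\le t$ together with the $\delta$-inequality applied twice, one obtains
$$(\sigma(t)|\sigma'(t))_o \ge (\sigma(s)|\sigma'(s))_o - 2\delta \qquad (s\le t),$$
and hence $\liminf_{t\to\infty}(\sigma(t)|\sigma'(t))_o \ge (\sigma(s)|\sigma'(s))_o-2\delta$ for every fixed $s$. Applying this with $\sigma=\gamma_{n_k}$, $\sigma'=\gamma$ and $s=T$, and using $(\gamma_{n_k}(T)|\gamma(T))_o=T-\tfrac12 d(\gamma_{n_k}(T),\gamma(T))$ together with $d(\gamma_{n_k}(T),\gamma(T))\to0$, I get $\liminf_{t\to\infty}(\gamma_{n_k}(t)|\gamma(t))_o \ge T-\tfrac12-2\delta$ for $k$ large. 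Remark \ref{Remark: geodesic_product} then yields $(\xi_{n_k}|\xi)_o \ge \liminf_{t\to\infty}(\gamma_{n_k}(t)|\gamma(t))_o-2\delta \ge T-\tfrac12-4\delta$. As $T$ is arbitrary, $(\xi_{n_k}|\xi)_o\to\infty$, so $d(\xi_{n_k},\xi)\le c_2\,a^{-(\xi_{n_k}|\xi)_o}\to0$ and $\xi_{n_k}\to\xi$.

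The main obstacle is exactly this passage from finite to infinite scale: Arzel\`a--Ascoli only gives uniform convergence of the rays on compact intervals, that is, control of $(\gamma_{n_k}(t)|\gamma(t))_o$ for bounded $t$, whereas the visual distance $d(\xi_{n_k},\xi)$ is governed by the boundary Gromov product, which a priori reflects the behaviour of the rays as $t\to\infty$, where they may diverge. The almost-monotonicity inequality above is what bridges this gap, and its derivation is the only place where the hyperbolicity of $X$ is genuinely used beyond Proposition \ref{Prop: equiv_boundary} and Remark \ref{Remark: geodesic_product}; only the upper visual bound is needed, and completeness and boundedness of $\partial_\infty X$ are recovered as a byproduct.
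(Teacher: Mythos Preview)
Your argument is correct and is essentially the standard proof of this fact. Note, however, that the paper does not give its own proof of this proposition: it is stated with a citation to \cite[Proposition 6.2]{GH} and used as a black box. The argument in \cite{GH} follows the same Arzel\`a--Ascoli scheme you outline---extract a uniformly convergent subsequence of geodesic rays from the fixed base point and check that the limit ray represents the limit boundary point---so your proof is in line with the cited source. Your explicit isolation of the almost-monotonicity inequality $(\sigma(t)|\sigma'(t))_o \ge (\sigma(s)|\sigma'(s))_o - 2\delta$ and your use of Remark~\ref{Remark: geodesic_product} to pass from the $\liminf$ along the rays to the boundary Gromov product are both clean and correctly justified; the only cosmetic point is that once you fix $T$ and choose $k$ large enough that $d(\gamma_{n_k}(T),\gamma(T))<\varepsilon$, you could replace the somewhat arbitrary $T-\tfrac12$ by $T-\tfrac{\varepsilon}{2}$, but this is immaterial to the conclusion.
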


It is a basic fact of general topology that any homeomorphism $g:Z \to Z'$ between compact metric spaces is a uniform homeomorphism
(i.e., $g$ and $g^{-1}$ are uniformly continuous).
Therefore, the induced map $\partial f$ in Theorem \ref{Th: homeom_boundary} is a uniform homeomorphism.

Let us recall the following definition from \cite{BS}.  A map $f:X \to Y$ between metric spaces is
called \emph{quasi-symmetric} if it is not constant and if there
is a homeomorphism $\eta:[0,\infty) \to [0,\infty)$ such that from
$d_X(x,a)\leq t d_X(x,b)$ it follows that $d_Y(f(x),f(a))\leq
\eta(t)d_Y(f(x),f(b))$ for any $a,b,x\in X$ and all $t\geq 0$.
The function $\eta$ is called the \emph{control function} of $f$.

A quasi-symmetric map is said to be \emph{power quasi-symmetric}
or \emph{PQ-symmetric}, if its control function is of the form
\[\eta(t)= q \max\{t^p,t^{\frac{1}{p}}\}\] for some $p,q\geq 1$. For a characterization of this property, see \cite{M1}.

\begin{theorem}\cite[Theorem 5.2.15]{BS} \label{Th: pq-symmetric} Let $f : X \to Y$ be a quasi-isometric map of proper hyperbolic
geodesic spaces. Then, the naturally induced  map
$\partial f : \partial_\infty X \to \partial_\infty Y$  is
PQ-symmetric with respect to any visual metrics.
\end{theorem}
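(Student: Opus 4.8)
The plan is to show that $\partial f$ distorts the boundary Gromov product in an essentially affine manner, and then to read this off through the visual metrics. The first and most robust step is to prove that a quasi-isometric embedding distorts Gromov products affinely: if $f$ is an $(\alpha,\beta)$-quasi-isometric embedding between proper $\delta$-hyperbolic geodesic spaces, then there is a constant $C=C(\alpha,\beta,\delta)$ with
\[
\tfrac{1}{\alpha}(x|y)_o - C \;\le\; \big(f(x)\,\big|\,f(y)\big)_{f(o)} \;\le\; \alpha\,(x|y)_o + C
\]
for all $x,y,o$. Since $(x|y)_o$ agrees up to a bounded error with $d(o,[xy])$, and $f$ carries the geodesic $[xy]$ to an $(\alpha,\beta)$-quasi-geodesic that lies within Hausdorff distance $R=R(\delta,\alpha,\beta)$ of a geodesic $[f(x)f(y)]$ by geodesic stability (Theorem \ref{th: stability_geod}), the distance $d\big(f(o),[f(x)f(y)]\big)$ is pinched affinely between the two displayed bounds.

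Next I would push this to the boundary. By Remark \ref{r:gs} quasi-geodesic rays converge to boundary points, so the homeomorphism $\partial f$ of Theorem \ref{Th: homeom_boundary} is available; and by Remark \ref{Remark: geodesic_product} the boundary Gromov product equals, up to $2\delta$, the limit of the interior products along geodesic rays. Passing to the limit yields a constant $C'$ with
\[
\tfrac{1}{\alpha}(\xi|\xi')_o - C' \;\le\; \big(\partial f(\xi)\,\big|\,\partial f(\xi')\big)_{f(o)} \;\le\; \alpha\,(\xi|\xi')_o + C'
\]
for all $\xi,\xi'\in\partial_\infty X$. Fixing visual metrics $d_X$ (base $o$, parameter $a$) and $d_Y$ (base $f(o)$, parameter $a'$) and taking logarithms, each metric becomes a Gromov product up to a bounded additive error; inserting the estimate above expresses $-\log d_Y(\partial f\xi,\partial f\xi')$ as an affine function, with multiplicative coefficient between $\alpha^{-1}$ and $\alpha$, of $-\log d_X(\xi,\xi')$.

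The delicate point — and the reason one cannot simply conclude quasi-symmetry from the pointwise estimate — is that the factor $\alpha$ does not respect \emph{differences} of Gromov products: writing the quasi-symmetry condition in terms of $(x|a)_o$ and $(x|b)_o$ produces a contribution proportional to $(\alpha-\alpha^{-1})(x|a)_o$, which diverges as $a\to x$. The remedy I would adopt is to work with the boundary cross-ratio rather than with distance ratios directly. Combining the affine distortion of Gromov products with the four-point structure of $\delta$-hyperbolic spaces shows that $\partial f$ distorts cross-ratios by a controlled homeomorphism of $[0,\infty)$, i.e.\ that $\partial f$ is power quasi-M\"obius; the cross-ratio is the right invariant precisely because the divergent $(\alpha-\alpha^{-1})$ growth seen above does not arise for it.

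Finally, I would convert the power quasi-M\"obius property into PQ-symmetry, and here the geometry of the target enters: by Proposition \ref{Prop: compact} the boundaries are compact, hence bounded and complete, for any visual metric, so the general principle that a quasi-M\"obius homeomorphism between bounded complete spaces is quasi-symmetric applies, and the \emph{power} form of the M\"obius distortion upgrades the control function to $\eta(t)=q\max\{t^{p},t^{1/p}\}$ with $p,q\ge 1$ depending only on $\alpha,\beta,\delta$ and the visual parameters. I expect this last conversion to be the main obstacle, since it requires careful bookkeeping of the regimes where the relevant Gromov products are small (boundary points far apart, where the exponential visual estimates are weakest) together with a verification that the exponent $p$ is controlled independently of the base point and of the particular points $\xi,\xi'$ chosen.
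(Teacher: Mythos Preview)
The paper does not prove this theorem: it is quoted verbatim from \cite[Theorem~5.2.15]{BS} and used as a black box in the proof of Theorem~\ref{th: isoperimetric_manifolds}. There is therefore no ``paper's own proof'' to compare your attempt against.

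That said, your outline is a faithful sketch of the standard argument one finds in Buyalo--Schroeder. The three stages you isolate --- affine distortion of Gromov products via geodesic stability, passage to the boundary along rays, and then the detour through cross-ratios and the quasi-M\"obius property to reach PQ-symmetry --- are exactly the architecture of their proof. You also correctly identify the genuine obstruction: the naive attempt to read quasi-symmetry directly from the two-sided affine estimate on $(\xi|\xi')_o$ fails because the multiplicative discrepancy $\alpha$ versus $\alpha^{-1}$ produces an uncontrolled term, and the cross-ratio is precisely the quantity for which this discrepancy cancels. The final conversion from power quasi-M\"obius to PQ-symmetric does use boundedness of the boundary (Proposition~\ref{Prop: compact}), as you note. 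If you want to fill in the details rather than cite the result, the place to look is \cite[Chapter~5]{BS}, where the cross-ratio machinery is developed in full.
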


\begin{proposition}\label{Prop: perfect} If $g:Z \to Z'$ is a PQ-symmetric uniform homeomorphism between the metric spaces $Z,Z',$ and $Z$ is uniformly perfect, then $Z'$ is uniformly perfect.
\end{proposition}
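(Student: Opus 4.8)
The plan is to prove the contrapositive: assuming $Z'$ is not uniformly perfect, I will exhibit, at arbitrarily small scales, an empty spherical annulus in $Z$ of arbitrarily large multiplicative width, contradicting the uniform perfectness of $Z$. First I fix the data once and for all: write $\eta(t)=q\max\{t^{p},t^{1/p}\}$ for the control function of the PQ-symmetric map $g$; let $S_0>1$ and $\varepsilon_0>0$ be constants witnessing that $Z$ is uniformly perfect (so that for every $x\in Z$ and every $0<\varepsilon\le\varepsilon_0$ there is $y\in Z$ with $\varepsilon/S_0<d(x,y)\le\varepsilon$); and let $\omega$ be a nondecreasing modulus of uniform continuity for $g^{-1}$, which exists because $g$ is a uniform homeomorphism. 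Finally I choose $S>1$ so large that $(S/q)^{1/p}>S_0$, for instance any $S>qS_0^{\,p}$. Since $Z$ is uniformly perfect it is nontrivial, hence so is $Z'$; I fix two distinct points $p',q'\in Z'$ and set $D=d'(p',q')>0$, so that every $x'\in Z'$ has a point at distance at least $D/2$ from it.

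Negating uniform perfectness of $Z'$ for the chosen $S$ and an arbitrarily small threshold, I obtain a point $x'\in Z'$ and a scale $\varepsilon'>0$, which I may take as small as I wish — in particular with $\varepsilon'<D/2$ and $S_0\,\omega(\varepsilon')\le\varepsilon_0$ — such that the annulus $\{w'\in Z':\varepsilon'/S<d'(x',w')\le\varepsilon'\}$ is empty. Put $x=g^{-1}(x')$. Emptiness of this annulus means that $Z=U\sqcup V$, where $U=\{w:d'(x',g(w))\le\varepsilon'/S\}$ and $V=\{w:d'(x',g(w))>\varepsilon'\}$; here $x\in U$, and $V\neq\emptyset$ because $\varepsilon'<D/2$.

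The heart of the argument is to transport the gap of ratio $S$ in $Z'$ into a gap of ratio $S_0$ in $Z$, using only the forward quasi-symmetry of $g$. For $u\in U$ and $v\in V$ I claim $d(x,u)<c\,d(x,v)$ with $c=(q/S)^{1/p}$: otherwise $d(x,v)\le c^{-1}d(x,u)$, and since $c^{-1}=(S/q)^{1/p}\ge 1$ we have $\eta(c^{-1})=q\,c^{-p}=S$, so the quasi-symmetry inequality based at $x$ gives $\varepsilon'<d'(x',g(v))\le\eta(c^{-1})\,d'(x',g(u))\le S\cdot(\varepsilon'/S)=\varepsilon'$, which is absurd. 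Setting $\rho_1=\sup_{u\in U}d(x,u)$ and $\rho_2=\inf_{v\in V}d(x,v)$, the claim yields $\rho_1\le c\,\rho_2$, hence $\rho_2>S_0\,\rho_1$, while by construction no point of $Z$ lies in the open annulus $\{w:\rho_1<d(x,w)<\rho_2\}$. Moreover every $u\in U$ satisfies $d'(x',g(u))\le\varepsilon'$, so uniform continuity of $g^{-1}$ gives $d(x,u)\le\omega(\varepsilon')$ and therefore $\rho_1\le\omega(\varepsilon')$.

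It then remains to contradict the uniform perfectness of $Z$. When $\rho_1>0$ I apply it at the base point $x$ and the scale $r=S_0\,\rho_1$, which is admissible because $r\le S_0\,\omega(\varepsilon')\le\varepsilon_0$; this produces $y\in Z$ with $\rho_1=r/S_0<d(x,y)\le r=S_0\rho_1<\rho_2$, i.e. a point inside the forbidden annulus, a contradiction. If $\rho_1=0$ then $U=\{x\}$ and, using uniform continuity of $g$ to see that $\rho_2>0$, any scale $0<r\le\min\{\varepsilon_0,\rho_2\}$ already puts the point supplied by uniform perfectness of $Z$ into $(0,\rho_2)$, again a contradiction. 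Hence $Z'$ must be uniformly perfect. I expect the main obstacle — and the reason both hypotheses are genuinely used — to be exactly the separation of scale from ratio: quasi-symmetry controls only ratios of distances, so it faithfully transports the width of the gap but carries no information about its absolute size, whereas it is the uniform-homeomorphism hypothesis (through $\omega$) that forces the transported gap below the threshold $\varepsilon_0$ at which uniform perfectness of $Z$ can be invoked. This is also why a direct, non-contrapositive attempt is awkward: uniform continuity alone does not preserve ratios and so cannot pin the image distance $d'(x',g(y))$ to the prescribed scale $\varepsilon'$.
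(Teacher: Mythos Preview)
Your proof is correct. Both hypotheses are used exactly as you isolate them: PQ-symmetry of $g$ transports the multiplicative width of the gap, and uniform continuity of $g^{-1}$ brings the transported gap below the threshold $\varepsilon_0$. The only cosmetic point is in the case $\rho_1=0$: to land strictly inside $(0,\rho_2)$ you should take $r<\rho_2$ rather than $r\le\rho_2$, but this is immediate.

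Your route differs from the paper's. The paper argues essentially directly: given $x'\in Z'$ and $0<\varepsilon'\le\varepsilon_0'$, it uses uniform perfectness of $Z$ to build a sequence $y_k\to x=g^{-1}(x')$ with $d(x,y_k)\in(\varepsilon_0/S^{k},\varepsilon_0/S^{k-1}]$, pushes it forward to $y_k'=g(y_k)\to x'$, and applies PQ-symmetry of $g$ (from $d(x,y_k)<S^2 d(x,y_{k+1})$) to get $d'(x',y_{k+1}')\ge (qS^{2p})^{-1}d'(x',y_k')$; since the sequence starts outside $\bar B(x',\varepsilon')$ (this is where uniform continuity of $g^{-1}$ enters, via the choice of $\varepsilon_0'$) and tends to $x'$, it cannot jump over an annulus of ratio $qS^{2p}$. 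You instead pull back a single hypothetical gap and bound its preimage ratio in one stroke. The ingredients and their roles are identical; what you gain is a cleaner argument with no auxiliary sequence and, implicitly, a slightly better constant (roughly $qS_0^{\,p}$ versus the paper's $qS^{2p}$). Your closing remark that a direct attempt is ``awkward'' is a bit overstated: the paper's sequence trick shows a direct proof is perfectly feasible, it just cannot work with a single point.
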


\begin{proof} Suppose $Z$ is $S$-uniformly perfect (with constant $\varepsilon_0$).
By uniform continuity of $f^{-1}$, let $\varepsilon'_0$ be such that
if $d'(f(x_1),f(x_2))\leq \varepsilon'_0$, then $d(x_1,x_2)<\frac{\varepsilon_0}{S}$.
Let $x'$ be any point in $Z'$ and let $0<\varepsilon'\leq \varepsilon'_0$.
Let us see that there is some point $y' \in Z'$ such that $\frac{\varepsilon'}{qS^{2p}} < d'(x',y')\leq \varepsilon'$.

Let $x=f^{-1}(x')$. Thus, there is a point $y_1\in Z$ such that $\frac{\varepsilon_0}{S}<d(x,y_1)\leq \varepsilon_0$.
Since $\frac{\varepsilon_0}{S}<d(x,y_1)$, if $y'_1=f(y_1)$ it follows that $\varepsilon'_0 < d'(x',y'_1)$.

Since $Z$ is uniformly perfect, there is a point $y_2$ such that $\frac{\varepsilon_0}{S^2} < d(x,y_2)\leq \frac{\varepsilon_0}{S}$. In fact, there is a sequence of points $(y_k)$ such that $\frac{\varepsilon_0}{S^k} < d(x,y_k)\leq \frac{\varepsilon_0}{S^{k-1}}$. Clearly, $(y_k)\to x$ and if $y'_k=f(y_k)$, then $(y'_k)\to x'$.

Since $f$ is PQ-symmetric, there are constants $p,q \ge 1$ such that $d(x,a)\leq t d(x,b)$ implies that
$$
d'(f(x),f(a))\leq q \max\{t^p,t^{\frac{1}{p}}\} d'(f(x),f(b))
$$
for any $a,b,x\in X$ and all $t\geq 0$.
Since $d(x,y_k)< S^2 d(x,y_{k+1})$ for every $k\in \mathbb{N}$,
we have $d'(x',y'_k)\le q (S^2)^{p} d'(x',y'_{k+1})$ and $d'(x',y'_{k+1}) \ge \frac{1}{qS^{2p}}d'(x',y'_k)$.

Seeking for a contradiction assume that $\bar{B}(x',\varepsilon')\backslash \bar{B}(x',\frac{\varepsilon'}{qS^{2p}})=\emptyset$.
Then, since $d'(x',y'_1)>\varepsilon'_0\geq \varepsilon'$ and $(y'_k)\to x'$, there is some $n\in \NN$ such that $\varepsilon' < d'(x',y'_n)$ and $d'(x',y'_{n+1}) \le \frac{\varepsilon'}{qS^{2p}}$.
Therefore, $d'(x',y'_{n+1})<\frac{1}{qS^{2p}}d'(x',y'_n)$ leading to contradiction.
Hence, there exists $y' \in \bar{B}(x',\varepsilon')\backslash \bar{B}(x',\frac{\varepsilon'}{qS^{2p}})$ and $Z'$ is uniformly perfect.
\end{proof}

\begin{lemma}\cite[Lemma 2.3]{K} \label{lema: uniform-net} Every $\varepsilon$-net in a complete Riemannian manifold with bounded local geometry is uniform.
\end{lemma}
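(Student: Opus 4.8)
The plan is to bound, uniformly in the vertex, the number of neighbours of each point of the $\varepsilon$-net $\Gamma=\Gamma_{A_\varepsilon}$ by a packing/volume-counting argument, the constant depending only on $\varepsilon$ and on the data $n,c,r$ of the bounded local geometry. Fix a vertex $v\in A_\varepsilon$. By the definition of the $\varepsilon$-net, the neighbours of $v$ are exactly the points $w\in A_\varepsilon\setminus\{v\}$ with $d(v,w)\le 2\varepsilon$, and since $A_\varepsilon$ is an $\varepsilon$-approximation (a maximal $\varepsilon$-separated set), any two of them lie at distance at least $\varepsilon$. Thus it suffices to bound, independently of $v$, the cardinality of an $\varepsilon$-separated subset of $\bar B(v,2\varepsilon)$.

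First I would place around each neighbour $w$ the ball $B(w,\varepsilon/2)$. Since the neighbours are $\varepsilon$-separated, the triangle inequality makes these balls pairwise disjoint, and all of them lie in $B(v,5\varepsilon/2)$. Writing $\mu(v)$ for the number of neighbours of $v$,
\[
\mu(v)\cdot\inf_{w}\Vol_n\!\big(B(w,\varepsilon/2)\big)\le \sum_{w}\Vol_n\!\big(B(w,\varepsilon/2)\big)\le \Vol_n\!\big(B(v,5\varepsilon/2)\big).
\]
So the lemma reduces to two uniform volume estimates: a positive lower bound $V_-$ for the volume of balls of radius $\varepsilon/2$ and a finite upper bound $V_+$ for the volume of balls of radius $5\varepsilon/2$, both independent of the centre. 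Granting these, every vertex has at most $V_+/V_-$ neighbours, so $\Gamma$ is $\mu$-uniform with $\mu=\lfloor V_+/V_-\rfloor$, which is the conclusion.

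These volume bounds come from bounded local geometry, and this is where the only real work lies. When a radius $\rho$ is below the chart radius $r$, a single bilipschitz diffeomorphism $F\colon B(x,r)\to\RR^n$ compares $B(x,\rho)$ with a Euclidean ball up to the factor $c$, giving at once $c^{-2n}\omega_n\rho^{\,n}\le \Vol_n(B(x,\rho))\le c^{2n}\omega_n\rho^{\,n}$; this already yields $V_-$ (replacing, if $\varepsilon/2\ge r$, the ball $B(w,\varepsilon/2)$ by the smaller $B(w,r/2)$ it contains). The main obstacle is the upper bound $V_+$ when $5\varepsilon/2$ exceeds $r$, since then no single chart covers $B(v,5\varepsilon/2)$. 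I would handle this by chaining charts: choose a maximal $(r/2)$-separated set $\{z_i\}$ in $B(v,5\varepsilon/2)$, so the balls $B(z_i,r/2)$ cover it. The chart centred at a fixed $z_i$ contains every $z_j$ with $d(z_i,z_j)\le r$ and turns the separation into a Euclidean packing estimate, bounding the number of such $z_j$ by $D=(4c^2+1)^n$; hence the proximity graph $H$ on $\{z_i\}$, joining $z_i,z_j$ when $d(z_i,z_j)\le r$, has degree at most $D$. Because $M$ is a complete, hence geodesic, space, $B(v,5\varepsilon/2)$ is connected, and any $z_i$ is linked to a fixed central vertex by a chain of successively overlapping covering balls of length $L=O(\varepsilon/r)$; overlapping balls have centres at distance at most $r$ and so give an $H$-edge, whence $d_H(z_0,z_i)\le L$. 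A connected graph of degree $\le D$ and radius $\le L$ has at most $D^{L+1}$ vertices, so $\#\{z_i\}$ is bounded uniformly in $v$, and finally $\Vol_n(B(v,5\varepsilon/2))\le \sum_i\Vol_n(B(z_i,r/2))\le \#\{z_i\}\,c^{2n}\omega_n(r/2)^n=:V_+$. This completes the estimate and hence the proof; the crux throughout is replacing the one available chart, valid only up to radius $r$, by a controlled chain of charts across the manifold.
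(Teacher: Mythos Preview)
The paper does not give its own proof of this lemma: it simply cites \cite[Lemma~2.3]{K}, and the remark following Proposition~\ref{Prop: Kanai_net} indicates that Kanai's arguments rest only on uniform upper and lower bounds for the volumes of small balls, which the paper asserts follow from bounded local geometry. Your proof is therefore not being compared against anything in the paper; it supplies what the paper takes for granted.

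Your packing/volume argument is the standard and correct one. The lower volume bound $V_-$ is immediate from a single chart, exactly as you say. The only delicate point is the upper bound $V_+$ when $5\varepsilon/2>r$, and your chaining idea is the right one. There is, however, a small mismatch of constants in the execution: with a maximal $(r/2)$-separated set $\{z_i\}$ and steps of length $r/2$ along the geodesic, consecutive chain vertices satisfy only $d(z_{j_k},z_{j_{k+1}})<r/2+r/2+r/2=3r/2$, not $\le r$, so they need not be $H$-adjacent under your definition. This is not a real obstacle --- either refine to an $(r/4)$-separated set (then the degree bound becomes $(8c^2+1)^n$ and the chain step gives distance $<r$), or keep your set and observe that the degree bound iterates: the number of $z_j$ with $d(z_i,z_j)\le 2r$ is at most $D^2$, which suffices. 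With that adjustment the argument goes through and yields exactly the uniform bound you claim.
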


\begin{theorem}\label{th: isoperimetric_manifolds}
Let $X$ be a non-compact complete Riemannian manifold with bounded local geometry.
Assume that $X$ is hyperbolic and has a pole.
Then, $h(X)>0$ if and only if $\partial_\infty X$ is uniformly perfect.
\end{theorem}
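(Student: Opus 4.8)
The plan is to transfer the problem to a graph and then invoke Theorem \ref{t:iigraph}. Concretely, I would fix an $\varepsilon$-net $\Gamma$ inside $X$. By Lemma \ref{lema: uniform-net} the graph $\Gamma$ is uniform, and by Proposition \ref{Prop: net_qi} the associated nearest-point map $f\colon X\to\Gamma$ is a quasi-isometry. Both $X$ and $\Gamma$ are proper geodesic spaces, so Theorem \ref{th: stability_hyp} guarantees that $\Gamma$ is hyperbolic (because $X$ is), and Proposition \ref{Prop: qi-pole} guarantees that $\Gamma$ inherits a pole from $X$. Thus $\Gamma$ is precisely the kind of object covered by Theorem \ref{t:iigraph}. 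Finally, Proposition \ref{Prop: Kanai_net} reduces the positivity of $h(X)$ to that of $h(\Gamma)$, so that $h(X)>0$ if and only if $h(\Gamma)>0$.

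With this reduction in hand, Theorem \ref{t:iigraph} tells us that $h(\Gamma)>0$ if and only if $\partial_\infty\Gamma$ is uniformly perfect for some visual metric. Hence it only remains to show that $\partial_\infty X$ is uniformly perfect if and only if $\partial_\infty\Gamma$ is. For this I would study the boundary map $\partial f\colon\partial_\infty X\to\partial_\infty\Gamma$ induced by the quasi-isometry $f$. By Theorem \ref{Th: homeom_boundary} it is a homeomorphism, and by Theorem \ref{Th: pq-symmetric} it is PQ-symmetric with respect to any choice of visual metrics. Since both boundaries are compact by Proposition \ref{Prop: compact}, the homeomorphism $\partial f$ and its inverse are automatically uniformly continuous, so $\partial f$ is a PQ-symmetric uniform homeomorphism. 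Proposition \ref{Prop: perfect} then carries uniform perfectness from $\partial_\infty X$ to $\partial_\infty\Gamma$; applying the same proposition to $(\partial f)^{-1}$, which is again a PQ-symmetric uniform homeomorphism, yields the reverse implication. Chaining the equivalences gives $h(X)>0$ if and only if $\partial_\infty X$ is uniformly perfect.

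The steps that require the most care are the bookkeeping around the visual metric and the bidirectionality of the perfectness transfer. One should check that uniform perfectness of $\partial_\infty X$ does not depend on the chosen visual metric; this follows because the identity map between two visual metrics on the same boundary is itself PQ-symmetric (apply Theorem \ref{Th: pq-symmetric} to the identity quasi-isometry), so Proposition \ref{Prop: perfect} makes the property metric-independent, matching the ``for some visual metric'' wording of Theorem \ref{t:iigraph}. The other delicate point is that Proposition \ref{Prop: perfect} is stated as a one-directional implication, so to obtain an equivalence I must verify that the inverse of a PQ-symmetric map is again PQ-symmetric and that compactness upgrades $(\partial f)^{-1}$ to a uniform homeomorphism; both are standard, the first being a routine manipulation of the control function $\eta(t)=q\max\{t^p,t^{1/p}\}$. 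No genuinely new estimate is needed beyond those already packaged in the quoted results.
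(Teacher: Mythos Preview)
Your proposal is correct and follows essentially the same route as the paper's own proof: pass to an $\varepsilon$-net $\Gamma$, use Kanai's results (Propositions \ref{Prop: Kanai_net}, \ref{Prop: net_qi}, Lemma \ref{lema: uniform-net}) together with Theorem \ref{th: stability_hyp} and Proposition \ref{Prop: qi-pole} to put $\Gamma$ in the scope of Theorem \ref{t:iigraph}, and then transfer uniform perfectness across the boundary homeomorphism via Theorems \ref{Th: homeom_boundary}, \ref{Th: pq-symmetric}, Proposition \ref{Prop: compact} and Proposition \ref{Prop: perfect}. Your extra care about visual-metric independence and the bidirectionality of Proposition \ref{Prop: perfect} is well placed but does not depart from the paper's argument.
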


\begin{proof} Let $\Gamma$ be an $\varepsilon$-net in $X$. Then, by Proposition \ref{Prop: Kanai_net}, $h(X)>0$ if and only if $h(\Gamma)>0$.

Note that $X$ is a proper geodesic space since it is a complete Riemannian manifold.

By Proposition \ref{Prop: net_qi}, $X$ and $\Gamma$ are quasi-isometric.
Thus $\Gamma$ is hyperbolic by Theorem \ref{th: stability_hyp}.
Therefore, by Theorem \ref{Th: homeom_boundary} and
Proposition \ref{Prop: compact}, there is a unifom homeomorphism between $\partial_\infty X$ and $\partial_\infty \Gamma$ and
by Theorem \ref{Th: pq-symmetric}, this homeomorphism  and its inverse are PQ-symmetric.

By Proposition \ref{Prop: perfect}, $\partial_\infty \Gamma$ is uniformly perfect if and only if $\partial_\infty X$ is uniformly perfect.
By Proposition \ref{Prop: qi-pole}, $\Gamma$ has a pole and by Lemma \ref{lema: uniform-net}, $\Gamma$ is uniform.
Thus, by Theorem \ref{t:iigraph}, $h(\Gamma)>0$ if and only if $\partial_\infty X$ is uniformly perfect.
\end{proof}

Let us recall the following definition from \cite{Cao}.

\begin{definition} A complete manifold
(or graph) $X$ is said to have a quasi-pole in a compact subset
$\Omega \subset X$ if there
exists $C > 0$ such that each point of $X$ lies in a $C$-neighborhood of some geodesic
ray emanating from $\Omega$.
\end{definition}

The main result in \cite{Cao} is the following:

\begin{theorem}\label{Theorem: Cao} Let $X$ be a non-compact complete manifold (or a graph) which
admits a quasi-pole and has bounded local geometry. Suppose that $X$ is Gromov-hyperbolic
and the diameters of the connected components of $\partial^g X$ have a positive
lower bound (with respect to a fixed Gromov metric). Then $h(X)>0$.
\end{theorem}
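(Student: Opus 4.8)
The plan is to derive Cao's theorem from Theorems \ref{th: isoperimetric_manifolds} and \ref{t:iigraph} by checking that the hypotheses listed here imply the hypotheses of those characterizations. Since $X$ is either a complete Riemannian manifold or a uniform graph, it is a proper geodesic space; hence by Proposition \ref{Prop: equiv_boundary} the geodesic boundary $\partial^g X$ and the sequential boundary $\partial_\infty X$ are canonically identified. We therefore work throughout with $\partial_\infty X$ equipped with the fixed visual (Gromov) metric $d$, noting that in the graph case ``bounded local geometry'' is precisely the statement that $X$ is uniform.

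First I would upgrade the quasi-pole to a genuine pole based at a single point. Fix a point $v$ in the compact set $\O$. Given any $x\in X$, the quasi-pole hypothesis furnishes a geodesic ray $\g$ emanating from some $p\in\O$ with $d(x,\g)\le C$. Let $\xi\in\partial_\infty X$ be the endpoint of $\g$, and let $\g'$ be a geodesic ray from $v$ to $\xi$, which exists because $X$ is proper. Since $\g$ and $\g'$ are geodesic rays with the same endpoint at infinity issuing from $p$ and $v$ with $d(v,p)\le\diam(\O\cup\{v\})<\infty$, in the $\d$-hyperbolic space $X$ they lie at Hausdorff distance at most some constant $M'$ depending only on $\d$ and $\diam\O$. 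Hence $d(x,\g')\le C+M'$ for every $x$, so $v$ is a pole of $X$.

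Second, I would show that the lower bound on the diameters of the components of $\partial_\infty X$ forces uniform perfectness. Let $\eta>0$ be such that every connected component has diameter at least $\eta$. Fix $\xi\in\partial_\infty X$ and let $C$ be its component. The map $\zeta\mapsto d(\xi,\zeta)$ is continuous on the connected set $C$, so its image is an interval $[0,r_\xi]$; the triangle inequality gives $\diam(C)\le 2r_\xi$, whence $r_\xi\ge\eta/2$. Consequently, for every $0<\e\le\eta/2$ there is $\zeta\in C$ with $d(\xi,\zeta)=\e$, and this verifies the definition of uniform perfectness with $S=2$ and $\e_0=\eta/2$. This holds for the given visual metric, which is exactly the form required by Theorems \ref{th: isoperimetric_manifolds} and \ref{t:iigraph} (and, in any case, uniform perfectness is stable under the power-quasisymmetric changes relating distinct visual metrics, cf. Proposition \ref{Prop: perfect}).

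With these two reductions in hand the conclusion is immediate: in the manifold case, $X$ is a non-compact complete manifold with bounded local geometry that is hyperbolic, has a pole, and whose boundary is uniformly perfect, so Theorem \ref{th: isoperimetric_manifolds} yields $h(X)>0$; in the graph case the same data feed into Theorem \ref{t:iigraph} to give $h(X)>0$. I expect the only genuinely delicate step to be the passage from quasi-pole to pole, which rests on the standard fact that two geodesic rays with a common endpoint at infinity in a proper $\d$-hyperbolic space stay at a finite Hausdorff distance controlled by their initial separation and by $\d$; the compactness of $\O$ is what makes this bound uniform over all $x\in X$. The remaining steps are essentially topological and use the accumulated machinery of the previous sections.
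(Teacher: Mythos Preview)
Your proposal is correct and follows essentially the same route as the paper: the paper packages the two reductions as Proposition~\ref{Prop: quasi-pole} (quasi-pole $\Leftrightarrow$ pole) and Proposition~\ref{Prop: pseudo-regular} (lower bound on component diameters $\Rightarrow$ uniformly perfect), and then invokes Theorems~\ref{t:iigraph} and~\ref{th: isoperimetric_manifolds}, exactly as you do. The only cosmetic difference is in the quasi-pole step: the paper concatenates $[vw]\cup\gamma_w$ into a $(1,2D)$-quasigeodesic and applies geodesic stability (Remark~\ref{r:gs}), whereas you cite directly the equivalent standard fact that two geodesic rays with the same endpoint at infinity lie at Hausdorff distance controlled by $\delta$ and the distance between their basepoints; and in the uniform-perfectness step your intermediate-value argument for $\zeta\mapsto d(\xi,\zeta)$ is the same as the paper's, the equality $d(\xi,\zeta)=\varepsilon$ at the top endpoint being secured by compactness of $\partial_\infty X$ (Proposition~\ref{Prop: compact}).
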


Let us see that this is a particular case of Theorems \ref{t:iigraph} and \ref{th: isoperimetric_manifolds}.

\begin{proposition}\label{Prop: quasi-pole} A proper hyperbolic geodesic space $X$ has a quasi-pole if and only if it has a pole.
\end{proposition}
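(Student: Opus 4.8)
The plan is to prove the two implications separately; the content lies entirely in the direction \emph{quasi-pole $\Rightarrow$ pole}, the converse being a triviality. Indeed, if $X$ has a pole at $v$, then $\Omega=\{v\}$ is compact and the defining property of a pole says \emph{verbatim} that every point of $X$ lies in an $M$-neighborhood of some geodesic ray emanating from $\Omega$; hence $X$ has a quasi-pole (with $C=M$). So I would dispose of this direction in one sentence and concentrate on the forward one.

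For the forward direction, suppose $X$ has a quasi-pole in a compact set $\Omega$ with constant $C$, fix any base point $v\in\Omega$, and set $D=\diam\Omega<\infty$. Given an arbitrary $x\in X$, the quasi-pole hypothesis furnishes a geodesic ray $\gamma$ with $\gamma(0)=p\in\Omega$ and a parameter $t_0$ such that $d(x,\gamma(t_0))\le C$. The key idea is to replace $\gamma$ by a genuine geodesic ray issuing from $v$ while retaining control of the distance to $x$. To this end I would concatenate a geodesic segment $[vp]$ (of length $\le D$) with $\gamma$, obtaining an arc-length path $c:[0,\infty)\to X$ with $c(0)=v$. A short estimate shows $c$ is a $(1,2D)$-quasi-geodesic ray: it is $1$-Lipschitz by construction, and for parameters $s,s'$ on opposite sides of the concatenation point the triangle inequality together with the fact that $[vp]$ and $\gamma$ are geodesics yields $d(c(s),c(s'))\ge|s-s'|-2D$.

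Since $X$ is proper and hyperbolic, I would then invoke geodesic stability. By Remark \ref{r:gs} the quasi-geodesic ray $c$ terminates at a boundary point $\xi\in\partial_\infty X$, and Theorem \ref{th: stability_geod} (in the boundary form of Remark \ref{r:gs}) produces a geodesic ray $\sigma$ with $\sigma(0)=v$ and $\mathcal{H}(c,\sigma)\le R$, where $R=R(\delta,1,2D)$ depends only on the hyperbolicity constant $\delta$ of $X$ and on $D$. Because $\gamma$ is the tail of $c$, its whole image lies in the $R$-neighborhood of $\sigma$; in particular $d(\gamma(t_0),\sigma)\le R$, so $d(x,\sigma)\le C+R$. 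As $x$ was arbitrary, this exhibits $v$ as a pole with constant $M=C+R$.

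The only delicate point, and the step I expect to require care, is \emph{uniformity}: one must ensure that the constant used to pass from $\gamma$ to $\sigma$ is independent of $x$. This is guaranteed precisely because the initial point $p$ of $\gamma$ always lies in $\Omega$, so the auxiliary segment $[vp]$ has length at most $D=\diam\Omega$ regardless of $x$; consequently the quasi-geodesic constants $(1,2D)$, and hence $R=R(\delta,1,2D)$, are uniform over all $x\in X$. I anticipate that this bookkeeping, rather than any single inequality, is where the argument must be handled attentively.
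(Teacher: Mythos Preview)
Your proof is correct and follows essentially the same approach as the paper: both concatenate a segment $[vp]$ of length at most $D=\diam\Omega$ with the given ray to obtain a $(1,2D)$-quasigeodesic issuing from $v$, and then invoke geodesic stability in the form of Remark~\ref{r:gs} to replace it by a genuine geodesic ray from $v$ at uniformly bounded Hausdorff distance. The only difference is organizational---the paper first isolates the auxiliary fact that any two rays from points of $\Omega$ sharing a boundary endpoint lie within $R+D$ of each other, obtaining the pole constant $C+R+D$, whereas your direct argument yields the slightly sharper $C+R$.
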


\begin{proof} The if part is immediate.
Suppose that $X$ has a quasi-pole in the compact $\Omega$ and define $D=\diam \Omega < \infty$.
Given any pair of points $v,w\in \Omega$, consider any pair of  geodesic rays $\gamma_v, \gamma_w$ starting at $v$ and $w$, respectively, with $[\gamma_v]=[\gamma_w]$ in $\partial^gX$.
Since $L([vw]) \le D$, one can check that $[vw] \cup \g_w$ is a $(1,2D)$-quasigeodesic (see \cite[Lemma 2.14]{PRT1} for a proof).
By Remark \ref{r:gs}, there is a constant $R$ (which just depend on $D$ and the hyperbolicity constant of $X$) such that $\mathcal{H}(\g_v,[vw] \cup \g_w) \le R$.
Hence, $\mathcal{H}(\g_v, \g_w) \le R+D$.

Fix any $v\in \Omega$.
Since $X$ has a quasi-pole in $\Omega$, there exists some constant $C>0$ such that for every $x\in X$, there is a geodesic ray, $\gamma$ starting at some point $w_x\in \Omega$ and such that $d(x,\gamma)<C$.
Let $\gamma_v$ be the geodesic ray starting at $v$ such that $[\gamma]=[\gamma_v]$. Therefore, $\mathcal{H}(\g_v, \g) \le R+D$ and $d(x,\gamma_v)<C+R+D$. Hence, $v$ is a pole.
\end{proof}

\begin{proposition}\label{Prop: pseudo-regular} Given a metric space $Z$, if the diameters of the connected components of $Z$ have a positive lower bound, then $Z$ is uniformly perfect.
\end{proposition}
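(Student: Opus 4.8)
The plan is to exploit the connectedness of the components through the intermediate value theorem: on a connected set the distance-to-a-point function takes every value in an interval, so points at any prescribed small distance always exist, which is precisely what uniform perfectness demands. Let $c>0$ denote the positive lower bound for the diameters of the connected components of $Z$. Fix an arbitrary $x\in Z$ and let $C$ be the connected component containing $x$. The function $f\colon C\to[0,\infty)$ given by $f(y)=d(x,y)$ is $1$-Lipschitz, hence continuous; since $C$ is connected, $f(C)$ is a connected subset of $[0,\infty)$, i.e.\ an interval, and it contains $f(x)=0$. Writing $R:=\sup_{y\in C}d(x,y)$, it follows that $[0,R)\subseteq f(C)$.

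Next I would bound $R$ from below using the triangle inequality: for all $y,z\in C$ we have $d(y,z)\le d(x,y)+d(x,z)\le 2R$, so $c\le \diam C\le 2R$ and hence $R\ge c/2$. This bound is uniform over all $x\in Z$, since every component has diameter at least $c$. I would then set $\varepsilon_0:=c/4$ and $S:=2$. Given any $x\in Z$ and any $0<\varepsilon\le\varepsilon_0$, we have $\varepsilon\le c/4<c/2\le R$, so $\varepsilon\in[0,R)\subseteq f(C)$ and there exists $y\in C\subseteq Z$ with $d(x,y)=\varepsilon$; consequently $\tfrac{\varepsilon}{S}<\varepsilon=d(x,y)\le\varepsilon$. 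This verifies that $Z$ is $2$-uniformly perfect.

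The argument has no real obstacle; the only points requiring a little care are the two structural facts just used. The first is the passage ``$C$ connected $\Rightarrow$ $f(C)$ is an interval $\Rightarrow$ every distance in $[0,R)$ is realized''. The second is the observation that a diameter bound forces the radius $R$ about \emph{any} point of a component to be at least half the diameter, which is what makes $\varepsilon_0$ uniform over all of $Z$. I would also note that the hypothesis automatically rules out singleton components (they would have diameter $0<c$), so every component is genuinely non-degenerate and the estimate $R\ge c/2$ is available at every point.
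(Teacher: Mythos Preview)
Your proof is correct and follows essentially the same approach as the paper's: both arguments use connectedness of the component containing $x$ to show that the distance function $d(x,\cdot)$ attains every value in an interval $[0,R)$ with $R\ge c/2$, so points at distance exactly $\varepsilon$ exist for all small $\varepsilon$, yielding $S$-uniform perfectness for any $S>1$. The paper phrases this via the contrapositive (absence of a point at distance $s$ would give an open partition of the component), while you invoke the intermediate value theorem directly, but these are the same idea.
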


\begin{proof} Suppose that for every connected component $C$ of $Z$, $\diam(C)>r>0$, and let $p\in Z$.
Then, if $C_p$ is the connected component of $Z$ containing $p$, for every $0<s\leq \frac{r}{2}$ there exist some $q_s\in C$ such that $d(p,q)=s$.
Otherwise, $B(p,s)$ and $Z\backslash B(p,s)$ would define an open partition on $C_p$ and the connected component of $Z$ containing $p$ would have diameter at most $r$.

Therefore, $Z$ is $S$-uniformly perfect for every $S>1$ and any $\varepsilon_0\leq \frac{r}{2}$.
\end{proof}

\begin{remark} \label{r:Cao}
Cao's result (Theorem \ref{Theorem: Cao}) is a particular case of Theorems \ref{t:iigraph} and \ref{th: isoperimetric_manifolds}.
By Proposition \ref{Prop: quasi-pole}, the existence of a quasi-pole is equivalent to the existence of a pole.
Finally, by Proposition \ref{Prop: pseudo-regular}, the condition on the components of the boundary implies that the boundary is uniformly perfect.
\end{remark}

We finish this section with two corollaries, following \cite{Cao}.

Let $X$ be any non-compact hyperbolic complete Riemannian manifold or graph.
Given any continuous function $g$ on $\p_\infty X$, consider the Dirichlet problem at infinity
$$
\left\{
\begin{array}{cc}
    \Delta u = 0 & \text{on }\, X, \\
    \lim_{x\to \xi} u(x)=g(\xi) & \qquad \text{for }\, \xi \in \p_\infty X. \\
  \end{array}
\right.
$$

\begin{corollary} \label{c1}
Let $X$ be a complete Riemannian manifold or graph.
Assume that $X$ is non-compact and hyperbolic, and that it has bounded local geometry and a pole.
Then the Dirichlet problem at infinity is solvable on $X$ and, hence, $X$ admits infinitely many linearly independent bounded non-constant harmonic functions.
\end{corollary}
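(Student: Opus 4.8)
The plan is to pass from the geometry of $X$ to the analysis of harmonic functions through two reductions: first establish the Cheeger inequality $h(X)>0$, and then feed $h(X)>0$ into Ancona's theory of the Dirichlet problem at infinity for Gromov-hyperbolic spaces. This is the scheme of \cite{Cao}, the only difference being that in place of Cao's hypothesis on the diameters of the components of $\p^g X$ I would use the sharper characterizations proved above. The hypotheses put $X$ exactly in the setting of Theorem \ref{th: isoperimetric_manifolds} (the manifold case) and of Theorem \ref{t:iigraph} (the graph case), since $X$ is non-compact, complete, hyperbolic, with bounded local geometry and a pole; by those theorems $h(X)>0$, this positivity being equivalent to the uniform perfectness of $\p_\infty X$ for a visual metric.

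Second, I would invoke the classical implications recalled in the Introduction. The Cheeger inequality $\l_1(X)\ge \frac14\,h(X)^2$ gives $\l_1(X)>0$, so the bottom of the spectrum of the Laplace operator is positive. Since $X$ is Gromov-hyperbolic, has bounded local geometry (bounded geometry in Ancona's sense), and now satisfies $\l_1(X)>0$, the results of Ancona in \cite{A1}, \cite{A2}, \cite{A3} apply and yield that the Dirichlet problem at infinity is solvable on $X$: for every $g\in C(\p_\infty X)$ there is a unique bounded harmonic function $u_g$ on $X$ with $\lim_{x\to\xi}u_g(x)=g(\xi)$ for all $\xi\in\p_\infty X$. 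Here I use that $\p_\infty X$ is compact (Proposition \ref{Prop: compact}) and that the geodesic and sequential boundaries coincide (Proposition \ref{Prop: equiv_boundary}), so that the boundary values are taken in the honest topological sense and the limit in the Dirichlet problem is meaningful.

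Finally, to produce infinitely many linearly independent bounded non-constant harmonic functions I would observe that the solution operator $g\mapsto u_g$ is linear (the Dirichlet problem is linear) and injective: if $u_{g_1}=u_{g_2}$, their boundary limits agree and so $g_1=g_2$. Because $\p_\infty X$ is uniformly perfect it has no isolated points, hence is an infinite (indeed uncountable) compact metric space, so $C(\p_\infty X)$ is infinite-dimensional; applying the injective linear map $g\mapsto u_g$ to a linearly independent family in $C(\p_\infty X)$ then supplies infinitely many linearly independent bounded harmonic functions, and any non-constant $g$ yields a non-constant $u_g$. The main obstacle is the second step: one must verify carefully that the present hypotheses (bounded local geometry, hyperbolicity, a pole, and $\l_1>0$) are exactly those under which Ancona's construction of the harmonic measure and the resulting solution of the Dirichlet problem at infinity go through, and that the pole supplies the visuality needed to identify $\p_\infty X$ with the Martin boundary used there.
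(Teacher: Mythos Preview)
Your scheme is exactly the paper's: invoke Theorems \ref{t:iigraph} and \ref{th: isoperimetric_manifolds} to get $h(X)>0$, use Cheeger's inequality $\l_1(X)\ge\frac14 h(X)^2$ to get $\l_1(X)>0$, and then appeal to Ancona \cite{A2,A3}. The paper's own justification is the single sentence following Corollary~\ref{c2}, and your write-up is a faithful and more detailed version of it, with the added (correct) remark that uniform perfectness makes $\p_\infty X$ perfect and hence $C(\p_\infty X)$ infinite-dimensional.

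There is, however, a genuine gap at the very first step, and it is present in the paper's argument as well. Theorems \ref{t:iigraph} and \ref{th: isoperimetric_manifolds} do \emph{not} assert $h(X)>0$ under the hypotheses listed in the corollary; they assert the \emph{equivalence} $h(X)>0 \Longleftrightarrow \p_\infty X$ is uniformly perfect. The hypotheses ``non-compact, hyperbolic, bounded local geometry, pole'' do not by themselves force either side of this equivalence: $X=\RR$ (or the bi-infinite path graph $\ZZ$) satisfies all of them, yet $h(X)=0$, $\p_\infty X$ consists of two points, and the Dirichlet problem at infinity fails (bounded harmonic functions on $\RR$ are constant). So your sentence ``by those theorems $h(X)>0$'' is not justified by the stated hypotheses. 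The corollary, as written, appears to be missing the hypothesis that $\p_\infty X$ be uniformly perfect (equivalently $h(X)>0$); with that hypothesis added, your argument and the paper's go through verbatim.
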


In order to study the space of normalized minimal positive harmonic functions
on $X$, we need to consider the Martin boundary $X$ (see \cite{AS}).
Using Theorems \ref{t:iigraph} and \ref{th: isoperimetric_manifolds} and the work of Ancona (\cite{A1}, \cite{A2}, \cite{A3}), we can extend a theorem of Anderson and Schoen \cite{AS} (and the work of Kifer \cite{Ki}):

\begin{corollary} \label{c2}
Let $X$ be a complete Riemannian manifold or graph.
Assume that $X$ is non-compact and hyperbolic, and that it has bounded local geometry and a pole.
Then the Martin boundary of $X$ is homeomorphic to $\p_\infty X$.
\end{corollary}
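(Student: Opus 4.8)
The plan is to deduce the statement from Ancona's study of the Martin boundary of Gromov hyperbolic spaces, whose standing hypotheses amount to hyperbolicity, bounded geometry, and positivity of the bottom of the spectrum $\lambda_1(X)$. Thus the entire argument reduces to two moves: producing $\lambda_1(X)>0$ from the isoperimetric results of this paper, and then quoting Ancona's identification theorem, together with the earlier boundary comparisons, to obtain the homeomorphism.

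First I would establish $h(X)>0$. Under the present hypotheses $X$ falls in the scope of Theorem \ref{th: isoperimetric_manifolds} (if $X$ is a Riemannian manifold) or of Theorem \ref{t:iigraph} (if $X$ is a graph): $X$ is non-compact and hyperbolic and has bounded local geometry and a pole. These theorems identify $h(X)>0$ with uniform perfectness of $\partial_\infty X$, so in the regime of interest $h(X)>0$ is exactly the available hypothesis. Granting $h(X)>0$, the Cheeger inequality $\lambda_1(X)\ge \frac14\, h(X)^2$ immediately yields $\lambda_1(X)>0$; in particular $X$ is non-parabolic, the Green function of the Laplace operator exists, and we are placed squarely inside Ancona's framework.

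Next I would invoke the work of Ancona (\cite{A1}, \cite{A2}, \cite{A3}): on a Gromov hyperbolic manifold or graph of bounded geometry for which the relevant operator is coercive (here guaranteed by $\lambda_1(X)>0$), the Green function satisfies the required Harnack-type and boundary-decay estimates, the full and the minimal Martin boundaries coincide, and the Martin compactification agrees with the Gromov compactification. Combined with Proposition \ref{Prop: equiv_boundary}, which identifies the geodesic and sequential boundaries for the proper space $X$, this produces a homeomorphism between the Martin boundary of $X$ and $\partial_\infty X$. This is precisely the announced extension of the theorem of Anderson and Schoen \cite{AS} (and of Kifer \cite{Ki}) from the pinched-negative-curvature setting to the Gromov hyperbolic bounded-geometry setting.

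The main obstacle is not the logical skeleton above but the verification that the present objects satisfy, verbatim, the technical standing assumptions under which Ancona's estimates are proved: translating \emph{bounded local geometry} into the uniform ellipticity and bounded-geometry hypotheses he requires, running the manifold and graph cases in parallel, and confirming that the coercivity furnished by $\lambda_1(X)>0$ is exactly the input his Harnack inequalities at infinity demand. Once these hypotheses are matched, the homeomorphism between the Martin boundary and $\partial_\infty X$ follows directly from his boundary identification, and Corollary \ref{c1} is obtained along the same lines by solving the Dirichlet problem via the resulting harmonic measure.
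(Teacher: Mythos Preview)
Your proposal is correct and matches the paper's approach exactly: the paper's entire proof is the one sentence following the two corollaries, which says they follow from Theorems \ref{t:iigraph} and \ref{th: isoperimetric_manifolds} via the Cheeger inequality $\lambda_1(X)\ge \tfrac14\,h(X)^2$ and then Ancona's results \cite{A2}, \cite{A3}. You have also correctly flagged that $h(X)>0$ (equivalently, uniform perfectness of $\partial_\infty X$) must enter as an input here rather than as a consequence of the stated hypotheses---both your argument and the paper's rely on it, even though the corollary as written does not list it.
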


Corollaries \ref{c1} and \ref{c2} follow directly from Theorems \ref{t:iigraph} and \ref{th: isoperimetric_manifolds} since for $X$ hyperbolic with $\l_1(X) > 0$
(recall that $\l_1(X) \ge \frac14 h(X)^2$) and bounded local geometry, they are special cases of known results in \cite{A2} and \cite{A3}.

Using Theorem \ref{t:iitreefinal}, instead of Theorems \ref{t:iigraph} and \ref{th: isoperimetric_manifolds}, we obtain the following result for trees (with weaker hypotheses).

\begin{corollary} \label{c3}
Let $(T,v)$ be a complemented unbounded rooted tree with $(T_\infty,v)$ pseudo-regular.
Then the Dirichlet problem at infinity is solvable on $T$ and the Martin boundary of $T$ is homeomorphic to $\p_\infty T=end(T,v)$.
\end{corollary}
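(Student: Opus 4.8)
The plan is to deduce Corollary \ref{c3} from the potential-theoretic results of Ancona in the same way as Corollaries \ref{c1} and \ref{c2}, the novelty being that the combinatorial hypotheses on $(T,v)$ produce the needed spectral positivity directly, without any bounded-geometry assumption. First I would apply Theorem \ref{t:iitreefinal}: since $(T_\infty,v)$ is pseudo-regular and $(T,v)$ is complemented, it gives $h(T)>0$ (with the quantitative bound $h(T)\ge \frac{1}{(7K+1)C-1}$ when $(T_\infty,v)$ is $K$-pseudo-regular and $(T,v)$ is $C$-complemented). The Cheeger inequality $\l_1(T)\ge \frac14 h(T)^2$ then yields $\l_1(T)>0$, which is exactly the spectral input required by Ancona's theory.

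Next I would assemble the geometric facts that let $\p_\infty T$ serve as the Gromov boundary in the theorems of \cite{A2} and \cite{A3}. A tree is $0$-hyperbolic, and being locally finite it is proper; being complemented, $(T,v)$ has a pole, so $T$ is visual. By Proposition \ref{Prop: equiv_boundary} the sequential boundary $\p_\infty T$ coincides with the geodesic boundary, which for a tree is the set of geodesic rays from $v$, that is, $end(T,v)$; Proposition \ref{p:end_complete} gives $end(T,v)=end(T_\infty,v)$, and the end metric $d_v$ is a visual metric. This is precisely the identification $\p_\infty T=end(T,v)$ asserted in the statement.

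With $\l_1(T)>0$, $T$ hyperbolic and $\p_\infty T=end(T,v)$ at hand, the solvability of the Dirichlet problem at infinity and the homeomorphism between the Martin boundary of $T$ and $\p_\infty T$ follow from the work of Ancona, together with Anderson--Schoen \cite{AS} and Kifer \cite{Ki}, by the argument already indicated for Corollaries \ref{c1} and \ref{c2}.

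The hard part, and the reason for stating this separately \emph{with weaker hypotheses}, is that Corollaries \ref{c1} and \ref{c2} assume bounded local geometry, whereas a complemented pseudo-regular tree is only locally finite and may have unbounded vertex degrees. What must be checked is that the ingredients of Ancona's theorems --- the uniform boundary Harnack principle and the exponential decay of the Green function controlled by $\l_1(T)>0$ --- survive on a locally finite hyperbolic tree lacking a uniform degree bound. Here I would exploit the elementary structure of trees: any two vertices are joined by a unique geodesic, so the Green function of the nearest-neighbour walk factors multiplicatively along that geodesic, and the desired estimates can be read off directly from this factorization even when $T$ is not uniform.
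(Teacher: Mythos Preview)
Your overall route matches the paper's exactly: the paper's entire argument for Corollary~\ref{c3} is the one sentence ``Using Theorem~\ref{t:iitreefinal}, instead of Theorems~\ref{t:iigraph} and~\ref{th: isoperimetric_manifolds}, we obtain the following result for trees (with weaker hypotheses)'', so the intended logic is precisely $h(T)>0$ via Theorem~\ref{t:iitreefinal}, then $\lambda_1(T)>0$ by Cheeger, then Ancona.

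Where you go further than the paper is in flagging the bounded-local-geometry issue. The paper does not address it at all; it simply asserts that the hypotheses are weaker. Your instinct that something needs to be said here is sound, and your sketch (multiplicative factorization of the Green function along the unique geodesic in a tree) is the natural way to recover Ancona's conclusions without a uniform degree bound. This is genuine added value over the paper's treatment.

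One small gap in your write-up: you assert that $T$ ``being locally finite\dots is proper'', but local finiteness is \emph{not} implied by the stated hypotheses. A vertex of $T_\infty$ could have infinitely many neighbours in $T_\infty$ (pseudo-regularity is only a lower bound on branching), and a vertex of $T_\infty$ could also meet infinitely many of the bounded complementary pieces. So properness, and with it the clean identification $\partial^g T=\partial_\infty T$ via Proposition~\ref{Prop: equiv_boundary}, is not automatic. Either add local finiteness as a standing hypothesis (which is probably what the paper tacitly intends) or argue directly that for trees the end space serves as both boundaries regardless.
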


In \cite{Cao} appear several sufficient conditions in order to guarantee that a Riemannian manifold is hyperbolic.
See also \cite{T} in the case of Riemannian surfaces.

\section{Decompositions and isoperimetric inequalities}

In this chapter we are interested in the relations between the isoperimetric inequality in a graph and its subgraphs.
In this way, we obtain global information from local information.

Given a graph $\G$ and positive constants $R,r,$ we say that a family of subgraphs $\{\G_{\!s} \}_{s\in S}$ of $\G$ is an \emph{$(R,r)$-decomposition} of $\G$ if:

$(1)$ $\cup_{s\in S} \G_{\!s} = \G $,

$(2)$ $\G_{\!s}\cap \G_{\!r}$ is either a (finite or not) set of vertices or the empty set and $\G_{\!s}\setminus \G_{\!r} \neq \emptyset$ for each $s\neq r$,

$(3)$ there is a partition $\{S_1,S_2\}$ of $S$ such that

$\quad (3.1)$ $S_1 \neq \emptyset$ and $h(\G_{\!s}) \ge r$ for every $s\in S_1$,

$\quad (3.2)$ if $V_s=\G_{\!s}\cap \big( \cup_{t\in S_1} \G_t \big)$, $W_s=\cup_{v\in V_s} \bar{B}_{\G_{\!s}}(v,R)$ and $\{\G_{\!s,j}\}_{j\in J_s}$ are the connected components of the subgraph of $\G_{\!s}$ induced by
$V(\G_{\!s}) \setminus W_s$,
then $V_s \neq \emptyset$ and $h(\G_{\!s,j}) \ge r$ for every $s\in S_2$ and every $j \in J_{s}$, where $\bar{B}_{\G_{\!s}}$ denotes the closed ball in $\G_{\!s}$.

\begin{theorem} \label{t:Tdec1}
If a $\mu$-uniform graph $\G$ has an $(R,r)$-decomposition, then
$$
h(\G)
\ge \frac{r^2( \mu -1)}{ (\mu^{R+1}-1 )( \mu + r)^2+ 2r \mu( \mu-1)}\,.
$$
\end{theorem}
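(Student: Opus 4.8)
The plan is to produce a function $f$ on $V(\G)$ satisfying the hypotheses of Proposition \ref{p:sufficient}, namely bounded gradient and a uniformly positive Laplacian, since this immediately yields a positive lower bound for $h(\G)$. The key difficulty is that $\G$ is merely glued together from pieces, some of which satisfy an isoperimetric inequality directly (the $s\in S_1$ pieces) and some only after removing a tubular neighborhood of the ``good'' region (the $s\in S_2$ pieces). The function $f$ must therefore be built in a way that records how deep each vertex sits inside a bad piece relative to the good skeleton, while remaining globally consistent across the overlaps. Concretely, I would first fix, on each $\G_{\!s}$ with $s\in S_1$, a function witnessing $h(\G_{\!s})\ge r$ via the local Green-formula machinery, and then extend it outward across the components $\G_{\!s,j}$ of the bad pieces.

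\textbf{The candidate function.} First I would exploit the characterization implicit in Proposition \ref{p:sufficient}: on a graph with $h\ge r$ one can, at least after a quasi-isometric reduction, find a function with unit-size gradient and Laplacian bounded below by a constant comparable to $r/\mu$. So on the union $\cup_{t\in S_1}\G_t$ I would take $f$ to be (a normalization of) the distance-type function supplied by the isoperimetric inequality on those pieces. Then on each bad piece $\G_{\!s}$ with $s\in S_2$, the relevant vertices split into the collar $W_s=\cup_{v\in V_s}\bar B_{\G_{\!s}}(v,R)$ and the outer components $\G_{\!s,j}$. Inside the collar, which has radius at most $R$, the gradient of $f$ is controlled simply because the collar has bounded diameter and $\G$ is $\mu$-uniform, so only the factor $\mu^{R+1}-1$ (the size of a ball of radius $R$) can enter. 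On each outer component $\G_{\!s,j}$, I would again use $h(\G_{\!s,j})\ge r$ to push $f$ outward with positive Laplacian, anchoring it to its value on the boundary of the collar.

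\textbf{Matching the constant.} The precise constant
$$
h(\G)\ge \frac{r^2(\mu-1)}{(\mu^{R+1}-1)(\mu+r)^2+2r\mu(\mu-1)}
$$
is exactly what one expects to fall out of Proposition \ref{p:sufficient} once $c_1$ and $c_2$ are tracked. I anticipate $c_1$ collecting the worst-case gradient jump across the collar, where the factor $\mu^{R+1}-1$ measures ball growth and $(\mu+r)$ arises from normalizing the two regimes $S_1$ and $S_2$ so that $f$ glues continuously; meanwhile $c_2$ should be of order $r^2(\mu-1)/\big(\mu(\mu+r)\cdots\big)$, reflecting the compounded lower bound $r$ enforced on both the $S_1$ pieces and the $S_2$ sub-pieces. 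The additive term $2r\mu(\mu-1)$ in the denominator is the signature of the collar's contribution to the total boundary count, and I would obtain it by bounding $|\partial A|$ over the collar vertices separately from the bulk.

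\textbf{Main obstacle.} The hard part will be the gluing: ensuring the locally-defined functions on the $S_1$-pieces, on the collars $W_s$, and on the outer components $\G_{\!s,j}$ combine into a single $f$ on $V(\G)$ whose gradient stays bounded by a fixed $c_1$ and whose Laplacian stays bounded below by a fixed $c_2>0$ \emph{at every vertex}, including the overlap vertices where $\G_{\!s}\cap\G_{\!t}$ meets. Since the overlaps are only sets of vertices (condition (2)), one must verify that summing the neighbor-differences across a shared vertex does not destroy the positivity of $(\Delta f)(x)$; this is where the growth factor $\mu^{R+1}-1$ and the scaling $(\mu+r)^2$ genuinely interact, and where the bookkeeping is heaviest. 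Once the global $f$ is in hand with the tracked constants $c_1,c_2$, Proposition \ref{p:sufficient} gives $h(\G)\ge c_2/(\mu c_1)$, and the stated bound follows by substitution.
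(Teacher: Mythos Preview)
Your proposal rests on a step that is not available: you assume that from $h(\G_{\!s})\ge r$ (for $s\in S_1$) and $h(\G_{\!s,j})\ge r$ (for $s\in S_2$) one can extract, on each piece, a function with bounded gradient and \emph{pointwise} Laplacian bounded below by a positive constant. That is the \emph{converse} of Proposition~\ref{p:sufficient}, and the paper neither states nor proves it; in fact no such converse is known in this generality. Proposition~\ref{p:sufficient} is a one-way implication (existence of $f$ $\Rightarrow$ $h>0$), and the arguments in the paper that \emph{do} produce such functions (Lemma~\ref{l1:caract_complete}, the proof of Theorem~\ref{t:iigraph}) rely on very specific structure---a tree rooted at $v$, or the level function on a hyperbolic approximation---not on an abstract isoperimetric bound. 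Without this converse, your construction of $f$ on the $S_1$-pieces and on the outer components $\G_{\!s,j}$ never gets started, and the subsequent gluing discussion is moot. Even granting the local functions, the gluing you describe is a genuinely overdetermined problem: the values on $V_s$ are dictated by the $S_1$-side, the values on $\partial W_s$ must simultaneously match the collar extension and serve as anchors for each $\G_{\!s,j}$, and there is no mechanism ensuring these constraints are compatible while preserving $\Delta f\ge c_2$ at the seam vertices.

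The paper's proof takes a completely different route and avoids Proposition~\ref{p:sufficient} altogether. It is a direct counting argument on an arbitrary finite $A\subset V(\G)$: one sets $A_s=A\cap\G_{\!s}$, uses $h(\G_{\!s})\ge r$ to bound $\sum_{s\in S_1}|A_s|$ by a multiple of $|\partial A|$, and then for $s\in S_2$ splits $A_s$ into the part inside the collar $W_s$ and the part in the components $\G_{\!s,j}$. Collar vertices are handled by walking along a geodesic of length $\le R$ back to $V_s$ (hitting either $\partial A$ or a vertex already counted in some $A_t$ with $t\in S_1$), which is where the ball-size factor $(\mu^{R+1}-1)/(\mu-1)$ enters; vertices outside the collar are bounded via $h(\G_{\!s,j})\ge r$, with their boundaries landing either in $\partial A$ or back in the collar. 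Summing these estimates yields the stated constant. No global function $f$ is ever constructed.
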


\begin{proof}
Let $\{\G_{\!s} \}_{s\in S}$ be an $(R,r)$-decomposition of $\G$.
Fix any non-empty finite subset $A$ of vertices in $\Gamma$, and let us define $A_s:=A \cap \G_s$ for each $s\in S$.
Since $A$ is finite and $\G$ is a uniform graph, the sets
$S_1^A:=\{s\in S_1\,|\; A_s\neq \emptyset\}$ and $S_2^A:=\{s\in S_2\,|\; A_s\neq \emptyset\}$ are finite.

Denote by $\p' \!A$ the subset of $\p A$ given by $\p' \!A = \p A \setminus \cup_{s\in S_2} ( \G_{\!s}\setminus W_s)= \p A \setminus \cup_{s\in S_2} \cup_{j\in J_s} \G_{\!s,j}$.

If $D$ is a subset of $V(\G_s)$, let us denote by $\p_{\,\G_{\!s}} D$ the boundary of $D$ in the graph $\G_{\!s}$.
One can check that $\p_{\,\G_{\!s}} A_s \subseteq \G_{\!s} \cap \p A$.
Note that it is possible to have $\p_{\,\G_{\!s}} A_s \neq \G_{\!s} \cap \p A$ (if $\G_{\!s}$ is a finite graph for some $s\in S_2$ and
$A=\p \,V(\G_{\!s})\neq \emptyset$,
then $A_s=\emptyset$, $\p_{\,\G_{\!s}} A_s=\emptyset$ and $\G_{\!s} \cap \p A \neq \emptyset$).
We have that $\p_{\,\G_{\!s}} A_s$ is a non-empty set for each $s \in S_1^A$, since $h(\G_{\!s}) \ge r$ gives that $\G_{\!s}$ is an infinite graph.
For every $s \in S_1^A$, the inequality
$|A_s| \le h(\G_{\!s})^{-1}|\p_{\,\G_{\!s}} A_s| \le r^{-1}|\p_{\,\G_{\!s}} A_s|$
holds (this inequality trivially holds for every $s \in S \setminus (S_1^A \cup S_2^A)$).
Since $\cup_{s\in S_1} \, \p_{\,\G_{\!s}} A_s \subseteq \p' A$
and $\G$ is a $\mu$-uniform graph, a vertex belongs at most to $\mu$ subgraphs in $\{\G_{\!s} \}_{s\in S}$,
$\sum_{s\in S_1}|\p_{\,\G_{\!s}} A_s| \le \mu\,|\p' \!A|$ and
$\sum_{s\in S_1}| A_s| \le r^{-1}\sum_{s\in S_1}|\p_{\,\G_{\!s}} A_s| \le \mu \, r^{-1}|\p' \!A|$.

Let us bound now $| \cup_{s\in S_2} A_s|$.

Assume first that $A_s \subseteq W_s$ for each $s\in S_2$.
Then, for every $v\in A_s$ with $s\in S_2$, there exists $v'\in V_s$ such that $d_{\, \G_{\!s}}(v,v')\leq R$.
Let $\g_v$ be a geodesic in $\G_s$ joining $v$ and $v'$.
We have either that there exists a vertex $v^* \in \g_v\cap \p' \!A$ or $\g_v\cap V(\G_{\!s})\subseteq A$,
in this last case we define $v^* =v'$.
Hence, $d_\G(v,v^*) \le R$ and $v^* \in W_s$ in any case.
Now fix $v\in A_s$ with $s\in S_2$.
Note that if $w^*=v^*$ for some vertex $w\in A_t$ with $t\in S_2$, then $d_{\,\G}(w,v^*) \le R$.
Since $\G$ is $\mu$-uniform, we have that
$$
\big| \{w\in \cup_{s\in S_2} A_s\,|\; w^*=v^* \} \big|
\le \sum_{j=0}^{R} \mu^j
= \frac{\mu^{R+1}-1}{\mu-1}\,.
$$

If we define $C:=\cup_{s\in S_2}\{v\in A_s\,|\; v^*\in \p A \}=\cup_{s\in S_2}\{v\in A_s\,|\; v^*\in \p' \!A \}$ and $D:=\cup_{s\in S_2} A_s\setminus C$, then
$$
\begin{aligned}
| C|
& \le \frac{\mu^{R+1}-1}{\mu-1} |\p' \!A|,
\\
| D|
& \le \frac{\mu^{R+1}-1}{\mu-1} \big| \{v'\,|\; v\in D \} \big|
\le \frac{\mu^{R+1}-1}{\mu-1} \big| \cup_{s\in S_1} A_s \big|
\\
& \le \frac{\mu^{R+1}-1}{\mu-1} \sum_{s\in S_1} | A_s |
\le \frac{\mu^{R+1}-1}{\mu-1} \mu \,r^{-1}|\p' \!A|,
\\
| A|
& \le \sum_{s\in S_1} | A_s | + \big| \cup_{s\in S_2} A_s \big|
= \sum_{s\in S_1} | A_s | + |C| + |D|
\\
& \le \mu \,r^{-1}|\p' \!A| + \frac{\mu^{R+1}-1}{\mu-1} |\p' \!A|
+ \frac{\mu^{R+1}-1}{\mu-1} \mu \,r^{-1}|\p' \!A|
\\
& = \frac{\big(\mu^{R+1}-1 \big) \big( \mu +r \big) + \mu \big( \mu -1\big)}{r\big( \mu -1 \big)} \, |\p' \!A|,
\end{aligned}
$$
and this gives the desired inequality.

Consider now the general case and define
$$
A^*= A \setminus \cup_{s\in S_2} ( \G_{\!s}\setminus W_s),
\quad
A_s^{*}=A^{*} \cap \G_{\!s},
\quad
A^{**}=A \setminus A^*,
\quad
A_{s,j}^{**}=A^{**} \cap \G_{\!s,j}.
$$
We have proved
$$
| A^*|
\le  \frac{\big(\mu^{R+1}-1 \big) \big( \mu +r \big) + \mu \big( \mu -1\big)}{r\big( \mu -1 \big)}\, |\p' \!A^*|,
\qquad
\big| \cup_{s\in S_2} A_s^* \big|
\le \frac{\mu^{R+1}-1}{\mu-1} \big(\mu \,r^{-1}+1  \big)|\p' \!A^*|.
$$
Since $\p' \!A^* \subseteq \p A$, we deduce
$$
| A^*|
\le \frac{\big(\mu^{R+1}-1 \big) \big( \mu +r \big) + \mu \big( \mu -1\big)}{r\big( \mu -1 \big)}\, |\p A|,
\qquad
\big| \cup_{s\in S_2} A_s^* \big|
\le \frac{\mu^{R+1}-1}{\mu-1} \big(\mu \,r^{-1}+1  \big)|\p A|.
$$

If $v\in \p_{\,\G_{\!s,j}} A_{s,j}^{**} \setminus \p  A$, then $v\in A_s^{*}$.
Therefore,
$$
\begin{aligned}
| A^{**}|
& = \sum_{s\in S_2} \sum_{j\in J_s} | A_{s,j}^{**}|
\le \sum_{s\in S_2} \sum_{j\in J_s} r^{-1} |\p_{\,\G_{\!s,j}} A_{s,j}^{**}|
\\
& \le r^{-1} \Big( \sum_{s\in S_2} \sum_{j\in J_s} |\p A \cap \G_{\!s,j}| + \sum_{s\in S_2} \sum_{j\in J_s} |\p_{\,\G_{\!s,j}} A_{s,j}^{**}\setminus \p A | \Big)
\\
& \le r^{-1} \big( \mu \,|\p A | + \mu\,| \cup_{s\in S_2} A_s^{*} | \big)
\\
& \le  r^{-1}\Big( \mu + \mu\, \frac{\mu^{R+1}-1}{\mu-1}  \big(\mu \,r^{-1}+1  \big)\Big)|\p A|,
\end{aligned}
$$
and
$$
\begin{aligned}
| A|
& = | A^{*} | + | A^{**} |
\\
& \le \Big( \, \frac{ r(\mu^{R+1}-1 )( \mu + r)+ r\mu ( \mu-1)}{r^2( \mu -1)}+ \frac{ r \mu( \mu-1) + \mu (\mu^{R+1}-1 )( \mu + r)}{r^2( \mu -1)}\Big)|\p A|
\\
& = \frac{ (\mu^{R+1}-1 )( \mu + r)^2+ 2r \mu( \mu-1)}{r^2( \mu -1)}\,|\p A|,
\end{aligned}
$$
and this finishes the proof.
\end{proof}

We say that the $(R,r)$-decomposition $\{\G_{\!s} \}_{s\in S}$ of $\G$ is a \emph{strong $(R,r)$-decomposition}
if $V(W_s) = V(\G_{\!s})$ for every $s\in S_2$.

The first part of the argument in the proof of Theorem \ref{t:Tdec1} has the following consequence.

\begin{corollary} \label{c:Tdec1}
If a $\mu$-uniform graph $\G$ has a strong $(R,r)$-decomposition, then
$$
h(\G) \ge \frac{ r ( \mu -1 ) }{(\mu^{R+1}-1 ) ( \mu +r ) + \mu ( \mu -1)}\,.
$$
\end{corollary}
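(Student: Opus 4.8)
The plan is to observe that for a strong $(R,r)$-decomposition the second, more delicate half of the proof of Theorem~\ref{t:Tdec1} becomes vacuous, so that the first half alone yields the bound claimed here.

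First I would unwind the hypothesis $V(W_s)=V(\G_{\!s})$ for every $s\in S_2$: it forces $V(\G_{\!s})\setminus W_s=\emptyset$, so for such $s$ the subgraph of $\G_{\!s}$ induced by $V(\G_{\!s})\setminus W_s$ is empty and there are no components $\G_{\!s,j}$ (i.e.\ $J_s=\emptyset$). Consequently, for every nonempty finite $A\subset V(\G)$ and every $s\in S_2$ the set $A_s:=A\cap \G_{\!s}$ automatically satisfies $A_s\subseteq V(\G_{\!s})=W_s$. This is precisely the standing assumption ``$A_s\subseteq W_s$ for each $s\in S_2$'' under which the first case of the proof of Theorem~\ref{t:Tdec1} operates; in the notation of that proof, $A^{**}=\emptyset$ and $A=A^{*}$, so the general-case splitting $A=A^{*}\cup A^{**}$ is never needed.

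Next I would simply read off the conclusion of that first case. Its computation is unchanged here: one bounds $\sum_{s\in S_1}|A_s|$ using $h(\G_{\!s})\ge r$ on $S_1$, the inclusion $\bigcup_{s\in S_1}\p_{\,\G_{\!s}}A_s\subseteq \p' \!A$, and the fact that $\mu$-uniformity forces each vertex into at most $\mu$ subgraphs; one then bounds $\bigl|\bigcup_{s\in S_2}A_s\bigr|$ through the auxiliary sets $C$ and $D$, using that the anchor $v^{*}$ of each vertex lies within distance $R$ and that $\mu$-uniformity caps the number of vertices sharing a given $v^{*}$ by $(\mu^{R+1}-1)/(\mu-1)$. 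The upshot, reproduced verbatim, is
$$
|A|\le \frac{(\mu^{R+1}-1)(\mu+r)+\mu(\mu-1)}{r(\mu-1)}\,|\p' \!A|.
$$

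Finally, since $\p' \!A=\p A\setminus \bigcup_{s\in S_2}(\G_{\!s}\setminus W_s)\subseteq \p A$, we have $|\p' \!A|\le |\p A|$, so that
$$
\frac{|\p A|}{|A|}\ge \frac{r(\mu-1)}{(\mu^{R+1}-1)(\mu+r)+\mu(\mu-1)}
$$
for every nonempty finite $A$; taking the infimum gives the stated lower bound for $h(\G)$. I expect no real obstacle: the whole content is the observation in the first step that strongness collapses $V(\G_{\!s})\setminus W_s$ to the empty set and thereby removes the need for the $A^{**}$ term, after which the corollary is just the first half of the proof of Theorem~\ref{t:Tdec1} combined with the trivial inclusion $\p' \!A\subseteq \p A$.
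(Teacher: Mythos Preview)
Your proposal is correct and matches the paper's own justification, which simply notes that the first part of the argument in the proof of Theorem~\ref{t:Tdec1} yields the corollary. One small sharpening: in the strong case $\G_{\!s}\setminus W_s=\emptyset$ for all $s\in S_2$, so in fact $\p' \!A=\p A$ (not merely $\p' \!A\subseteq \p A$), though the inclusion already suffices.
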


This last result can be used in order to show that the isoperimetric inequality in graphs does not imply hyperbolicity.
We just need two additional lemmas.
We say that a subgraph $\G$ of $G$ is \emph{isometric} if $d_{\G}(u,v)=d_{G}(u,v)$ for every $u,v\in V(\G)$.
Isometric subgraphs are very important in the study of hyperbolic graphs, as the following result shows.

\begin{lemma}\cite[Lemma 5]{RSVV}
\label{l:subgraph}
If $\G_0$ is an isometric subgraph of $\G$, then $\d(\G) \ge \d(\G_0)$.
\end{lemma}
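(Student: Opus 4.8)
The plan is to reduce everything to the observation that the hyperbolicity constant $\d(X)$, as defined in the Introduction, is a supremum of an expression depending only on the mutual distances of the four points $x,y,z,o$ involved. Since in that definition the points are allowed to range over the whole geodesic space, the one thing that needs checking is that the isometry assumed only between \emph{vertices} of $\G_0$ propagates to an honest isometry of geodesic subspaces, i.e. that $d_{\G}(p,q) = d_{\G_0}(p,q)$ for \emph{all} points $p,q$ of $\G_0$, not merely its vertices.

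First I would establish this extension. Any point $p$ in the interior of an edge $e=uv$ of $\G_0$ is an interior point of the same edge of $\G$, and in a graph viewed as a $1$-complex distinct edges meet only at vertices; hence every path leaving $p$ must pass through $u$ or $v$, in $\G_0$ and in $\G$ alike. Consequently, for $p,q$ interior to edges $uv$ and $u'v'$ of $\G_0$,
\[
d_\G(p,q) = \min_{a\in\{u,v\},\, b\in\{u',v'\}} \big( \ell(p,a) + d_\G(a,b) + \ell(b,q) \big),
\]
where $\ell$ denotes length along the relevant edge, and the analogous formula holds in $\G_0$. Because $d_\G(a,b)=d_{\G_0}(a,b)$ for all vertices $a,b$ by the isometric hypothesis, and the length terms coincide, the two minima agree; the cases where $p$ or $q$ is itself a vertex, or where $p,q$ lie on a common edge, are handled in the same way (and more simply). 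This yields $d_\G=d_{\G_0}$ on all of $\G_0$.

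With this in hand the comparison is immediate. For any four points $x,y,z,o\in \G_0$ each Gromov product $(x|z)_o$, $(z|y)_o$, $(x|y)_o$ is a combination of pairwise distances among points of $\G_0$, so its value computed in $\G_0$ equals its value computed in $\G$; hence the quantity $\min\{(x|z)_o,(z|y)_o\}-(x|y)_o$ is the same whether evaluated in $\G_0$ or in $\G$. Taking the supremum over all quadruples of $\G_0$, which form a subfamily of the quadruples of $\G$, gives $\d(\G_0)\le \d(\G)$.

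The main (and essentially only) obstacle is the first step: being careful that \emph{isometric subgraph}, a condition phrased only for vertices, in fact upgrades to an isometric embedding of the full geodesic spaces, so that points interior to edges are legitimate inputs to the supremum defining $\d$. Once the edge-interior points are controlled, the comparison of hyperbolicity constants is a one-line monotonicity argument.
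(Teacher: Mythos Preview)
The paper does not supply a proof of this lemma; it is quoted from \cite{RSVV} without argument. Your proof is correct and self-contained: the only point requiring care is exactly the one you isolate, namely that the isometric-subgraph hypothesis, which in the paper is stated for vertices, extends to all points of $\G_0$ (edge interiors included), after which the inequality $\d(\G_0)\le\d(\G)$ is immediate from the Gromov-product definition of $\d$ used in the Introduction and monotonicity of the supremum under enlarging the domain. Your edge-interior argument is fine; for the same-edge case it may be worth saying explicitly that $d(p,q)$ is the minimum of the along-the-edge distance and the ``through a vertex'' distance, and that both alternatives coincide in $\G$ and $\G_0$, so the minimum does as well.
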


\begin{lemma} \label{l:exa1}
For each uniform graphs $\G,G,$ with $h(G)>0$, there exists a uniform graph $\tilde{\G}$ such that $\G$ is an isometric subgraph of $\tilde{\G}$, $h( \tilde{\G} )>0$ and $\d( \tilde{\G} ) \ge \d(\G)$.
\end{lemma}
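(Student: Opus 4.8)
The plan is to build $\tilde{\G}$ by attaching to \emph{every} vertex of $\G$ a private copy of the non-amenable graph $G$, so that the amenable ``core'' $\G$ gets absorbed isoperimetrically while it remains isometrically embedded. Fix a base point $o\in V(G)$. For each $v\in V(\G)$ take a copy $G_v$ of $G$, all copies pairwise disjoint and disjoint from $\G$, and form $\tilde{\G}$ from the disjoint union $\G\sqcup\bigsqcup_{v\in V(\G)}G_v$ by identifying, for each $v$, the base point of $G_v$ with the vertex $v$. Thus $V(G_v)\cap V(\G)=\{v\}$, the sets $V(G_v)\setminus\{v\}$ are pairwise disjoint, and each $v$ is a cut vertex separating $G_v$ from the rest of $\tilde{\G}$.

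First I would dispatch the three easy properties. If $\G$ is $\mu_1$-uniform and $G$ is $\mu_2$-uniform, then in $\tilde{\G}$ a vertex $v\in V(\G)$ has degree $|N_\G(v)|+|N_G(o)|\le\mu_1+\mu_2$, while every other vertex keeps its degree inside a single copy $G_v$; hence $\tilde{\G}$ is $(\mu_1+\mu_2)$-uniform. Next, $\G$ is an isometric subgraph of $\tilde{\G}$: any path joining two vertices of $\G$ that enters a copy $G_v$ must both enter and leave through the single cut vertex $v$, so excursions into the copies never shorten a path and $d_{\tilde{\G}}=d_{\G}$ on $V(\G)$. Consequently $\d(\tilde{\G})\ge\d(\G)$ is immediate from Lemma \ref{l:subgraph}.

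The crux is $h(\tilde{\G})>0$, and here I would invoke Corollary \ref{c:Tdec1}. Put $r:=h(G)>0$ and consider the family $\{\G\}\cup\{G_v\}_{v\in V(\G)}$, with $S_1=\{G_v\}_{v\in V(\G)}$ and $S_2=\{\G\}$. Since each $h(G_v)=h(G)=r$, condition $(3.1)$ holds, and condition $(2)$ follows from the disjointness of the copies together with $V(G_v)\cap V(\G)=\{v\}$. For the single index $s$ corresponding to $\G\in S_2$, the set $V_s=\G\cap\bigcup_v G_v$ equals all of $V(\G)$, because every $v\in V(\G)$ lies in $G_v$. Taking $R=1$ we get $W_s=\bigcup_{v\in V_s}\bar{B}_{\G}(v,1)=V(\G)=V(\G_s)$, so $V(\G_s)\setminus W_s=\emptyset$: condition $(3.2)$ holds vacuously (there are no components $\G_{s,j}$), $V_s\neq\emptyset$, and moreover $V(W_s)=V(\G_s)$, so the decomposition is \emph{strong}. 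Applying Corollary \ref{c:Tdec1} to the $\mu$-uniform graph $\tilde{\G}$ with $\mu=\mu_1+\mu_2$ and $R=1$ gives $h(\tilde{\G})\ge \frac{r}{(\mu+1)(\mu+r)+\mu}>0$.

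The step I expect to demand the most care is precisely verifying that $\{\G\}\cup\{G_v\}_v$ is a strong decomposition: the whole point is that the possibly amenable core $\G$ contributes no genuine $S_2$-piece with bad isoperimetric behaviour, since each of its vertices carries an entire non-amenable copy of $G$ and is therefore swallowed by $W_s$. If one prefers to avoid the decomposition machinery, the same conclusion comes from a direct count: for finite $A\subseteq V(\tilde{\G})$ set $B_v=A\cap(V(G_v)\setminus\{v\})$ and $\hat{A}_v=B_v\cup(\{v\}\cap A)$. Because the $V(G_v)$ are pairwise disjoint and each cut vertex $v$ lies in only its own copy, the boundaries $\p_{G_v}\hat{A}_v$ are pairwise disjoint subsets of $\p A$; hence $|\p A|\ge\sum_v|\p_{G_v}\hat{A}_v|\ge h(G)\sum_v|\hat{A}_v|=h(G)\,|A|$, giving the sharper bound $h(\tilde{\G})\ge h(G)>0$.
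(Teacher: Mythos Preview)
Your construction and argument are essentially identical to the paper's: attach a copy of $G$ at each vertex of $\G$, then apply Corollary~\ref{c:Tdec1} with $S_1=\{G_v\}_v$ and $S_2=\{\G\}$. The only differences are that the paper takes $R=0$ (since already $V_s=V(\G)$, no neighbourhood is needed), and it does not include your alternative direct count yielding the sharper bound $h(\tilde{\G})\ge h(G)$.
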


\begin{proof}
Let $\{G_k\}_{k\in \NN}$ be a sequence of graphs isometric to $G$ with $v_k \in V(G_k)$ for each $k \in \NN$.
If $V(\G)=\{w_k\}_{k\in \NN}$, then let $\tilde{\G}$ be the graph obtained from $\G$ and $\{G_k\}_{k\in \NN}$ by identifying $v_k$ with $w_k$ for each $k\in \NN$.
Since $V(\tilde{\G})=\cup_{k\in \NN} V(G_k)$, we have that $\{\{G_k\}_{k\in \NN},\{\G\}\}$ is a strong $(0,h(G))$-decomposition of $\tilde{\G}$.
Therefore, Corollary \ref{c:Tdec1} gives that $h(\tilde{\G})>0$.
If $\G,G$ are $\mu$-uniform, then $\tilde{\G}$ is $2\mu$-uniform.
One can check that $\G$ is an isometric subgraph of $\tilde{\G}$, and Lemma \ref{l:subgraph} gives $\d( \tilde{\G} ) \ge \d(\G)$.
\end{proof}

By taking any non-hyperbolic uniform graph $\G$, Lemma \ref{l:exa1} gives the following.

\begin{example} \label{exa1}
There exist non-hyperbolic uniform graphs with positive isoperimetric constant.
\end{example}

Given a subgraph $\G_0$ of the graph $\G$,
the \emph{interior Cheeger isoperimetric constant} of $\Gamma_0$ is defined to be
\[h^i(\Gamma_0) = \inf_A \frac{|\partial_{\,\G_0} A|}{|A|},\]
where $A$ ranges over all non-empty finite subsets of vertices in $\Gamma_0$ with $d(A,\p \G_0) \ge 2$, and
$\partial_{\,\G_0} A$ denotes the boundary of $A$ in the subgraph $\G_0$.
It is clear that $h^i(\Gamma_0) \ge h(\Gamma_0)$.
We also have the following result.

\begin{lemma} \label{l:i}
If $\G$ is any graph, then
$$
h(\Gamma) = \min \big\{ h^i(\Gamma_0)\,|\; \G_0 \,\text{ is a subgraph of }\,\G\, \big\}.
$$
\end{lemma}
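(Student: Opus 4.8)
The plan is to prove the two inequalities $\min\{h^i(\Gamma_0)\}\le h(\Gamma)$ and $\min\{h^i(\Gamma_0)\}\ge h(\Gamma)$ separately, and to notice along the way that the infimum on the right-hand side is attained, so that it really is a minimum.

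For the inequality $\le$, I would simply take $\Gamma_0=\Gamma$. Then there is no vertex of $\Gamma$ lying outside $\Gamma_0$, so $\partial\Gamma=\emptyset$; the side condition $d(A,\partial\Gamma)\ge 2$ is therefore vacuous, and $A$ ranges over exactly the same finite nonempty subsets of $V(\Gamma)$ as in the definition of $h(\Gamma)$. Since moreover $\partial_\Gamma A=\partial A$, the definitions give $h^i(\Gamma)=h(\Gamma)$. In particular $h(\Gamma)$ is one of the values $h^i(\Gamma_0)$, so $\min\{h^i(\Gamma_0)\}\le h(\Gamma)$.

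For the inequality $\ge$, I would show that $h^i(\Gamma_0)\ge h(\Gamma)$ for every subgraph $\Gamma_0$. Fix an admissible set $A\subseteq V(\Gamma_0)$, that is, $A$ finite, nonempty and with $d(A,\partial\Gamma_0)\ge 2$, and compare the boundary $\partial_{\Gamma_0}A$ taken in $\Gamma_0$ with the boundary $\partial A$ taken in $\Gamma$. The key claim is the inclusion $\partial A\subseteq\partial_{\Gamma_0}A$. Indeed, if $v\in\partial A$ there is $a\in A$ with $va\in E(\Gamma)$; if $v$ were not a vertex of $\Gamma_0$, then $v\in\partial\Gamma_0$ and hence $d(a,\partial\Gamma_0)\le d(a,v)=1$, contradicting admissibility. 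Thus $v\in V(\Gamma_0)$, the edge $va$ joins two vertices of $\Gamma_0$ and so survives in $\Gamma_0$, whence $d_{\Gamma_0}(v,A)=1$ and $v\in\partial_{\Gamma_0}A$. With this inclusion in hand, $|\partial_{\Gamma_0}A|\ge|\partial A|\ge h(\Gamma)\,|A|$ for every admissible $A$, and taking the infimum gives $h^i(\Gamma_0)\ge h(\Gamma)$.

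Putting the two steps together, every value $h^i(\Gamma_0)$ is at least $h(\Gamma)$ while the value $h(\Gamma)=h^i(\Gamma)$ is actually attained; hence the infimum is a minimum and equals $h(\Gamma)$. The main obstacle is the inclusion $\partial A\subseteq\partial_{\Gamma_0}A$ in the $\ge$ direction: it is exactly here that the distance-$2$ condition in the definition of $h^i$ does its work, ensuring that no $\Gamma$-neighbour of $A$ can escape from $\Gamma_0$, so that passing from the ambient boundary to the interior boundary loses no vertices. One should keep in mind that this argument also uses that an edge of $\Gamma$ between two vertices of $\Gamma_0$ is an edge of $\Gamma_0$, i.e. that the relevant edge structure of $\Gamma_0$ is inherited from $\Gamma$.
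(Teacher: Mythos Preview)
Your argument is correct and matches the paper's proof almost verbatim: the paper also fixes an admissible $A\subset V(\Gamma_0)$, uses $d(A,\partial\Gamma_0)\ge 2$ to force any $v\in\partial A$ to lie in $\Gamma_0$ (hence in $\partial_{\Gamma_0}A$), deduces $h^i(\Gamma_0)\ge h(\Gamma)$, and then takes $\Gamma_0=\Gamma$ for the reverse inequality. The only cosmetic difference is that the paper records the full equality $\partial_{\Gamma_0}A=\partial A$ (also noting the trivial inclusion $\partial_{\Gamma_0}A\subseteq\partial A$), whereas you correctly observe that only $\partial A\subseteq\partial_{\Gamma_0}A$ is needed; your explicit remark that this step uses that edges of $\Gamma$ between vertices of $\Gamma_0$ are edges of $\Gamma_0$ is a point the paper leaves implicit.
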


\begin{proof}
Fix a subgraph $\G_0$ of $\G$ and a non-empty finite subset of vertices $A$ in $\Gamma_0$ with $d(A,\p \G_0) \ge 2$.
Since $\G_0 \subseteq \G$, we have $\partial_{\,\G_0} A \subseteq \partial A$.
Since $A \subset V(\G_0)$ and $d(A,\p \G_0) \ge 2$, if a vertex belongs to $\partial A$,
then it belongs to $\G_0$ and, consequently, it belongs also to $\partial_{\,\G_0} A$.
Hence, $\partial_{\,\G_0} A = \partial A$ and $h^i(\Gamma_0) \ge h(\Gamma)$.
Finally, if $\Gamma_0=\G$, then $h^i(\Gamma) = h(\Gamma)$ and the equality in the lemma holds.
\end{proof}

Lemma \ref{l:i} has the following consequence, which can be viewed as a kind of converse of Theorem \ref{t:Tdec1}.

\begin{theorem} \label{t:converse}
If $\G$ is a graph with a sequence of subgraphs $\{\G_n\}$ such that $\lim_{n\to\infty} h^i(\Gamma_n) = 0$, then $h(\Gamma)=0$.
\end{theorem}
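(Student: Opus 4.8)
The plan is to deduce the statement directly from Lemma \ref{l:i}. That lemma identifies $h(\Gamma)$ as the minimum of the interior isoperimetric constants $h^i(\Gamma_0)$ taken over all subgraphs $\Gamma_0$ of $\Gamma$; in particular, for every subgraph $\Gamma_0$ one has the inequality $h(\Gamma) \le h^i(\Gamma_0)$. The key point underlying this inequality, already recorded in the proof of Lemma \ref{l:i}, is that whenever $A \subseteq V(\Gamma_0)$ satisfies $d(A,\p \Gamma_0) \ge 2$, the boundary $\p_{\,\Gamma_0} A$ computed inside $\Gamma_0$ coincides exactly with the boundary $\p A$ computed in $\Gamma$, so each admissible ratio $|\p_{\,\Gamma_0} A|/|A|$ is bounded below by $h(\Gamma)$, and hence so is the infimum $h^i(\Gamma_0)$.

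Applying this to each member of the given sequence, I would fix $n$ and invoke the inequality $h(\Gamma) \le h^i(\Gamma_n)$. Since $h(\Gamma) \ge 0$ by definition and $\lim_{n\to\infty} h^i(\Gamma_n) = 0$ by hypothesis, letting $n \to \infty$ in the two-sided estimate $0 \le h(\Gamma) \le h^i(\Gamma_n)$ squeezes $h(\Gamma)$ to $0$, which is precisely the assertion of the theorem.

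There is essentially no obstacle at this stage: once Lemma \ref{l:i} is in hand, the result is an immediate corollary, exactly as the paper signals by announcing it as a consequence of that lemma. All of the genuine work---verifying that the interior boundary and the ambient boundary agree under the separation condition $d(A,\p\Gamma_0)\ge 2$, so that the interior constant of any subgraph can never drop below $h(\Gamma)$---is carried out in the proof of Lemma \ref{l:i}. The present statement merely repackages the resulting lower bound $h(\Gamma)\le h^i(\Gamma_0)$ as a vanishing criterion: if the interior constants of \emph{some} sequence of subgraphs tend to zero, then they force $h(\Gamma)=0$. This is the sense in which the theorem is a partial converse to Theorem \ref{t:Tdec1}, where a decomposition into pieces with uniformly positive isoperimetric constants was used to bound $h(\Gamma)$ away from zero.
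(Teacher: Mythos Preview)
Your proposal is correct and matches the paper's approach exactly: the paper does not even give a separate proof, merely stating that the theorem is a consequence of Lemma~\ref{l:i}, which is precisely the squeeze argument $0 \le h(\Gamma) \le h^i(\Gamma_n) \to 0$ that you spell out.
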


\end{document}